\newcommand{\R}{\mathds R}
\newcommand{\Z}{\mathds Z}
\newcommand{\N}{\mathds N}
\newcommand{\llangle}{\langle\!\!\langle}
\newcommand{\rrangle}{\rangle\!\!\rangle}
\newcommand{\Ddt}{\tfrac{\mathrm D}{\mathrm dt}}
\newcommand{\ddt}{\tfrac{\mathrm d}{\mathrm dt}}
\newcommand{\Dim}{\mathrm{dim}}
\newcommand{\Ker}{\mathrm{Ker}}
\renewcommand{\contentsline}[3]{\csname new#1\endcsname{#2}{#3}}
\newcommand{\newchapter}[2]{\bigskip\hbox to \hsize{\vbox{\advance\hsize by -.5cm\baselineskip=12pt\parfillskip=0pt\leftskip=2cm\noindent\hskip -2cm #1\leaders\hbox{.}\hfil\hfil\par}$\,$#2\hfil}}
\newcommand{\newsection}[2]{\medskip\hbox to \hsize{\vbox{\advance\hsize by -.5cm\baselineskip=12pt\parfillskip=0pt\leftskip=2.5cm\noindent\hskip -2cm #1\leaders\hbox{.}\hfil\hfil\par}$\,$#2\hfil}}
\newcommand{\newsubsection}[2]{\medskip\hbox to \hsize{\vbox{\advance\hsize by -.5cm\baselineskip=12pt\parfillskip=0pt\leftskip=3.5cm\noindent\hskip -2cm #1\leaders\hbox{.}\hfil\hfil\par}$\,$#2\hfil}}
\numberwithin{equation}{section}
\title[Genericity of nondegenerate critical points]{Genericity of nondegenerate critical points \\
and Morse geodesic functionals}
\author[L. Biliotti]{Leonardo Biliotti}
\address{Dipartimento di Matematica\hfill\break\indent Universit\`a
di Parma\hfill\break\indent Viale G. Usberti 53/A\hfill\break\indent 43100 Parma, Italy}
\email{leonardo.biliotti@unipr.it}
\author[M. A. Javaloyes]{Miguel Angel Javaloyes}
\address{Departamento de Geometr\'{\i}a y Topolog\'{\i}a.\hfill\break\indent
 Facultad de Ciencias, Universidad de Granada.\hfill\break\indent
 Campus Fuentenueva s/n, 18071 Granada, Spain}
 \email{ma.javaloyes@gmail.es}
\thanks{The second author was partially supported by
Regional J. Andaluc\'{\i}a Grant P06-FQM-01951 and by Spanish MEC Grant MTM2007-64504.
The third author is sponsored by Capes, Brasil, Grant BEX 1509-08-0.}
\author[P.\ Piccione]{Paolo Piccione}
\address{Departamento de Matem\'atica,\hfill\break\indent
Universidade de S\~ao Paulo, \hfill\break\indent Rua do Mat\~ao
1010,\hfill\break\indent CEP 05508-900, S\~ao Paulo, SP, Brazil}
\curraddr{Department of Mathematics, \hfill\break\indent
University of Murcia, Campus de Espinardo\hfill\break\indent
30100 Espinardo, Murcia, \hfill\break\indent Spain}
\email{piccione.p@gmail.com}
\subjclass[2000]{57R45, 57R70, 57N75, 58E10}
\date{November 27th, 2008}
\begin{document}


\theoremstyle{plain}\newtheorem*{teon}{Theorem}
\theoremstyle{definition}\newtheorem*{defin*}{Definition}
\theoremstyle{plain}\newtheorem{teo}{Theorem}[section]
\theoremstyle{plain}\newtheorem{prop}[teo]{Proposition}
\theoremstyle{plain}\newtheorem{lem}[teo]{Lemma}
\theoremstyle{plain}\newtheorem*{lem-n}{Lemma}
\theoremstyle{plain}\newtheorem{cor}[teo]{Corollary}
\theoremstyle{definition}\newtheorem{defin}[teo]{Definition}
\theoremstyle{remark}\newtheorem{rem}[teo]{Remark}
\theoremstyle{plain} \newtheorem{assum}[teo]{Assumption}
\swapnumbers
\theoremstyle{definition}\newtheorem{example}{Example}


\begin{abstract}
We consider a family of variational problems on a Hilbert manifold parameterized by
an open subset of a Banach manifold, and we discuss the genericity of the nondegeneracy
condition for the critical points. Using classical techniques,
we prove an abstract genericity result that employs the infinite dimensional Sard--Smale
theorem, along the lines of an analogous result of B. White \cite{Whi}.
Applications are given by proving the genericity of
metrics without degenerate geodesics between fixed endpoints in general (non compact)
semi-Riemannian manifolds, in orthogonally split semi-Riemannian manifolds and in
globally hyperbolic Lorentzian manifolds. We discuss the genericity property also in
stationary Lorentzian manifolds.
\end{abstract}

\maketitle
\tableofcontents

\begin{section}{Introduction}
Generic properties of flows, especially of Riemannian geodesic flows, are a classical topic
in the theory of dynamical systems and in calculus of variations, with important contributions
by many authors. A well known result of the area is the so-called \emph{bumpy metric theorem}, originally
formulated by Abraham \cite{Abr}, and proved in detail by Anosov \cite{Ano}, which states that
the bumpy Riemannian metrics over a given compact manifold form a generic set. Recall that a metric is
bumpy when all its closed geodesics are nondegenerate.
Recently, B. White \cite{Whi} has proven a nice formulation of the bumpy metric theorem in the context of minimal
immersions; more precisely, given a compact manifold $M$ and a complete Riemannian manifold $(N,h)$, with $\Dim(M)<\Dim(N)$,
then
the Riemannian metrics $h$ on $N$ such that every minimal \emph{embedding} $\phi:(M,g)\to(N,h)$
is nondegenerate form a generic set. In the case $M=S^1$, White's theorem does not reproduce exactly the
bumpy metric theorem, in that the result does not guarantee that iterates of a given closed geodesic,
which are not embeddings, are also nondegenerate. A key point in the proof of this result, which has a variational
nature, is that the Jacobi differential operator arising from the second variational formula of the
area functional is a self-adjoint Fredholm operator.
Inspired by White's result, the goal of the present paper is to initiate a study of
generic properties of geodesics in \emph{semi-Riemannian} manifolds, i.e., in manifolds endowed with a non positive definite nondegenerate
metric tensor. At the present stage, this is a totally unexplored field.
Motivations for the interest in such kind of dynamical systems are obviously related
to Lorentzian geometry and General Relativity, to which this paper is ultimately devoted, but also
to Morse theory, as explained below, and to the general theory of semi-Riemannian manifolds.
As a starting point for our theory, we
consider the case of \emph{fixed endpoints} geodesics in semi-Riemannian manifolds. We set ourself the
task of determining whether the set of semi-Riemannian metrics on a fixed manifold $M$ that:
\begin{itemize}
\item have fixed index;
\item belong to some specific class, such as orthogonally split, globally hyperbolic,  or are conformal to some given
metric;
\item make any  two arbitrarily fixed distinct points
non conjugate along any geodesic,
\end{itemize}
is generic. One should observe that in the non positive definite case, the Jacobi differential operator
is not self-adjoint, or even normal, but the index form (i.e., the second variation of the geodesic
action functional) along a given geodesic is represented by a self-adjoint Fredholm operator.
Recall that, given $p,q\in M$, the nonconjugacy property above relatively to some semi-Riemannian metric $g$
on $M$ is equivalent to the fact that the $g$-geodesic action functional
$\Omega_{p,q}\ni\gamma\mapsto\frac12\int_0^1g(\dot\gamma,\dot\gamma)\,\mathrm dt\in\R$,
defined on the Hilbert manifold $\Omega_{p,q}$ of all curves of Sobolev class $H^1$ in $M$
joining $p$ and $q$, is a \emph{Morse function}. Standard Morse theory does not apply to
the semi-Riemannian geodesic action functional, due to the fact that in the non positive
definite case all its critical points have infinite Morse index. Recent developments of
Morse theory, mostly due to the work of Abbondandolo and
Majer (see \cite{AbbMej2, AbbMej}) have shown that, under suitable assumptions, one
can construct a doubly infinite chain complex (Morse--Witten complex) out of the critical
points of a strongly indefinite Morse functional, using the dynamics of the gradient flow.
The Morse relations for the critical points are obtained by computing the homology of this complex,
which in the standard Morse theory is  isomorphic to the singular homology of the base manifolds.
Such computation is one of the central and highly non trivial issues of the theory.
Remarkably, Abbondandolo and
Majer have also shown that this homology is stable by ``small'' perturbations, so that in several
concrete examples one can reduce its computation to a simpler case. This occurs for instance
in the case of the  geodesic action functional in a globally hyperbolic Lorentzian manifold,
in which case the homology of the Morse--Witten complex is stable by small $C^0$ perturbations
of the metric. Thus, it becomes a relevant issue to discuss under which circumstances a given
metric tensor can be perturbed in a given class in such a way that the nondegenericity property
for its geodesics between two prescribed points is preserved. This problem is the original
motivation for the results developed in this paper; we  basically give an affirmative
answer to the genericity questions posed above, with three remarkable exceptions that will be discussed
below.

The idea for proving the genericity of the nondegeneracy property for the critical points of
a family of functionals, which follows a standard transversality approach (see the classical
reference \cite[Chapter 4]{AbrRob}, or the more recent \cite[Section~2.11]{AbboMaj3}),
is the following. Assume that one is given a Hilbert manifold $Y$, and
a family of functionals $f_x:Y\to\R$ parameterized by
points $x$ in an open subset $\mathcal A$ of a Banach space $X$. In the geodesic case, $Y$
is the Hilbert manifold $\Omega_{p,q}(M)$ of curves between two fixed points
in a manifold $M$, $X$ is the space of $(0,2)$-symmetric tensors on $M$,
and $\mathcal A$ is the open set of nondegenerate tensors having a fixed index.
Then, one consider the set of pairs \[\mathfrak M=\big\{(x,y):\text{$y$ is a critical point of $f_x$}\big\},\]
which under suitable assumptions has the following important properties:
\begin{itemize}
\item $\mathfrak M$ is an embedded submanifold of the product $X\times Y$;
\item the projection $\Pi:\mathfrak M\to X$ onto the first factor is a smooth nonlinear Fredholm map of index $0$;
\item the critical values of $\Pi$ are precisely the set of $x\in\mathcal A$ such that $f_x$ has some degenerate critical
point in $Y$.
\end{itemize}
Thus, the genericity of nondegenerate critical points is reduced to the question of regular values
of a Fredholm map, to which Sard--Smale theorem gives a complete answer. In order to make this setup working,
one needs some regularity and Fredholmness assumptions, plus a certain transversality assumption
that in the geodesic case reduces to the existence of some special tensors on the underlying manifold.
\smallskip

There are three cases in which the genericity property of nondegenerate geodesics either fails,
or cannot be proven with the techniques of this paper. First,  perturbations
in a given conformal class are insufficient to eliminate degeneracies of \emph{lightlike} geodesics.
In fact, every conformal perturbation of a semi-Riemannian metric preserves lightlike pregeodesics
and their conjugate points, so that nondegeneracy is \emph{not} generic in a given conformal class.
The second, and more intriguing, point that deserves further attention is the case
of geodesics with the same initial and endpoint and, more specifically, the case
of periodic geodesics.  Note that in the case of periodic geodesics, the notion of nondegeneracy
has to be modified, due to the fact that in the periodic case the tangent field to a geodesic
is always in the kernel of the index form. Every periodic geodesic produces a countable
number of distinct critical points of the action functional by iteration. In order to
develop Morse theory, one clearly needs to have nondegeneracy of all this iterates, which
amounts to saying that the linearized Poincar\'e map along the given geodesic should not
have any (complex) roots of unity in its spectrum. Due to some technical reasons,
the metric perturbations studied in this paper fail to produce the desired result
in the case of a $1$-periodic geodesic $\gamma$
some of whose iterates $\gamma^k$ admits a nontrivial periodic Jacobi field $J$ satisfying
$\sum_{j=1}^kJ_{t+j}=0$ for all $t$. Examples of this situation can be constructed easily, for instance
by considering periodic geodesics on a flat M\"obius strip. Roughly speaking, the field $V_t=\sum_{j=1}^kJ_{t+j}$
indicates in which direction the metric should be \emph{stretched} in order to destroy the
degeneracy produced by the Jacobi field $J$.
Due to this problem, all our genericity results use the (probably unnecessary)
assumption that the endpoints should be distinct.
It is curious to observe that, also under this assumption, one does not avoid having to
deal with \emph{portions} of periodic geodesics (see Lemma~\ref{thm:selfintersections}), but this
case is treated with a little ``parity'' trick.
We conjecture that most of the results of this paper should hold also in the case of periodic
geodesic (in the Riemannian case this is established in \cite{Abr} and \cite{Ano}), but
the proof should be based on dynamical arguments, rather than variational.
The third situation where the transversality condition is not satisfied, and thus the genericity
of metrics with nondegenerate geodesics cannot be deduced by the theory in the present
paper, is the case of stationary Lorentzian manifolds. We will show with an explicit example that, in
the class of stationary metrics on a manifold $M$ having a prescribed vector field
$Y\in\mathfrak X(M)$ as timelike Killing vector field, the transversality condition fails
to hold along a degenerate geodesic which is an integral line of $Y$.

\smallskip

We will now give a detailed technical description of the material discussed in the paper,
with a few additional remarks.
In Section~\ref{sec:preliminaries} we fix notations and discuss a few
preliminary results involving the functional analytical setup and the geometrical setup of the paper.
In the functional analytical part we determine a criterion for the surjectivity (Lemma~\ref{thm:lemkercompl})
and a criterion for existence of a closed complement
to the kernel (Proposition~\ref{thm:lemsurj}) of the direct sum of two bounded linear operators.
These are used to determine transversality to the zero section of a cotangent bundle
for the partial derivative of a map defined
on product spaces (Proposition~\ref{thm:transversality}).
The main result of the geometrical part is Lemma~\ref{thm:constrloc-strong}, that gives the existence
of (global) sections of a vector bundle endowed with a connection, whose value and covariant derivative
have been prescribed along a small immersed curve in the base.
In Section~\ref{sec:abstrgenresult} we consider the abstract setup of a family of smooth functionals
over a Hilbert manifold, parameterized by points of (an open subset of) a Banach manifold.
The central result, Corollary~\ref{thm:genericnondegeneracy}, uses a certain transversality assumption
(see formula~\eqref{eq:transvcond}) to characterize the Morse functionals in the family as regular
values of a nonlinear Fredholm map, yielding the desired genericity result via Sard--Smale theorem.
The main idea and the proof of Corollary~\ref{thm:genericnondegeneracy} follow closely B. White's
arguments in the abstract setup of \cite[Section~1]{Whi}.
In Section~\ref{sec:morsegeofunctionals} we apply Corollary~\ref{thm:genericnondegeneracy} to
the fixed endpoint geodesic problem in several contexts. We will first consider (Subsections~\ref{sub:metrics} and \ref{sub:semiRiemgeneric})
the case of general semi-Riemannian metrics on an arbitrarily fixed manifold, possibly non compact.
When dealing with a non compact manifold $M$, there is no canonical Banach space structure on the space
of tensors on $M$, and in particular there is no way of describing semi-Riemannian metric tensors
as an open subset of a Banach space. Note that Sard--Smale theorem uses a Banach space structure in
an essential way. One way to induce a Banach space norm in the space of tensors would be to use
an auxiliary complete Riemannian metric $g_{\mathrm R}$ on $M$, and then considering tensors of class
$C^k$ on $M$ whose first $k$ (covariant) derivatives have bounded $g_{\mathrm R}$-norm (see Example~\ref{exa:exaWtBstf}).
However, a more general genericity statement is obtained by considering the notion of \emph{$C^k$-Whitney type}
Banach space of tensors on $M$, which is introduced in Subsection~\ref{sub:metrics}.
A Banach space of tensors $\mathcal E$ is said to be of $C^k$-Whitney type if it contains all tensors of class $C^k$ with compact support
(these are used in all our genericity results), and if its topology is finer than the weak $C^k$-Whitney topology,
i.e., if convergence in $\mathcal E$ implies $C^k$-convergence on compacta.
$C^k$-Whitney type Banach spaces of tensors seem to provide a sufficiently general and adequate
environment in which one can prove genericity results based on Sard--Smale theorem, including a
large variety of situations where one poses asymptotic conditions on the metric tensors.
An argument by Taubes, pointed out to the authors by the referee, allows to extend
all the genericity results presented in this paper to the more elegant context of
the topology of $C^\infty$-convergence on compact subsets. In Section~\ref{sec:smooth}
we will discuss the details of the argument.

In Subsection~\ref{sub:conformal} we study the genericity property of metrics in a given conformal
class. As mentioned above, we restrict ourselves to the case of nondegeneracy of nonlightlike geodesics between fixed
endpoints. In subsection~\ref{sub:splitting} we consider product manifolds $M=M_1\times M_2$, endowed with
metric tensors that make the two factors orthogonal, and we prove a genericity result in this context.
In Subsection~\ref{sub:globhyper} we consider globally hyperbolic Lorentzian metric tensors; by a celebrated
result of Geroch (\cite{Ger}), recently improved by Bernal and S\'anchez (\cite{BerSan2, BerSan3}), these metrics form a subclass
of the family of orthogonally split metric tensors in product manifolds $M_1\times\R$.
Finally, in Section~\ref{sec:stationary}, we will exhibit a counterexample to the transversality condition
in the stationary Lorentzian case.
\end{section}

\begin{section}{Notations and Preliminaries}
\label{sec:preliminaries}
\subsection{Functional analytical preliminaries}
Let $H$ be a Hilbert space with inner product $\langle\cdot,\cdot\rangle$;
given a closed subspace $W\subset H$, we will denote by $P_W:H\to W$ the
orthogonal projection onto $W$.
\begin{lem}\label{thm:lemkercompl}
Let $V$ be a Banach space, $H$ a Hilbert space, $L_1:V\to H$ and $L_2:H\to H$ be bounded linear operators,
with $\mathrm{Im}(L_2)$ closed; set $L=L_1\oplus L_2:V\oplus H\to H$, $L(v,h)=L_1(v)+L_2(h)$, $v\in V$, $h\in H$.
Then, $L$ is surjective if and only if $P_{\mathrm{Im}(L_2)^\perp}\big(\mathrm{Im}(L_1)\big)=\mathrm{Im}(L_2)^\perp$.
If in addition $L_2$ is self-adjoint and $P_{\Ker(L_2)}(\mathrm{Im}(L_1))$ is closed in $\Ker(L_2)$
(this is the case, for instance, if $\Ker(L_2)$ is finite dimensional, i.e., if $L_2$ is Fredholm),
then $L$ is surjective if and only if for all $h\in\Ker(L_2)\setminus\{0\}$ there exists $v\in V$ such that $\langle L_1(v),h\rangle\ne0$.
\end{lem}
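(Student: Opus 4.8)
The plan is to work throughout with the orthogonal splitting of $H$ determined by the closed subspace $\mathrm{Im}(L_2)$, reducing everything to elementary facts about subspaces of a Hilbert space; in the self-adjoint part I will additionally use the identification $\mathrm{Im}(L_2)^\perp=\Ker(L_2)$.

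For the first equivalence, I would start from the identity $\mathrm{Im}(L)=\mathrm{Im}(L_1)+\mathrm{Im}(L_2)$, which is immediate from $L(v,h)=L_1(v)+L_2(h)$. Since $\mathrm{Im}(L_2)$ is closed, $H=\mathrm{Im}(L_2)\oplus\mathrm{Im}(L_2)^\perp$, and the assertion reduces to the following elementary claim, to be proved directly: for a linear subspace $A\subseteq H$ one has $A+\mathrm{Im}(L_2)=H$ if and only if $P_{\mathrm{Im}(L_2)^\perp}(A)=\mathrm{Im}(L_2)^\perp$. In one direction, given $w\in\mathrm{Im}(L_2)^\perp$, write $w=a+\ell$ with $a\in A$ and $\ell\in\mathrm{Im}(L_2)$ and apply $P_{\mathrm{Im}(L_2)^\perp}$; in the other, given $x\in H$ with $\mathrm{Im}(L_2)^\perp$-component $x'$, choose $a\in A$ with $P_{\mathrm{Im}(L_2)^\perp}(a)=x'$, so that $x-a\in\mathrm{Im}(L_2)$ and hence $x\in A+\mathrm{Im}(L_2)$. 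Applying this with $A=\mathrm{Im}(L_1)$ gives the first statement.

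For the second equivalence, I would invoke $L_2=L_2^*$ together with the closedness of $\mathrm{Im}(L_2)$ to get $\mathrm{Im}(L_2)^\perp=\Ker(L_2^*)=\Ker(L_2)=:N$, a closed subspace; by the first part, $L$ is surjective if and only if $P_N(\mathrm{Im}(L_1))=N$. Writing $A:=P_N(\mathrm{Im}(L_1))$, which by hypothesis is closed in $N$, the orthogonal decomposition $N=A\oplus A^{\perp_N}$ (complement taken inside $N$) shows $A=N$ if and only if $A^{\perp_N}=\{0\}$. Finally, since $\langle L_1(v),h\rangle=\langle P_N(L_1(v)),h\rangle$ for every $h\in N$ and $v\in V$ (because $L_1(v)-P_N(L_1(v))\perp N$), a vector $h\in N$ is orthogonal to $\mathrm{Im}(L_1)$ precisely when $h\in A^{\perp_N}$; hence the negation of the stated condition — existence of $h\in N\setminus\{0\}$ with $\langle L_1(v),h\rangle=0$ for all $v$ — is exactly $A^{\perp_N}\ne\{0\}$, and chaining the equivalences $L$ surjective $\iff P_N(\mathrm{Im}(L_1))=N\iff A^{\perp_N}=\{0\}\iff$ the stated condition finishes the proof. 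I would dispose of the parenthetical remarks by noting that a finite-dimensional $\Ker(L_2)$ makes every subspace of it closed, so the closedness hypothesis on $P_{\Ker(L_2)}(\mathrm{Im}(L_1))$ is then automatic, and that Fredholmness of $L_2$ provides both closed image and finite-dimensional kernel for free.

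The step that deserves the most care — and the only place where the hypotheses are used in full strength — is the passage from ``$A$ has trivial orthogonal complement in $N$'' to ``$A=N$'': this is precisely where the closedness of $P_{\Ker(L_2)}(\mathrm{Im}(L_1))$ in $\Ker(L_2)$ is essential, since a dense proper subspace would otherwise break the equivalence in the infinite-dimensional case. Everything else is routine bookkeeping with orthogonal projections.
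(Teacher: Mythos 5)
Your proof is correct and follows essentially the same route as the paper: reduce surjectivity of $L$ to the condition $P_{\mathrm{Im}(L_2)^\perp}(\mathrm{Im}(L_1))=\mathrm{Im}(L_2)^\perp$, use $\mathrm{Im}(L_2)^\perp=\Ker(L_2)$ from self-adjointness, and exploit the closedness of $P_{\Ker(L_2)}(\mathrm{Im}(L_1))$ so that a trivial orthogonal complement inside $\Ker(L_2)$ forces equality. The paper simply states the first equivalence as immediate and compresses the second into one line; you have filled in those routine details with the same underlying argument.
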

\begin{proof}
The first statement is immediate.  If $L_2$ is self-adjoint, then $\mathrm{Im}(L_2)^\perp=\Ker(L_2)$.
Since $P_{\Ker(L_2)}(\mathrm{Im}(L_1))$ is closed,
$P_{\Ker(L_2)}\circ L_1:V\to\Ker(L_2)$ is not surjective
if and only if there exists $h\in\Ker(L_2)$ such that $\langle P_{\Ker(L_2)}(L_1(v)),h\rangle=\langle L_1(v),h\rangle=0$ for
all $v\in V$. The conclusion follows.
\end{proof}

Let us recall that a closed subspace $W$ of a Banach space $V$ is said to be \emph{complemented}
if there exists a closed subspace $W'\subset V$ such that $V=W\oplus W'$; such a space $W'$ will
be called a \emph{complement} of $W$ in $V$.
\begin{lem}\label{thm:lemmascemo}
Let $L:U\to V$ be a linear map between vector spaces, and let $S\subset V$ be a finite codimensional space.
Then, $L^{-1}(S)$ is finite codimensional in $U$, and $\mathrm{codim}_{U}\big(L^{-1}(S)\big)=
\mathrm{codim}_V(S)-\mathrm{codim}_V\big(\mathrm{Im}(L)+S\big)$.
\end{lem}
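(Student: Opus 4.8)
The plan is to reduce everything to a counting argument in the finite-dimensional quotient $V/S$. Let $\pi\colon V\to V/S$ denote the canonical projection; by hypothesis $\dim(V/S)=\mathrm{codim}_V(S)<\infty$. First I would note that the kernel of the composition $\pi\circ L\colon U\to V/S$ is precisely $L^{-1}(S)$, since $\pi\big(L(u)\big)=0$ if and only if $L(u)\in S$. Consequently $\pi\circ L$ factors through an injective linear map $U/L^{-1}(S)\hookrightarrow V/S$, and this already shows both that $L^{-1}(S)$ has finite codimension in $U$ and that
\[
\mathrm{codim}_{U}\big(L^{-1}(S)\big)=\dim\big(\mathrm{Im}(\pi\circ L)\big).
\]

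Next I would identify $\mathrm{Im}(\pi\circ L)$ explicitly: since $\pi$ is surjective onto $V/S$, one has $\mathrm{Im}(\pi\circ L)=\pi\big(\mathrm{Im}(L)\big)=\big(\mathrm{Im}(L)+S\big)/S$. Hence $\mathrm{codim}_{U}\big(L^{-1}(S)\big)=\dim\big((\mathrm{Im}(L)+S)/S\big)$. Finally, the third isomorphism theorem gives a linear isomorphism $V/\big(\mathrm{Im}(L)+S\big)\cong (V/S)\big/\big((\mathrm{Im}(L)+S)/S\big)$, so that, comparing dimensions of these finite-dimensional spaces,
\[
\mathrm{codim}_V\big(\mathrm{Im}(L)+S\big)=\mathrm{codim}_V(S)-\dim\big((\mathrm{Im}(L)+S)/S\big).
\]
Combining this with the previous identity yields $\mathrm{codim}_{U}\big(L^{-1}(S)\big)=\mathrm{codim}_V(S)-\mathrm{codim}_V\big(\mathrm{Im}(L)+S\big)$, as claimed.

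I do not expect any real obstacle; the statement is purely algebraic, so no topology or continuity of $L$ is involved. The only point worth keeping in mind is that every space appearing in the final count — namely $V/S$, the subspace $(\mathrm{Im}(L)+S)/S$ of it, and the quotient $V/(\mathrm{Im}(L)+S)$ — is finite-dimensional (being a subspace or quotient of $V/S$), which is exactly what legitimizes the subtraction of codimensions in the formula.
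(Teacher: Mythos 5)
Your argument is correct and follows essentially the same route as the paper: both consider the quotient map $\pi\colon V\to V/S$, observe that $\ker(\pi\circ L)=L^{-1}(S)$ so that $\pi\circ L$ induces an injection $U/L^{-1}(S)\hookrightarrow V/S$, and then count dimensions in the finite-dimensional space $V/S$. The only cosmetic difference is that you invoke the third isomorphism theorem explicitly, where the paper compresses the same dimension count into a single displayed chain of equalities.
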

\begin{proof}
If $\pi:V\to V/S$ is the projection onto the quotient, the linear map $\pi\circ L:U\to V/S$ has kernel $L^{-1}(S)$.
Hence, $\pi\circ L$ defines an injective linear map on the quotient $U/L^{-1}(S)\to V/S$, and so:
\begin{multline*}\mathrm{codim}_V(S)=\mathrm{dim}\big(V/S\big)=\mathrm{dim}\big(U/L^{-1}(S)\big)+
\mathrm{codim}_{V/S}\big(\mathrm{Im}(\pi
\circ L)\big)\\=\mathrm{codim}_U\big(L^{-1}(S)\big)+\mathrm{codim}_V\big(\mathrm{Im}(L)+S\big).\qedhere
\end{multline*}
\end{proof}
\begin{prop}\label{thm:lemsurj}
Let $U$, $V$, $W$ be Banach spaces, $L_1:U\to W$, $L_2:V\to W$ be bounded linear operators, and
assume that $\Ker(L_2)$ is complemented in $V$ (this is the case, for instance, if $V$ is a Hilbert space,
or if $L_2$ is Fredholm) and that $\mathrm{Im}(L_2)$ is finite codimensional in $W$.
Set $L=L_1\oplus L_2:U\oplus V\to W$; then, $\Ker(L)$ is complemented in $U\oplus V$.
\end{prop}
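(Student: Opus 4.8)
The plan is to realize $\Ker(L)$ as (a copy of) $\Ker(L_2)$ plus the graph of a bounded operator, and then to patch the picture up over a finite--codimensional piece of $U$. Throughout I will use that a closed subspace of finite dimension or of finite codimension is complemented, and that the graph of a bounded operator $g\colon A\to B$ between Banach spaces is complemented in $A\oplus B$ (with $\{0\}\oplus B$ as complement, the associated projection $(a,b)\mapsto(a,g(a))$ being bounded).

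\emph{Step 1 (reduction).} Since $\Ker(L_2)$ is complemented, I fix a closed complement $V'$, so that $V=\Ker(L_2)\oplus V'$ and $L_2|_{V'}$ is injective with $\mathrm{Im}(L_2|_{V'})=\mathrm{Im}(L_2)$. The splitting of $V$ induces a Banach space isomorphism $U\oplus V\cong(U\oplus V')\oplus\Ker(L_2)$ under which $\Ker(L)$ corresponds to $\Ker(\hat L)\oplus\Ker(L_2)$, where $\hat L:=L_1\oplus L_2|_{V'}\colon U\oplus V'\to W$. Hence it suffices to produce a closed complement $C_0$ of $\Ker(\hat L)$ in $U\oplus V'$: then $C_0\oplus\{0\}$ is a closed complement of $\Ker(\hat L)\oplus\Ker(L_2)$ in $(U\oplus V')\oplus\Ker(L_2)$.

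\emph{Step 2 ($\mathrm{Im}(L_2)$ is closed, and $\Ker(\hat L)$ is a graph).} First I note that $\mathrm{Im}(L_2)$ is automatically closed: a finite--codimensional range of a bounded operator between Banach spaces is closed (apply the open mapping theorem to the bounded surjection $V'\oplus Z\to W$, $(v',z)\mapsto L_2v'+z$, for a finite--dimensional algebraic complement $Z$ of $\mathrm{Im}(L_2)$, and observe that $V'\oplus\{0\}$ is saturated by the kernel of this map because $Z\cap\mathrm{Im}(L_2)=\{0\}$). Consequently $L_2|_{V'}\colon V'\to\mathrm{Im}(L_2)$ is a continuous bijection of Banach spaces, so by the open mapping theorem it has a bounded inverse $R\colon\mathrm{Im}(L_2)\to V'$. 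Set $U_1:=L_1^{-1}\big(\mathrm{Im}(L_2)\big)$; it is closed in $U$ (preimage of a closed set under the bounded map $L_1$) and of finite codimension in $U$ by Lemma~\ref{thm:lemmascemo}. For $(u,v')\in U\oplus V'$ one checks at once that $\hat L(u,v')=0$ if and only if $u\in U_1$ and $v'=-R(L_1u)$; thus $\Ker(\hat L)$ is precisely the graph of the bounded operator $U_1\ni u\mapsto -RL_1u\in V'$, and is therefore complemented in $U_1\oplus V'$.

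\emph{Step 3 (conclusion).} Write $U=U_1\oplus U_0$ with $U_0$ finite--dimensional, which is possible since $U_1$ is closed of finite codimension in $U$. Then $U\oplus V'=(U_1\oplus V')\oplus U_0$, so if $C_0$ is a complement of $\Ker(\hat L)$ in $U_1\oplus V'$, then $C_0\oplus U_0$ is a closed complement of $\Ker(\hat L)$ in $U\oplus V'$. By Step~1 this shows $\Ker(L)$ is complemented in $U\oplus V$. The only point in this argument that is not pure bookkeeping with complemented subspaces is the closedness of $\mathrm{Im}(L_2)$ in Step~2, which is exactly what makes $R$ bounded and hence the graph description of $\Ker(\hat L)$ usable; once that is in hand, the rest is routine.
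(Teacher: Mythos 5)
Your proof is correct, and it takes a genuinely different route from the paper's. The paper's argument is a direct, essentially algebraic construction of the complement: it first decomposes $\mathrm{Im}(L_1)=\big[\mathrm{Im}(L_1)\cap\mathrm{Im}(L_2)\big]\oplus Z$ with $Z$ finite dimensional, then picks a finite dimensional $U'$ with $L_1^{-1}(Z)=\Ker(L_1)\oplus U'$, and verifies by hand that $U'\oplus V'$ (with $V'$ any closed complement of $\Ker(L_2)$) is a complement of $\Ker(L)$; all the spaces involved are closed for elementary reasons (finite dimensional, or finite codimensional preimages under a bounded map), so no open mapping argument is invoked. Your version, by contrast, is structural: after splitting off $\Ker(L_2)$ you identify the remaining kernel $\Ker(\hat L)$ as the \emph{graph} of the bounded operator $-R L_1$ on the finite-codimensional subspace $U_1=L_1^{-1}\big(\mathrm{Im}(L_2)\big)$, where $R$ is the bounded inverse of $L_2\vert_{V'}$ onto $\mathrm{Im}(L_2)$ supplied by the open mapping theorem; complementedness then drops out of the general fact that graphs of bounded operators are complemented, together with a finite-dimensional tail $U_0$. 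What your approach buys is conceptual clarity (the kernel is ``$\Ker(L_2)$ plus a graph, up to a finite-dimensional correction''), at the modest cost of invoking the open mapping theorem and the automatic closedness of a finite-codimensional bounded image; the paper's approach is slightly more elementary and produces the complement as an explicit product $U'\oplus V'$ without ever manufacturing an inverse for $L_2$. Both are sound, and you correctly supply all the small verifications (that $\Ker(\hat L)$ really is the stated graph, that $U_1$ has finite codimension via Lemma~\ref{thm:lemmascemo}, and that the finite-dimensional piece $U_0$ can be grafted on at the end).
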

\begin{proof}
Consider the (possibly not closed) subspace $\mathrm{Im}(L_1)\subset W$;  $\mathrm{Im}(L_1)\cap\mathrm{Im}(L_2)$
has finite codimension in $\mathrm{Im}(L_1)$. Namely, if $\pi:W\to W/\mathrm{Im}(L_2)$ is the quotient map,
then the restriction $\pi\vert_{\mathrm{Im}(L_1)}:\mathrm{Im}(L_1)\to W/\mathrm{Im}(L_2)$ has kernel
$\mathrm{Im}(L_1)\cap\mathrm{Im}(L_2)$. Thus, one has an injective linear map from $\mathrm{Im}(L_1)/\big[\mathrm{Im}(L_1)
\cap\mathrm{Im}(L_2)\big]$ to the finite dimensional space $W/\mathrm{Im}(L_2)$, which proves our claim.
Set $\mathrm{Im}(L_1)=\big[\mathrm{Im}(L_1)\cap\mathrm{Im}(L_2)\big]\oplus Z$, with $Z\subset W$ a (closed)
finite dimensional subspace. We now claim that $\Ker(L_1)$ has finite codimension in $L_1^{-1}(Z)$; namely,
one has an injective linear map from $L_1^{-1}(Z)/\Ker(L_1)$ to $Z$. Set $L_1^{-1}(Z)=\Ker(L_1)\oplus U'$,
with $U'$ a (closed) finite dimensional subspace of $U$. Finally, let $V'$ be a complement of
$\Ker(L_2)$ in $V$; we will now show that $U'\oplus V'$ is a complement of $\Ker(L)$ in $U\oplus V$.
Assume $(x,y)\in U'\oplus V'$ with $L_1(x)+L_2(y)=0$; since $U'\subset L_1^{-1}(Z)$, then $L_1(x)\in Z$.
But $L_1(x)=-L_2(y)\in\mathrm{Im}(L_2)$, thus $L_1(x)\in Z\cap\big(\mathrm{Im}(L_1)\cap\mathrm{Im}(L_2)\big)=\{0\}$,
i.e., $L_1(x)=L_2(y)=0$. Thus, $x\in U'\cap\Ker(L_1)=\{0\}$ and $y\in V'\cap\Ker(L_2)=\{0\}$, which proves
that $\big[U'\oplus V'\big]\cap\Ker(L)=\{0\}$.

 Let now $(x,y)\in U\oplus V$ be arbitrary; write $L_1(x)=L_1(u)+z$, where $u\in U$, $L_1(u)\in\mathrm{Im}(L_2)$
 and $z\in Z$. Since $z\in Z\subset\mathrm{Im}(L_1)$, one has $z=L_1(a)$ for some $a\in U'$; thus, $x=u+a+b$ for some
 $b\in\Ker(L_1)$. Choose $w\in V'$ such that $L_1(u)=L_2(w)$, and set $y=c+v$, where $c\in\Ker(L_2)$ and
 $v\in V'$. Then, $(u+b,c-w)\in\Ker(L)$, $(a,v+w)\in U'\oplus V'$ and $(x,y)=(u+b,c-w)+(a,v+w)$, which
 proves that $\Ker(L)+\big[U'\oplus V']=U\oplus V$.
\end{proof}
\subsection{Geometric preliminaries}\label{sub:geompreliminaries}
Let $M$ be a smooth manifold with $\Dim(M)\ge2$ and let $\nabla$ be an arbitrarily fixed symmetric connection on $TM$.
Given another (symmetric) connection $\nabla'$ on $TM$, there exists a (symmetric) $(1,2)$-tensor
$\Gamma$ on $M$ defined by:
\[\nabla'=\nabla+\Gamma,\]
that will be called the \emph{Christoffel tensor of $\nabla'$ relatively to $\nabla$}.
If $\nabla^g$ is the Levi--Civita connection of some semi-Riemannian
metric tensor $g$ on $M$, then using Koszul's formula, its Christoffel tensor relative to $\nabla$ is computed as follows:
\begin{equation}\label{eq:Christoffeltensor}
g\big(\Gamma^g(X,Y),Z\big)=\frac12\big[\nabla g(X,Z,Y)+\nabla g(Y,Z,X)-\nabla g(Z,X,Y)\big].
\end{equation}
For all $x\in M$ and all $v\in T_xM$, we will denote by $\Gamma^g_x(v):T_xM\to T_xM$ the map defined by $\Gamma^g_x(v)w=\Gamma^g_x(v,w)$,
for all $w\in T_xM$, and by $\Gamma_x^g(v)^*:T_xM^*\to T_xM^*$ its adjoint.
The curvature tensor $R^g$ of the connection $\nabla^g$ will be chosen with the following sign convention:
$R^g(X,Y)=[\nabla^g_X,\nabla^g_Y]-\nabla^g_{[X,Y]}$. The symbol $\exp$ will denote the exponential map
of the connection $\nabla$.

Given a smooth vector bundle $\pi:E\to M$ over $M$, we will denote by $\mathbf\Gamma(E)$ the space
of all smooth sections of $E$; given a smooth map between manifolds $f:N\to M$, then $f^*(E)$ will denote the
pull-back bundle over $N$. The fiber $\pi^{-1}(x)$ over a point $x\in M$ will be denoted by $E_x$; the dimension
of the typical fiber of $E$ will be called the \emph{rank} of $E$.
In this paper, we will be mostly interested in \emph{tensor bundles} over $M$, i.e., all those vector bundles
obtained by functorial constructions from the tangent bundle $TM$ and the cotangent bundle $TM^*$.
Given nonnegative integers $r,s$, we will denote by ${TM^*}^{(r)}\otimes TM^{(s)}$ the tensor product
of $r$ copies of $TM^*$ and $s$ copies of $TM$; sections of ${TM^*}^{(r)}\otimes TM^{(s)}$ are
called \emph{tensors of type $(s,r)$} on $M$.

The following is a result that says that we can find global  sections of a vector bundle with prescribed
value and covariant derivative along a sufficiently short curve in $M$.
\begin{lem}\label{thm:constrloc-strong}
Let $\pi:E\to M$ be a smooth vector bundle endowed with a connection $\nabla$, let $\gamma:[a,b]\to M$ be a smooth immersion
and let $V\in\Gamma\big(\gamma^*(TM)\big)$ be a smooth vector field along $\gamma$ such that $V_{t_0}$ is not parallel
to $\dot\gamma(t_0)$ for some $t_0\in\left]a,b\right[$. Then, there exists an open interval $I\subset[a,b]$ containing $t_0$
with the property that, given smooth sections $H$ and $K$ of $\gamma^*(E)$ with compact support in $I$
and given any open set $U$ containing $\gamma(I)$,
then there exists $h\in\mathbf\Gamma(E)$ with compact support contained in $U$,
such that $h_{\gamma(t)}=H_t$ and $\nabla_{V_t}h=K_t$ for all $t\in I$.
\end{lem}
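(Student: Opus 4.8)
The plan is to use the hypothesis $V_{t_0}\not\parallel\dot\gamma(t_0)$ to produce, near $\gamma(t_0)$, a two--dimensional embedded surface $\Sigma\subset M$ that contains a piece of $\gamma$ and to which $V$ is tangent along that piece. Over $\Sigma$ the two requirements ``$h=H$ on $\gamma$'' and ``$\nabla_Vh=K$ on $\gamma$'' decouple and can be met by solving a linear ODE in the direction transverse to $\gamma$; and since $V_t$ is tangent to $\Sigma$, \emph{any} extension to $M$ of the section so obtained on $\Sigma$ automatically still satisfies both conditions. The main obstacle is of a technical rather than conceptual nature: making sure that the surface is genuinely \emph{embedded}, and controlling the support of the final section inside the set $U$, which is prescribed only a posteriori.

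\emph{Construction of $\Sigma$ and of $I$.} Let $\exp$ be the exponential map fixed in Subsection~\ref{sub:geompreliminaries} (of the background symmetric connection on $TM$; the symbol $\nabla$ here denotes the connection on $E$), and put $\psi(t,s)=\exp_{\gamma(t)}(s\,V_t)$, defined for $(t,s)$ in a neighbourhood of $(t_0,0)$ in $\R^2$. Then $\psi(t,0)=\gamma(t)$, $\partial_t\psi(t,0)=\dot\gamma(t)$ and $\partial_s\psi(t,0)=V_t$, so $d\psi_{(t_0,0)}$ is injective (recall $\dot\gamma(t_0)\ne0$ since $\gamma$ is an immersion). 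As $\gamma$ is an immersion, $\gamma$ restricted to a small closed interval $J\ni t_0$ is an embedding; shrinking $J$ and choosing $\epsilon_0>0$ small, $\psi$ is an immersion on $J\times[-\epsilon_0,\epsilon_0]$ (having rank $2$ is an open condition, valid on $J\times\{0\}$), and, restricting to the embedding $\gamma$ on the compact set $J\times\{0\}$, it is in fact an embedding of $J\times[-\epsilon_0,\epsilon_0]$ into $M$. Set $I=\mathrm{int}(J)$ and $\Sigma=\psi\big(I\times(-\epsilon_0,\epsilon_0)\big)$, an embedded $2$--submanifold of $M$ with $\gamma(I)=\psi\big(I\times\{0\}\big)\subset\Sigma$ and, for every $t\in I$, $V_t=\partial_s\psi(t,0)\in T_{\gamma(t)}\Sigma$. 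We claim that this $I$ works.

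\emph{Solving the ODE and cutting off.} Let $H,K$ be sections of $\gamma^*(E)$ with compact support in $I$, fix a compact subinterval $I''\subset I$ containing $\mathrm{supp}(H)\cup\mathrm{supp}(K)$ in its interior, and let $U$ be open with $\gamma(I)\subset U$. Extend $K$ to a smooth section $W$ of $E|_\Sigma$ by $\nabla$--parallel transport in the $s$--direction, so that $W\circ\gamma=K$ and $W$ vanishes over $\psi\big((I\setminus I'')\times(-\epsilon_0,\epsilon_0)\big)$. For each fixed $t\in I$ let $s\mapsto\tilde h(\psi(t,s))$ solve the linear ODE $\Dds\,\tilde h=W$ along $s\mapsto\psi(t,s)$ with initial value $\tilde h(\gamma(t))=H_t$. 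By smooth dependence of solutions of linear ODEs on initial data and parameters, $\tilde h$ is a smooth section of $E|_\Sigma$; by construction $\tilde h\circ\gamma=H$, and for $t\in I$, $\nabla_{V_t}\tilde h=\Dds\big|_{s=0}\tilde h(\psi(t,s))=W(\gamma(t))=K_t$, while $\tilde h$ vanishes over $\psi\big((I\setminus I'')\times(-\epsilon_0,\epsilon_0)\big)$ (zero data, zero forcing). Pick $\beta\in C^\infty_c\big((-\epsilon_0,\epsilon_0)\big)$ with $\beta\equiv1$ near $0$, and let $h'$ be the section of $E|_\Sigma$ given by $h'(\psi(t,s))=\beta(s)\,\tilde h(\psi(t,s))$. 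Since $\beta(0)=1$ and $\beta'(0)=0$, we still have $h'\circ\gamma=H$ and $\nabla_{V_t}h'=\beta'(0)H_t+\beta(0)K_t=K_t$ for $t\in I$, and now $\mathrm{supp}(h')\subset\psi\big(I''\times\mathrm{supp}(\beta)\big)$. As $\psi\big(I''\times\{0\}\big)=\gamma(I'')\subset U$ is compact and $U$ is open, a tube--lemma argument (using compactness of $I''$) gives $\epsilon_1\in(0,\epsilon_0)$ with $\psi\big(I''\times(-\epsilon_1,\epsilon_1)\big)\subset U$; shrinking $\mathrm{supp}(\beta)$ into $(-\epsilon_1,\epsilon_1)$ we get $\mathrm{supp}(h')\subset U$.

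\emph{Extension to $M$.} Finally extend $h'$ from $\Sigma$ to all of $M$. Cover the compact set $S:=\mathrm{supp}(h')\subset\Sigma\cap U$ by finitely many open sets $W_1,\dots,W_n$ with each $\overline{W_i}$ compact and contained in $U$, over each of which $E$ admits a trivialisation in which $\Sigma$ appears as a coordinate slice; extend $h'|_{\Sigma\cap W_i}$ to a section $h_i$ of $E|_{W_i}$ that is constant in the slice--transverse directions of that trivialisation. Letting $\{\rho_i\}$ be a partition of unity with compact supports subordinate to $\{W_1,\dots,W_n,\,M\setminus S\}$ (the last chart carrying the zero section), set $h:=\sum_i\rho_ih_i\in\mathbf\Gamma(E)$. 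Then $\mathrm{supp}(h)\subset\bigcup_i\overline{W_i}\subset U$ is compact, and $h|_\Sigma=h'$: on $S$ one has $\sum_i\rho_i\equiv1$ and each $h_i$ equals $h'$ where defined, while off $S$ both sides vanish. Since $h$ agrees with $h'$ on $\Sigma$ and $V_t\in T_{\gamma(t)}\Sigma$, for every $t\in I$ we obtain $h_{\gamma(t)}=h'_{\gamma(t)}=H_t$ and $\nabla_{V_t}h=\Dds\big|_{s=0}h(\psi(t,s))=\Dds\big|_{s=0}h'(\psi(t,s))=\nabla_{V_t}h'=K_t$, which is the assertion. Everything outside the two delicate points noted above is routine: the openness of the immersion condition, smooth dependence of ODE solutions on parameters, the tube lemma, and partitions of unity.
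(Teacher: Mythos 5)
Your proof is correct, but it takes a genuinely different route from the paper's. The paper builds a \emph{codimension-one} hypersurface $S$ containing $\gamma(I)$ with $V$ \emph{transverse} to $S$, flows $S$ in the direction of $V$ to produce an \emph{open} tube $\phi\big(S\times\left]-\varepsilon,\varepsilon\right[\big)\subset U$, chooses a frame of $E$ parallel in the flow direction, and then simply writes $\widetilde h(x,\lambda)=\widetilde H(x)+f(\lambda)\widetilde K(x)$; since this is already defined and compactly supported in an open set, it extends to $M$ by zero. You instead build a \emph{two-dimensional} surface $\Sigma=\psi\big(I\times(-\epsilon_0,\epsilon_0)\big)$ containing $\gamma(I)$ with $V$ \emph{tangent} to $\Sigma$, solve for $h'$ on $\Sigma$ by a linear ODE with a cutoff, and then appeal to the key observation that tangency of $V$ to $\Sigma$ makes both constraints $h\vert_\gamma=H$ and $\nabla_Vh=K$ depend only on $h\vert_\Sigma$, so \emph{any} smooth extension of $h'$ to $M$ works — which you then carry out with partitions of unity and slice charts. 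Both are sound; the paper's codimension-one construction avoids the separate extension step and yields an explicit formula rather than an ODE, while your codimension-$(n-2)$ construction isolates cleanly \emph{why} the result does not depend on how $h$ is extended transversally to $\Sigma$. One small remark: the passage from ``$\psi$ is an immersion on $J\times[-\epsilon_0,\epsilon_0]$ extending the embedding $\gamma\vert_J$'' to ``$\psi$ is an embedding for small $\epsilon_0$'' deserves the usual sequential/compactness argument, but this is routine, as you flag.
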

\begin{proof}
Let $I\subset\left]a,b\right[$ be a sufficiently small open interval such that $\gamma\vert_I$ is an embedding
and such that $V_t$ is not parallel to $\dot\gamma(t)$ for all $t\in I$;
let $S\subset M$ be a smooth hypersurface containing $\gamma(I)$ and such that $V_t\not\in T_{\gamma(t)}S$ for all $t\in I$.
Choose a smooth section $V\in\mathbf\Gamma\big(S^*(TM)\big)$ such that $V\big(\gamma(t)\big)=V_t$ for all $t\in I$.
By possibly reducing the size of $I$ and $S$, we can assume the existence of a small positive number $\varepsilon$ and
a diffeomorphism $\phi:S\times\left]-\varepsilon,\varepsilon\right[\ni(x,\lambda)\mapsto\phi(x,\lambda)\in \widetilde U\subset U$, where $\widetilde U$
is an open subset of $M$ contained in $U$ that contains $\gamma(I)$, such that
$\frac{\partial\phi}{\partial\lambda}(x,0)=V(x)$ for all
$x\in S$. For instance, such a diffeomorphism can be constructed using the exponential
map $\exp'$ of some connection $\nabla'$ in $TM$ by setting $\phi(x,\lambda)=\exp'_x\big(\lambda V(x)\big)$ for all
$(x,\lambda)\in S\times\left]-\varepsilon,\varepsilon\right[$. Clearly, $\widetilde U$ can be chosen small enough so that $E\vert_{\widetilde U}$ admits
a trivialization; let $r\in\N$ be the rank of $E$ and let $p(x,\lambda):\R^r\to E_{\phi(x,\lambda)}$
be a smooth referential of
$\phi^*\big(E\vert_{\widetilde U}\big)$ with the property that $\frac{\mathrm D}{\mathrm d\lambda}p(x,\lambda)=0$, i.e., $p$ is parallel along the curves $\left]-\varepsilon,\varepsilon\right[\ni
\lambda\mapsto\phi(x,\lambda)$.
For instance, such referential $p$ can be chosen by selecting an arbitrary smooth referential of $E$ along $S$, and
then extending by parallel transport along the curve $\lambda\mapsto\phi(x,\lambda)$. The problem of determining the required section $h$ is now reduced to the
search of a smooth map $\widetilde h:S\times\left]-\varepsilon,\varepsilon\right[\to\R^r$ having compact support such that:
\begin{itemize}
\item $\widetilde h\big(\gamma(t),0\big)=p\big(\gamma(t),0\big)^{-1}H_t$;
\item $\dfrac{\partial\widetilde h}{\partial\lambda}\big(\gamma(t),0\big)=p\big(\gamma(t),0\big)^{-1}K_t$,
\end{itemize}
for all $t\in I$. Once such $\widetilde h$ has been determined, the desired section $h$ will be obtained by setting
$h\big(\phi(x,\lambda)\big)=p(x,\lambda)\circ\widetilde h(x,\lambda)$
for all $(x,\lambda)\in S\times\left]-\varepsilon,\varepsilon\right[$ and $h=0$ outside $\widetilde U$.

The function $\widetilde h$ can be constructed as follows. First, choose smooth maps $\widetilde H,\widetilde K:S\to\R^r$ having compact support
such that $p\big(\gamma(t),0\big)\circ\widetilde H\big(\gamma(t)\big)=H_t$ and $p\big(\gamma(t),0\big)\circ\widetilde K\big(\gamma(t)\big)=K_t$
for all $t\in I$. Finally, define $\widetilde h(x,\lambda)=\widetilde H(x)+f(\lambda)\widetilde K(x)$, where
$f:\left]-\varepsilon,\varepsilon\right[\to\R$ is a smooth function with compact support such that $f(\lambda)=\lambda$ near $\lambda=0$.
This concludes the construction and proves the Lemma.
\end{proof}

Given a $g$-geodesic $\gamma:I\to M$ in $M$, a \emph{Jacobi field} along $\gamma$ is a smooth vector
field $J$ along $\gamma$ that satisfies the second order linear equation $(\mathbf D^g)^2J(t)=R^g\big(\dot\gamma(t),J(t)\big)\,\dot\gamma(t)$
for all $t$, where $\mathbf D^g$ denotes covariant differentiation along $\gamma$ relatively to the connection
$\nabla^g$. The endpoints of $\gamma$ are said to be \emph{conjugate} along $\gamma$ if there exists a non trivial
Jacobi field along $\gamma$ that vanishes at both endpoints of $I$.
Affine multiples of the tangent field $\dot\gamma$ are Jacobi fields; conversely, the only Jacobi fields along
$\gamma$ that are everywhere parallel to $\dot\gamma$ must be affine multiples of $\dot\gamma$. Other than
that, Jacobi fields are parallel to the tangent field $\dot\gamma$ only at isolated points:
\begin{lem}\label{thm:Jnontparalleltogamma}
Let $\gamma:[a,b]\to M$ be a geodesic in $(M,g)$, and let $J$ be a Jacobi field
which is not everywhere parallel to $\dot\gamma$. Then, the set:
\[\big\{t\in[a,b]:J_t\ \text{is parallel to}\ \dot\gamma(t)\big\}\]
is finite.
\end{lem}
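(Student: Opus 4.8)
The plan is to show that the zero set of the smooth section $t\mapsto W_t:=J_t\wedge\dot\gamma(t)$ of $\gamma^*(\Lambda^2TM)$ consists of isolated points. Since the tangent field $\dot\gamma$ of a geodesic is nowhere zero, $W_t=0$ holds exactly when $J_t$ is parallel to $\dot\gamma(t)$, so this zero set is precisely the set $\big\{t\in[a,b]:J_t\ \text{is parallel to}\ \dot\gamma(t)\big\}$; being closed and contained in the compact interval $[a,b]$, it is finite as soon as each of its points is isolated. So I would fix a point $t_0$ of this set and prove it is isolated.

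First I would normalize so that $J_{t_0}=0$: writing $J_{t_0}=\lambda\,\dot\gamma(t_0)$ and replacing $J$ by $\widetilde J:=J-\lambda\,\dot\gamma$, one obtains a Jacobi field with $\widetilde J_{t_0}=0$ and with $\widetilde J_t\wedge\dot\gamma(t)=W_t$ for all $t$ (and $\mathbf D^g\widetilde J=\mathbf D^g J$, since $\lambda\,\dot\gamma$ has vanishing covariant derivative). Because $J$ is not everywhere parallel to $\dot\gamma$, neither is $\widetilde J$, so by the observation recalled just before the statement $\widetilde J$ is not an affine multiple of $\dot\gamma$; in particular $\widetilde J\not\equiv0$.

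The crux is then to check that $\mathbf D^g\widetilde J(t_0)$ is not parallel to $\dot\gamma(t_0)$. Here I would invoke twice that the Jacobi equation is a linear second order ODE with uniqueness for the Cauchy problem: first, $\widetilde J\not\equiv0$ together with $\widetilde J_{t_0}=0$ forces $\mathbf D^g\widetilde J(t_0)\ne0$; second, if $\mathbf D^g\widetilde J(t_0)$ equalled $c\,\dot\gamma(t_0)$ for some $c\in\R$, then $\widetilde J-c\,(t-t_0)\,\dot\gamma$ would be a Jacobi field vanishing together with its covariant derivative at $t_0$, hence identically zero, so $\widetilde J=c\,(t-t_0)\,\dot\gamma$ would be an affine multiple of $\dot\gamma$, a contradiction. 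Granting this, $\mathbf D^g W(t_0)=\mathbf D^g\widetilde J(t_0)\wedge\dot\gamma(t_0)\ne0$ (using $\mathbf D^g\dot\gamma=0$ and that the wedge of two linearly independent vectors is nonzero). Since $W_{t_0}=0$ while $\mathbf D^g W(t_0)\ne0$, a first order Taylor expansion of $W$ along $\gamma$ in a parallel referential of $\gamma^*(\Lambda^2TM)$ gives $W_t\ne0$ for all $t$ near $t_0$ with $t\ne t_0$, i.e., $t_0$ is isolated, as desired.

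I do not anticipate a genuine obstacle; the only slightly delicate points are the two reductions by affine multiples of $\dot\gamma$, which rest precisely on the uniqueness theorem for the Jacobi ODE and on the already-established description of the Jacobi fields everywhere parallel to $\dot\gamma$ as the $(at+b)\,\dot\gamma$. One could equally well dispense with $W$ and argue directly from the expansion $\widetilde J_t=(t-t_0)\big[\mathbf D^g\widetilde J(t_0)+o(1)\big]$, observing that $\mathbf D^g\widetilde J(t_0)$ stays at positive distance, in any auxiliary Riemannian metric, from the closed line $\R\,\dot\gamma(t_0)$.
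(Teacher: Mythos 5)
Your proof is correct, but it takes a genuinely different route from the paper's. The paper passes to the quotient bundle $\bigcup_t T_{\gamma(t)}M/\R\dot\gamma(t)$: because $\dot\gamma$ is parallel, $\mathbf D^g$ descends to a connection $\widetilde{\mathbf D}$ there, and by the antisymmetry $R^g(\dot\gamma,\dot\gamma)\dot\gamma=0$ the operator $R^g(\dot\gamma,\cdot)\dot\gamma$ also descends; the class $\widetilde J = J + \R\dot\gamma$ then solves a second-order linear ODE in the quotient, and the lemma is the standard fact that zeros of a nontrivial solution of such an ODE are isolated. You instead work with the wedge $W_t=J_t\wedge\dot\gamma(t)$ (which plays the role of the quotient class, since $W_t=0$ iff $J_t\in\R\dot\gamma(t)$), reduce by the affine Jacobi field $\lambda\dot\gamma + c(t-t_0)\dot\gamma$ to arrange $\widetilde J_{t_0}=0$, invoke Cauchy uniqueness for the Jacobi equation twice to conclude $\mathbf D^g\widetilde J(t_0)\notin\R\dot\gamma(t_0)$, and finish with a Taylor expansion of $W$ in a parallel frame. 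Both arguments hinge on the same point — the equivalence class of $J$ modulo $\dot\gamma$ either vanishes identically or has isolated zeros — but the paper gets it in one stroke from the structure of the quotient ODE, while your version makes the ODE fact fully explicit at each putative zero, at the cost of the pointwise normalization by an affine multiple of $\dot\gamma$. Both are fine; the paper's is a bit more conceptual, yours a bit more self-contained.
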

\begin{proof}
Since $\dot\gamma$ is parallel, the covariant differentiation operator $\mathbf D^g$ defines a connection
on the quotient bundle $\bigcup_{t\in[a,b]}T_{\gamma(t)}M/\R\dot\gamma(t)$ over the interval $[a,b]$, that
will be denoted by $\widetilde{\mathbf D}$.
Moreover, by the anti-symmetry of the curvature tensor, the linear operator $R^g\big(\dot\gamma(t),\cdot)\dot\gamma(t):T_{\gamma(t)}M\to
T_{\gamma(t)}M$ passes to the quotient and gives a well defined operator
$\widetilde R_t:T_{\gamma(t)}M/\R\dot\gamma(t)\to T_{\gamma(t)}M/\R\dot\gamma(t)$.
Thus, the class $\widetilde J=J+\R\dot\gamma$ satisfies the second order linear differential equation
$\widetilde{\mathbf D}^2\widetilde J=\widetilde R\widetilde J$. If the zeroes of $\widetilde J$ were not isolated,
then $\widetilde J$ would be identically zero, i.e., $J$ would be everywhere parallel to $\dot\gamma$.
\end{proof}

\end{section}

\begin{section}{An abstract genericity result}
\label{sec:abstrgenresult}
In this section we will study the nondegeneracy of critical points of a smoothly varying family of
variational problems; we will prove the result of \cite[Theorem 1.2]{Whi} in the
context of Banach and Hilbert manifolds. The approach followed is classical (see \cite[Chapter~4]{AbrRob},
or \cite[Section~2.11]{AbboMaj3}), and several of the results presented in this section
are very likely already existing in the literature in some form. The authors have found White's
formulation of the transversality assumption (see \eqref{thm:transversality}) particularly
well suited for their purposes, and decided to write complete proofs of its extension
to the Banach manifold setting.

Recall that, given Banach manifolds $\mathcal X$ and $\mathcal Y$, a smooth submanifold $\mathcal Z\subset\mathcal Y$, and
a $C^1$-map $F:\mathcal X\to\mathcal Y$, then $F$ is said to be \emph{transversal} to $\mathcal Z$ if for all $\mathfrak x_0\in F^{-1}(\mathcal Z)$,
$\mathrm dF(\mathfrak x_0)^{-1}\big(T_{F(\mathfrak x_0)}\mathcal Z\big)$ is complemented in $T_{\mathfrak x_0}\mathcal X$ and $\mathrm{Im}\big(\mathrm{d}F(\mathfrak x_0)\big)+T_{F(\mathfrak x_0)}\mathcal Z=T_{F(\mathfrak x_0)}\mathcal Y$.
Under these circumstances, $\mathcal M=F^{-1}(\mathcal Z)$ is a smooth embedded submanifold of $\mathcal X$, and for all $\mathfrak x_0\in\mathcal M$,
$T_{\mathfrak x_0}\mathcal M$ is given by $\mathrm dF(\mathfrak x_0)^{-1}\big(T_{F(\mathfrak x_0)}\mathcal Z\big)$.
\begin{prop}\label{thm:transversality}
Let $X$ be a  Banach manifold, $Y$ a  Hilbert manifold, and let $\mathcal A\subset X\times Y$ be an open subset.
Assume that $f:\mathcal A\to\R$ is a map of class $C^k$, with $k\ge2$, and with the property that for every $(x_0,y_0)\in\mathcal A$ such that
$\frac{\partial f}{\partial y}(x_0,y_0)=0$, the Hessian \[\frac{\partial^2f}{\partial y^2}(x_0,y_0):T_{y_0}Y\longrightarrow T_{y_0}Y^*\cong T_{y_0}Y\]
has finite codimensional image (i.e., $\frac{\partial^2f}{\partial y^2}(x_0,y_0)$ is a Fredholm operator\footnote{%
Recall that the image of a bounded linear operator, if finite codimensional, is automatically closed.}).\par Then, the map $\frac{\partial f}{\partial y}:\mathcal A\to TY^*$ is transversal to the zero section of $TY^*$ if and only
if for all $(x_0,y_0)$ with $\frac{\partial f}{\partial y}(x_0,y_0)=0$ and all
$w\in\Ker\Big[\frac{\partial^2f}{\partial y^2}(x_0,y_0)\Big]\setminus\{0\}$
there exists $v\in T_{x_0}X$ such that
\[
\frac{\partial^2f}{\partial x\partial y}(x_0,y_0)(v,w)\ne0,
\]
i.e.,
\begin{equation}\label{eq:transvcond}
\Ker\left(\frac{\partial^2f}{\partial y^2}(x_0,y_0)\right)\bigcap\mathrm{Im}\left(\frac{\partial^2f}{\partial x\partial y}(x_0,y_0)\right)^\perp=\{0\}.
\end{equation}
\end{prop}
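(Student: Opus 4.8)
The plan is to reduce the transversality condition for the map $\frac{\partial f}{\partial y}:\mathcal A\to TY^*$ at a zero $(x_0,y_0)$ to an algebraic surjectivity/complementation statement about a direct sum of two bounded operators, and then apply Lemma~\ref{thm:lemkercompl} together with Proposition~\ref{thm:lemsurj}. First I would set up coordinates: using a chart for $X$ around $x_0$ and a chart for $Y$ around $y_0$ (identifying $T_{y_0}Y$ with a Hilbert space $H$ and, via the inner product, $T_{y_0}Y^*\cong H$), the section $\frac{\partial f}{\partial y}$ becomes a $C^{k-1}$ map into $H$ vanishing at $(x_0,y_0)$, and the tangent space to the zero section of $TY^*$ at the point $\big((x_0,y_0),0\big)$ is $T_{x_0}X\oplus T_{y_0}Y\oplus\{0\}$. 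Transversality of $\frac{\partial f}{\partial y}$ to the zero section at such a point is then exactly the condition that the ``vertical part'' of its differential, namely the operator
\[
L:=L_1\oplus L_2:T_{x_0}X\oplus T_{y_0}Y\longrightarrow T_{y_0}Y^*\cong H,\qquad
L_1=\frac{\partial^2f}{\partial x\,\partial y}(x_0,y_0),\quad L_2=\frac{\partial^2f}{\partial y^2}(x_0,y_0),
\]
is surjective and has complemented kernel. (I should note that at a point of the zero section $F^{-1}(\mathcal Z)$ the relevant piece of $\mathrm dF$ is precisely this vertical component, since the horizontal component lands in the tangent to the zero section; this is the standard reduction and I would spell it out in one or two lines.)

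Next, the Hessian $L_2=\frac{\partial^2f}{\partial y^2}(x_0,y_0)$ is, by hypothesis, a Fredholm operator on $H$, and since $f$ is $C^k$ with $k\ge2$ it is symmetric, hence self-adjoint as a bounded operator $H\to H^*\cong H$; in particular $\mathrm{Im}(L_2)$ is closed, $\mathrm{Im}(L_2)^\perp=\Ker(L_2)$, and $\Ker(L_2)$ is finite dimensional. Thus both hypotheses of Lemma~\ref{thm:lemkercompl} (second part) are met: $L_2$ is self-adjoint and $P_{\Ker(L_2)}\big(\mathrm{Im}(L_1)\big)$ is automatically closed because $\Ker(L_2)$ is finite dimensional. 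Lemma~\ref{thm:lemkercompl} then gives that $L$ is surjective if and only if for every $h\in\Ker(L_2)\setminus\{0\}$ there is $v\in T_{x_0}X$ with $\langle L_1(v),h\rangle\ne0$; rewriting $\langle L_1 v,h\rangle = \frac{\partial^2f}{\partial x\,\partial y}(x_0,y_0)(v,h)$, this is precisely the stated condition, and its contrapositive/orthogonal-complement reformulation is exactly \eqref{eq:transvcond}, since $\Ker(L_2)\cap\mathrm{Im}(L_1)^\perp=\{0\}$ says no nonzero element of $\Ker(L_2)$ is orthogonal to all of $\mathrm{Im}(L_1)$.

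Finally I would dispose of the complementation requirement in the definition of transversality, which turns out to be automatic and independent of condition \eqref{eq:transvcond}: apply Proposition~\ref{thm:lemsurj} with $U=T_{x_0}X$, $V=H=T_{y_0}Y$, $W=H$, $L_1,L_2$ as above. Here $\Ker(L_2)$ is complemented since $V$ is a Hilbert space (it is even finite-codimensional), and $\mathrm{Im}(L_2)$ is finite codimensional in $W$ by the Fredholm hypothesis, so Proposition~\ref{thm:lemsurj} yields that $\Ker(L)$ is complemented in $T_{x_0}X\oplus T_{y_0}Y$. Combining the two: $\frac{\partial f}{\partial y}$ is transversal to the zero section at $(x_0,y_0)$ iff $L$ is surjective iff \eqref{eq:transvcond} holds; quantifying over all zeros $(x_0,y_0)$ of $\frac{\partial f}{\partial y}$ in $\mathcal A$ gives the proposition. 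The only mildly delicate point — the ``main obstacle'' — is the bookkeeping in the first step: verifying that the definition of transversality to the zero section really isolates the vertical operator $L$ (i.e. that the horizontal directions contribute nothing because they map into $T\mathcal Z$), and checking that self-adjointness of the Hessian with respect to the chosen Hilbert structure is what identifies $\mathrm{Im}(L_2)^\perp$ with $\Ker(L_2)$; everything after that is a direct invocation of the two functional-analytic lemmas.
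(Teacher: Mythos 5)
Your proposal is correct and follows essentially the same route as the paper's proof: identify the vertical component of $\mathrm{d}\big(\tfrac{\partial f}{\partial y}\big)(x_0,y_0)$ with $L_1\oplus L_2$, deduce the kernel-complementation automatically from Proposition~\ref{thm:lemsurj}, and reduce surjectivity to \eqref{eq:transvcond} via the self-adjoint case of Lemma~\ref{thm:lemkercompl}. The only cosmetic difference is that you argue in charts while the paper works directly with the canonical splitting $T_{0_{y_0}}TY^*\cong T_{y_0}Y\oplus T_{y_0}Y^*$, and you treat surjectivity before complementation; neither changes the substance.
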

\begin{rem}
Observe that, given $y_0$, the map $x\mapsto\frac{\partial f}{\partial y}(x,y_0)$ takes values in the fixed Hilbert space
$T_{y_0}Y^*$, so that the second derivative $\frac{\partial^2f}{\partial x\partial y}(x_0,y_0)$ is well
defined without the use of a connection on $TY^*$. Similarly, the second derivative $\frac{\partial^2f}{\partial y^2}(x_0,y_0)$ is well
defined when $\frac{\partial f}{\partial y}(x_0,y_0)=0$, and it is the Hessian of the function $y\mapsto f(x_0,y)$ at the
critical point $y_0$.
\end{rem}
\begin{proof}
Denote by $\mathbf 0$ the zero section of $TY^*$. For all $y\in Y$, denoting by $0_y$ the zero in $T_yY^*$,
the tangent space $T_{0_y}\mathbf 0$ is identified canonically with $T_yY$, so that $T_{0_y}TY^*\cong T_yY\oplus T_yY^*$; let $\pi_y:T_{0_y}TY^*\to T_yY^*$
denote the projection relative to this decomposition.
Given $(x_0,y_0)\in\mathcal A$ with $\frac{\partial f}{\partial y}(x_0,y_0)=0$, the composition \[\pi_{y_0}\circ\mathrm d\left(\frac{\partial f}{\partial y}\right)(x_0,y_0):T_{x_0}X\oplus T_{y_0}Y
\longrightarrow T_{y_0}Y^*\]
is given by the direct sum of the bounded operators:
\[L_1:=\frac{\partial^2 f}{\partial x\partial y}(x_0,y_0):T_{x_0}X\longrightarrow  T_{y_0}Y^*\cong T_{y_0}Y\]
and
\[L_2:=\frac{\partial^2f}{\partial y^2}(x_0,y_0):T_{y_0}Y\longrightarrow T_{y_0}Y^*\cong T_{y_0}Y.\]
Transversality of $\frac{\partial f}{\partial y}$ to the zero section of $TY^*$ is equivalent to  $\Ker(L_1\oplus L_2)$ being complemented in $T_{x_0}X\oplus T_{y_0}Y$
and  $L_1\oplus L_2$ being  surjective.  The condition that $\Ker(L_1\oplus L_2)$ is complemented in $T_{x_0}X\oplus T_{y_0}Y$
follows immediately from Proposition~\ref{thm:lemsurj}, which uses our assumptions
on the Hessian $\frac{\partial^2f}{\partial y^2}(x_0,y_0)$. By Lemma~\ref{thm:lemkercompl},
using the fact that $L_2$ is self-adjoint, the surjectivity of $L_1\oplus L_2$
 is equivalent to our
final assumption on the mixed second derivative $\frac{\partial^2f}{\partial x\partial y}(x_0,y_0)$.
This concludes the proof.
\end{proof}
\begin{cor}\label{thm:Fredholmamap}
In the hypotheses of Proposition~\ref{thm:transversality}, assume that the transversality condition \eqref{eq:transvcond}
is satisfied at every point $(x_0,y_0)$ with $\frac{\partial f}{\partial y}(x_0,y_0)=0$. Then, the set:
\[\mathfrak M=\Big\{(x,y)\in\mathcal A:\frac{\partial f}{\partial y}(x,y)=0\Big\}\]
is an embedded $C^{k-1}$-submanifold of $X\times Y$. For $(x_0,y_0)\in\mathfrak M$, the tangent space $T_{(x_0,y_0)}\mathfrak M$
is given by:
\begin{equation}\label{eq:spaziotangM}
T_{(x_0,y_0)}\mathfrak M=\Big\{(v,w)\in T_{x_0}X\oplus T_{y_0}Y:\frac{\partial^2f}{\partial x\partial y}(x_0,y_0)v+\frac{\partial^2f}{\partial y^2}(x_0,y_0)w=0\Big\}.\qed
\end{equation}
\end{cor}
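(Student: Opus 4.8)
The plan is to present $\mathfrak M$ as the transversal preimage of a submanifold and then quote the standard regular-value-type theorem recalled just before Proposition~\ref{thm:transversality}. First I would set $F=\frac{\partial f}{\partial y}:\mathcal A\to TY^*$; since $f$ is of class $C^k$ with $k\ge2$, the map $F$ is of class $C^{k-1}$, and the zero section $\mathbf 0\subset TY^*$ is a $C^{k-1}$-submanifold with $\mathfrak M=F^{-1}(\mathbf 0)$. The hypothesis that the transversality condition \eqref{eq:transvcond} holds at every $(x_0,y_0)$ with $\frac{\partial f}{\partial y}(x_0,y_0)=0$ is, by Proposition~\ref{thm:transversality}, exactly the assertion that $F$ is transversal to $\mathbf 0$ at every point of $F^{-1}(\mathbf 0)$. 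Therefore the general principle recalled above yields that $\mathfrak M$ is an embedded $C^{k-1}$-submanifold of the open set $\mathcal A\subset X\times Y$, with $T_{(x_0,y_0)}\mathfrak M=\mathrm dF(x_0,y_0)^{-1}\big(T_{0_{y_0}}\mathbf 0\big)$ for each $(x_0,y_0)\in\mathfrak M$.

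Next I would identify this preimage with the right-hand side of \eqref{eq:spaziotangM}, reusing the notation introduced in the proof of Proposition~\ref{thm:transversality}: under the canonical splitting $T_{0_{y_0}}TY^*\cong T_{y_0}Y\oplus T_{y_0}Y^*$ the subspace $T_{0_{y_0}}\mathbf 0$ is the first summand, so that $(v,w)\in\mathrm dF(x_0,y_0)^{-1}\big(T_{0_{y_0}}\mathbf 0\big)$ if and only if $\pi_{y_0}\circ\mathrm dF(x_0,y_0)(v,w)=0$. Since that composition equals $L_1\oplus L_2$ with $L_1=\frac{\partial^2f}{\partial x\partial y}(x_0,y_0)$ and $L_2=\frac{\partial^2f}{\partial y^2}(x_0,y_0)$, this is precisely the condition $\frac{\partial^2f}{\partial x\partial y}(x_0,y_0)v+\frac{\partial^2f}{\partial y^2}(x_0,y_0)w=0$ appearing in \eqref{eq:spaziotangM}.

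The substance of the statement has in fact already been absorbed into Proposition~\ref{thm:transversality} --- namely the equivalence between the pointwise algebraic condition \eqref{eq:transvcond} and transversality of $F$ to $\mathbf 0$, which relies on the Fredholmness of the Hessian through Lemma~\ref{thm:lemkercompl} and Proposition~\ref{thm:lemsurj}. Consequently I do not expect a genuine obstacle here; the only points to handle with care are that transversality is needed at \emph{all} points of $F^{-1}(\mathbf 0)$ (which is exactly the hypothesis, applied at each critical point of $f_x$) and the bookkeeping of the derivative loss, so that the regularity of $\mathfrak M$ comes out as $C^{k-1}$ rather than $C^k$.
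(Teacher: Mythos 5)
Your proof is correct and coincides exactly with the argument the paper intends; the paper ends the statement with \qed precisely because the result is read off by applying Proposition~\ref{thm:transversality} together with the general transversal-preimage principle recalled just before it, and your identification of $\mathrm dF(x_0,y_0)^{-1}(T_{0_{y_0}}\mathbf 0)$ with the right-hand side of \eqref{eq:spaziotangM} via $\pi_{y_0}\circ\mathrm dF = L_1\oplus L_2$ is exactly the intended bookkeeping.
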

Let us recall that a Morse function on a Hilbert manifold is a smooth map all of whose critical points are (strongly) nondegenerate.
A subset of a metric space is said to be \emph{generic} if it is the countable intersection of dense open subsets; by Baire's theorem,
a generic set is dense.
\begin{cor}\label{thm:genericnondegeneracy}
Under the assumptions of Corollary~\ref{thm:Fredholmamap},
if $\Pi:X\times Y\to X$ is the projection onto the first factor, then the restriction of $\Pi$ to $\mathfrak M$ is a nonlinear $C^{k-1}$ Fredholm
map of index zero.
The critical points of $\Pi\vert_{\mathfrak M}$ are elements $(x_0,y_0)\in\mathfrak M$ such that $y_0$ is a degenerate critical point of the functional
$\mathcal A_{x_0}\ni y\mapsto f(x_0,y)\in\R$, where \[\mathcal A_x=\big\{y\in Y:(x,y)\in\mathcal A\big\}.\]
If $X$ and $Y$ are separable, then the set of $x\in X$ such that the functional $\mathcal A_x\ni y\mapsto f(x,y)\in\R$
is a Morse function is generic in the open set $\Pi(\mathcal A)\subset X$.
\end{cor}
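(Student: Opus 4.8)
The plan is to obtain the statement by assembling the results established above and applying the infinite-dimensional Sard--Smale theorem; concretely there are three things to verify, in the order in which the corollary lists them.

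\medskip

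First I would establish that $\Pi\vert_{\mathfrak M}$ is a $C^{k-1}$ nonlinear Fredholm map of index $0$ and identify its critical points. By Corollary~\ref{thm:Fredholmamap}, the transversality hypothesis \eqref{eq:transvcond} guarantees that $\mathfrak M$ is a $C^{k-1}$ embedded submanifold of $X\times Y$ with tangent space \eqref{eq:spaziotangM}, and $\Pi\vert_{\mathfrak M}$ inherits class $C^{k-1}$ from the smooth projection $\Pi$. Fixing $(x_0,y_0)\in\mathfrak M$ and abbreviating $L_1=\frac{\partial^2f}{\partial x\partial y}(x_0,y_0)$ and $L_2=\frac{\partial^2f}{\partial y^2}(x_0,y_0)$, the differential of $\Pi\vert_{\mathfrak M}$ at $(x_0,y_0)$ is the restriction of $(v,w)\mapsto v$ to $T_{(x_0,y_0)}\mathfrak M=\{(v,w):L_1v+L_2w=0\}$, so its kernel is $\{0\}\times\Ker(L_2)$ (finite dimensional, since $L_2$ is Fredholm by assumption) and its image is $L_1^{-1}\big(\mathrm{Im}(L_2)\big)$, which is finite codimensional in $T_{x_0}X$ by Lemma~\ref{thm:lemmascemo}; hence $\Pi\vert_{\mathfrak M}$ is Fredholm. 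To pin the index to $0$ I would use that $L_2$ is self-adjoint with closed image, so $\mathrm{codim}\,\mathrm{Im}(L_2)=\Dim\Ker(L_2)$, together with the surjectivity of $L_1\oplus L_2$, which follows from the transversality hypothesis \eqref{eq:transvcond} as in the proof of Proposition~\ref{thm:transversality}, i.e.\ $\mathrm{Im}(L_1)+\mathrm{Im}(L_2)=T_{y_0}Y^*$; inserting both into Lemma~\ref{thm:lemmascemo} gives $\mathrm{codim}_{T_{x_0}X}\big(L_1^{-1}(\mathrm{Im}(L_2))\big)=\Dim\Ker(L_2)$, whence the index is $0$. Since an index-$0$ Fredholm operator is onto exactly when it is injective, the differential of $\Pi\vert_{\mathfrak M}$ at $(x_0,y_0)$ is surjective if and only if $\Ker(L_2)=\{0\}$, i.e.\ if and only if $y_0$ is a strongly nondegenerate critical point of $\mathcal A_{x_0}\ni y\mapsto f(x_0,y)$; thus the critical points of $\Pi\vert_{\mathfrak M}$ are precisely the pairs $(x_0,y_0)$ for which $y_0$ is a degenerate critical point of $f(x_0,\cdot)$.

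\medskip

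For the genericity statement I would then invoke Sard--Smale. If $X$ and $Y$ are separable then $X\times Y$ is separable metrizable, hence second countable, and so is the embedded submanifold $\mathfrak M$; since $k-1\ge1$, the Sard--Smale theorem applies to the index-$0$ Fredholm map $\Pi\vert_{\mathfrak M}:\mathfrak M\to X$ and yields that its set $\mathcal R$ of regular values is generic in $X$, so that $\mathcal R\cap\Pi(\mathcal A)$ is generic in the open set $\Pi(\mathcal A)$. It remains to recognize $\mathcal R\cap\Pi(\mathcal A)$ as the set in the statement: if $x\in\Pi(\mathcal A)\setminus\Pi(\mathfrak M)$ then $f(x,\cdot)$ has no critical point in the nonempty open set $\mathcal A_x$, so $x$ is vacuously a regular value and $f(x,\cdot)$ is vacuously Morse; if $x\in\Pi(\mathfrak M)$ then, by the characterization of the critical points of $\Pi\vert_{\mathfrak M}$ above, $x$ is a regular value exactly when every critical point of $f(x,\cdot)$ is nondegenerate, i.e.\ when $f(x,\cdot)$ is Morse. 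In either case $x\in\mathcal R$ if and only if $f(x,\cdot)$ is a Morse function, which proves the claim.

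\medskip

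The parts of the argument that are genuinely delicate have already been absorbed into Propositions~\ref{thm:lemsurj} and \ref{thm:transversality} and Corollary~\ref{thm:Fredholmamap}; within the present proof the point requiring the most care is the index computation, which rests on the self-adjointness of the Hessian $\frac{\partial^2f}{\partial y^2}$ — so that $\mathrm{codim}\,\mathrm{Im}(L_2)$ equals $\Dim\Ker(L_2)$ — combined with the surjectivity of $L_1\oplus L_2$ coming from transversality. Without the index being exactly $0$ one would lose the clean equivalence between regular values of $\Pi\vert_{\mathfrak M}$ and Morse functions in the family, on which the whole conclusion hinges; a minor additional point is that Sard--Smale needs a second countable domain, which is exactly why separability of $X$ and $Y$ is assumed.
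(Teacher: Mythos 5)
Your proposal is correct and follows essentially the same route as the paper: identify the kernel and image of $\mathrm d\Pi\vert_{\mathfrak M}$, compute the index via Lemma~\ref{thm:lemmascemo} and self-adjointness of the Hessian, and conclude with Sard--Smale. The only difference is cosmetic --- you characterize the critical points via the observation that an index-$0$ Fredholm operator is onto iff injective, whereas the paper argues directly by taking orthogonal complements of $\mathrm{Im}(L_1)\subset\mathrm{Im}(L_2)$ and invoking the transversality condition; your version is a touch cleaner, and your explicit treatment of the vacuous case $x\in\Pi(\mathcal A)\setminus\Pi(\mathfrak M)$ is a small but welcome refinement.
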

\begin{proof}
Fix $(x_0,y_0)\in\mathfrak M$. The kernel of $\mathrm d\Pi(x_0,y_0)\vert_{T_{(x_0,y_0)}\mathfrak M}$
is given by $T_{(x_0,y_0)}\mathfrak M\cap\big[\{0\}\oplus T_{y_0}Y\big]$. This space is (isomorphic to)
$\Ker\big(\frac{\partial^2f}{\partial y^2}(x_0,y_0)\big)$, which is finite dimensional.
From~\eqref{eq:spaziotangM}, the image $\mathrm d\Pi(x_0,y_0)\left(T_{(x_0,y_0)}\mathfrak M\right)$ is given by the inverse image
\[\left[\frac{\partial^2 f}{\partial x\partial y}(x_0,y_0)\right]^{-1}\Big[\mathrm{Im}
\Big(\frac{\partial^2f}{\partial y^2}(x_0,y_0)\Big)\Big];\]
since $\frac{\partial^2f}{\partial y^2}(x_0,y_0)$ is Fredholm, its image has finite
codimension in $T_{y_0}Y$. By Lemma~\ref{thm:lemmascemo}, also
$\left(T_{(x_0,y_0)}\mathfrak M\right)$ has finite codimension in $T_{x_0}X$, so that  $\mathrm d\Pi(x_0,y_0)\left(T_{(x_0,y_0)}\mathfrak M\right)$ is closed and therefore Fredholm. In fact, since by assumption~\eqref{eq:transvcond}
the linear map $\frac{\partial^2 f}{\partial x\partial y}(x_0,y_0)
\oplus\frac{\partial^2f}{\partial y^2}(x_0,y_0)$ is surjective, we have that by Lemma~\ref{thm:lemmascemo}
the codimension of the image of $\mathrm d\Pi(x_0,y_0)\vert_{T_{(x_0,y_0)}\mathfrak M}$ equals the codimension of
$\mathrm{Im}\big(\frac{\partial^2f}{\partial y^2}(x_0,y_0)\big)$; as $\frac{\partial^2f}{\partial y^2}(x_0,y_0)$ is a self-adjoint Fredholm operator, this codimension coincides with the dimension of \[\mathrm{Im}\left(\frac{\partial^2f}{\partial^2 y}(x_0,y_0)\right)^\perp=
\mathrm{Ker}\left(\frac{\partial^2f}{\partial^2 y}(x_0,y_0)\right),\] so that
the Fredholm index of $\mathrm d\Pi(x_0,y_0)\vert_{T_{(x_0,y_0)}\mathfrak M}$ is equal to zero.

It is easily seen that $(x_0,y_0)$ is a regular point of $\Pi\vert_{\mathfrak M}$, i.e.,
that $\mathrm d\Pi(x_0,y_0)\vert_{T_{(x_0,y_0)}\mathfrak M}$ is surjective, if and only if:
\begin{equation}\label{eq:dPisurjective}
\mathrm{Im}\left(\frac{\partial^2f}{\partial x\partial y}(x_0,y_0)\right)\subset\mathrm{Im}\left(\frac{\partial^2f}{\partial y^2}(x_0,y_0)\right);
\end{equation}
using again that
$\mathrm{Im}\left(\frac{\partial^2f}{\partial^2 y}(x_0,y_0)\right)^\perp=
\mathrm{Ker}\left(\frac{\partial^2f}{\partial^2 y}(x_0,y_0)\right)$,
and taking orthogonal complements,
\eqref{eq:dPisurjective} becomes:
\[\mathrm{Im}\left(\frac{\partial^2f}{\partial x\partial y}(x_0,y_0)\right)^\perp\supset\Ker\left(\frac{\partial^2f}{\partial y^2}(x_0,y_0)\right).\]
Using assumption~\eqref{eq:transvcond}, $(x_0,y_0)$ is a regular point of $\Pi\vert_{\mathfrak M}$
if and only if $\Ker\left(\frac{\partial^2f}{\partial y^2}(x_0,y_0)\right)$ is trivial,
i.e., if and only if $x_0$ is a nondegenerate critical point of $x\mapsto f(x,y_0)$.

Thus, the set of $x\in X$ such that the functional $\mathcal A_x\ni y\mapsto f(x,y)\in\R$ is a Morse function
coincides with the set of regular values of the map $\Pi\vert_{\mathfrak M}$.
The last statement follows now immediately from Corollary~\ref{thm:Fredholmamap} and Sard--Smale's theorem (see \cite{Sma}).
\end{proof}
\begin{rem}\label{thm:remordinederivatemiste}
We will apply Corollary~\ref{thm:genericnondegeneracy} in situations where the Banach manifold $X$ is indeed
an open subset of a Banach space $E$. In this case, the partial derivative $\frac{\partial f}{\partial x}$ is a map
on $X\times Y$ taking value in the fixed Banach space $E^*$, and thus it can be differentiated with respect
to the second variable $y$. Given $(x_0,y_0)\in\mathfrak M$, we have two maps:
\[\frac{\partial^2 f}{\partial x\partial y}(x_0,y_0):E\to T_{y_0}Y^*,\quad\text{and}\quad \frac{\partial^2 f}{\partial y\partial x}(x_0,y_0):
T_{y_0}Y\to E^*.\]
Using local charts and Schwarz Lemma, it is easy to see that these two maps are transpose of each other.
In particular, if we consider  $\frac{\partial^2 f}{\partial x\partial y}(x_0,y_0)$ as a bilinear form on
$E\times T_{y_0}Y$ and $\frac{\partial^2 f}{\partial y\partial x}(x_0,y_0)$ as a bilinear form on $T_{y_0}Y\times E$,
then:
\[\frac{\partial^2 f}{\partial x\partial y}(x_0,y_0)[v,w]=\frac{\partial^2 f}{\partial y\partial x}(x_0,y_0)[w,v],
\quad\forall\,v\in E,\,w\in T_{y_0}Y.\]
\end{rem}
\end{section}

\begin{section}{Morse geodesic functionals}
\label{sec:morsegeofunctionals}
\subsection{Semi-Riemannian metrics}\label{sub:semiRiemannianmetrics}
\label{sub:metrics}
Let us consider a smooth\footnote{%
For the remainder of the article, we will be somewhat sloppy about the use of the adjective ``smooth''.
In the case of manifolds, by smooth we will always mean ``of class $C^k$, with $k\ge3$'', and
in the case of tensors, in particular metric tensors, smooth will mean ``of class $C^k$, with $k\ge2$''. This guarantees
that the corresponding geodesic action functionals are of class at least $C^2$.
Clearly, manifolds are to be of class strictly larger than the required regularity class of tensors.}
manifold $M$ with $\Dim(M)=n$. Given $k\ge2$ and $\nu\in\{0,\ldots,n\}$, we will
denote by $\mathrm{Met}^k_\nu(M)$ the set of all metric tensors $g$ on $M$ of class $C^k$ and having index $\nu$.
This is a subset of the vector space $\mathbf\Gamma_{\mathrm{sym}}^k(TM^*\otimes TM^*)$ of all sections $\mathfrak b$ of
class $C^k$ of the vector bundle $TM^*\otimes TM^*$ such that $\mathfrak b_x:T_xM\times T_xM\to\R$ is symmetric
for all $x$.

It will be interesting to consider the case of \emph{non compact} manifolds $M$, in which case there is no canonical
Banach structure on the space of tensors over $M$.
In order to overcome this problem, it will be useful to consider the following definition.
A vector subspace $\mathcal E$ of  $\mathbf\Gamma_{\mathrm{sym}}^k(TM^*\otimes TM^*)$ will be called\footnote{%
In this paper we will only be interested in metric tensor fields, but clearly a similar definition may be given
for tensor fields of all kind over $M$.}
a \emph{$C^k$-Whitney type Banach space of tensor fields} over $M$ when:
\begin{itemize}
\item[(a)] $\mathcal E$ contains all tensor fields in $\mathbf\Gamma_{\mathrm{sym}}^k(TM^*\otimes TM^*)$ having
compact support;
\item[(b)] $\mathcal E$ is endowed with a Banach space norm $\Vert\cdot\Vert_{\mathcal E}$ with the property that
$\Vert\cdot\Vert_{\mathcal E}$-convergence of a sequence implies convergence in the weak Whitney $C^k$-topology.
\end{itemize}
More explicitly, axiom~(b) above means that given any sequence $(\mathfrak b_n)_{n\in\N}$ and an
element $\mathfrak b_\infty$ in $\mathcal E$
such that $\lim\limits_{n\to\infty}\Vert\mathfrak b_n-\mathfrak b_\infty\Vert_{\mathcal E}=0$, and given any compact
subset $K\subset M$, then the restriction $\mathfrak b_n\vert_{K}$ tends to $\mathfrak b_\infty\vert_K$ in
the $C^k$-topology as $n\to\infty$.

\begin{example}
\label{exa:exaWtBstf}
Examples of $C^k$-Whitney type Banach spaces of tensor fields over $M$ can be obtained
easily introducing an auxiliary Riemannian metric $g_{\mathrm R}$ on $M$, whose Levi--Civita connection will be denoted
 by $\overline\nabla$. The choice of the Riemannian metric $g_{\mathrm R}$ induces naturally a connection on all vector bundles
over $M$ that are obtained by functorial constructions from the tangent bundle $TM$. Moreover, for all $r,s\in\N$,
we have Hilbert space norms on every tensor product ${T_xM^*}^{(r)}\otimes {T_xM}^{(s)}$ induced by $g_{\mathrm R}$;
all these norms will be denoted by the same symbol $\Vert\cdot\Vert_{\mathrm R}$.
Then, we will denote by $\mathbf\Gamma_{\mathrm{sym}}^k(TM^*\otimes TM^*;g_{\mathrm R})$
the subset of $\mathbf\Gamma_{\mathrm{sym}}^k(TM^*\otimes TM^*)$ consisting of all section
$\mathfrak b$ such that:
\begin{equation}\label{eq:defnormak}
\Vert\mathfrak b\Vert_k=\max_{j=0,\ldots,k}\left[\sup_{x\in M}\left\Vert\overline\nabla^j\mathfrak b(x)\right\Vert_{\mathrm R}\right]<+\infty.
\end{equation}
When $M$ is compact $\mathbf\Gamma_{\mathrm{sym}}^k(TM^*\otimes TM^*;g_{\mathrm R})=\mathbf\Gamma_{\mathrm{sym}}^k(TM^*\otimes TM^*)$.
Endowed with the norm $\Vert\cdot\Vert_k$ in \eqref{eq:defnormak}, $\mathbf\Gamma_{\mathrm{sym}}^k(TM^*\otimes TM^*;g_{\mathrm R})$
is a separable normed space, which is complete provided that the Riemannian metric $g_{\mathrm R}$ is chosen to be complete.
Clearly, $\mathbf\Gamma_{\mathrm{sym}}^k(TM^*\otimes TM^*;g_{\mathrm R})$ contains all elements in $\mathbf\Gamma_{\mathrm{sym}}^k(TM^*\otimes TM^*)$
having compact support. Moreover, $\Vert\cdot\Vert_k$-convergence implies $C^k$-convergence on compact sets.
Thus, $\mathbf\Gamma_{\mathrm{sym}}^k(TM^*\otimes TM^*;g_{\mathrm R})$ is an example of $C^k$-Whitney type Banach spaces of tensor fields over $M$.
\end{example}
Other examples of $C^k$-Whitney type Banach spaces of tensor fields over $M$ can be obtained by considering
elements in $\mathbf\Gamma_{\mathrm{sym}}^k(TM^*\otimes TM^*)$ satisfying suitable boundedness assumptions
at infinity on the first $k$ covariant derivatives. Asymptotic flatness is a typical assumption, particularly fashionable among physicists.

In the statements of some of our results, we will consider open subsets $\mathcal A$ of
a given $C^k$-Whitney type Banach space $\mathcal E$ of tensor fields over $M$, where the elements of
$\mathcal A$ are assumed to be semi-Riemannian metric tensors of a given index.
It is easy to show that, when $M$ is not compact, the set $\mathrm{Met}^k_\nu(M)\cap\mathbf\Gamma_{\mathrm{sym}}^k(TM^*\otimes TM^*;g_{\mathrm R})$
is \emph{not} open in $\mathbf\Gamma_{\mathrm{sym}}^k(TM^*\otimes TM^*;g_{\mathrm R})$. A typical
open\footnote{%
In order to see that the set $\mathrm{Met}^k_{\nu,\star}(M;g_{\mathrm R})$ is open in
$\mathbf\Gamma_{\mathrm{sym}}^k(TM^*\otimes TM^*;g_{\mathrm R})$, one uses the fact that the
function $A\mapsto\lambda_*(A)=\min\big\{\vert\lambda\vert:\text{$\lambda$ is an eigenvalue of $A$}\big\}$
is Lipschitz continuous on the set of symmetric operators $A$ on $\R^n$. This is proved easily using
the equality $\lambda_*(A)=\min_{\Vert x\Vert=1}\Vert Ax\Vert$, from which one deduces
that $\left\vert\lambda_*(A)-\lambda_*(B)\right\vert\le\Vert A-B\Vert$ for all symmetric
operators $A$ and $B$.}
 subset of $\mathbf\Gamma_{\mathrm{sym}}^k(TM^*\otimes TM^*;g_{\mathrm R})$ consisting of
semi-Riemannian metric tensors of index $\nu$ is:
\[\mathrm{Met}^k_{\nu,\star}(M;g_{\mathrm R})=\Big\{\mathfrak b\in\mathrm{Met}^k_\nu(M)
\cap\mathbf\Gamma_{\mathrm{sym}}^k(TM^*\otimes TM^*;g_{\mathrm R}):\sup_{x\in M}\Vert\mathfrak b_x^{-1}\Vert_{\mathrm R}<+\infty \Big\};\]
here, $\mathfrak b_x^{-1}$ is the inverse of $\mathfrak b_x$ seen as a linear operator
$\mathfrak b_x:T_xM\to T_xM^*$. The assumption $\sup\limits_{x\in M}\Vert\mathfrak b_x^{-1}\Vert_{\mathrm R}<+\infty$
is equivalent to requiring that the eigenvalue with minimum absolute value of the $g_{\mathrm R}$-symmetric operator
$\mathfrak b_x$ stays away from $0$ uniformly on $M$.

\noindent
Let $p,q\in M$ be fixed points, and let $\Omega_{p,q}(M)$ denote the set of all curves $\gamma:[0,1]\to M$ of Sobolev
class $H^1$ such that $\gamma(0)=p$ and $\gamma(1)=q$; it is well known that $\Omega_{p,q}(M)$ is endowed with a Hilbert
manifold structure modeled on the separable Hilbert space $H^1_0\big([0,1],\R^n\big)$.
For $\gamma\in\Omega_{p,q}(M)$, the pull-back bundle $\gamma^*(TM)$ is endowed with a
Riemannian structure on the fibers induced by the Riemannian structure $g_{\mathrm R}$.
The tangent space $T_\gamma\Omega_{p,q}(M)$ is identified with the Hilbertable space
of all sections $V$ of $\gamma^*(TM)$ having Sobolev class $H^1$, and satisfying $V(0)=V(1)=0$.
For the purposes of this paper,  the choice of a specific Hilbert--Riemann structure on
the infinite dimensional manifold $\Omega_{p,q}(M)$ will not be relevant; however, it will
be useful to have at disposal the following inner product on the tangent spaces $T_\gamma\Omega_{p,q}(M)$:
\begin{equation}\label{eq:definnerprTgamma}
\llangle V,W\rrangle=\int_0^1g_{\mathrm R}(\mathbf D^{\mathrm R}V,\mathbf D^{\mathrm R}W)\,\mathrm dt.
\end{equation}
Here, $g_{\mathrm R}$ is an arbitrarily fixed complete Riemannian metric on $M$ and $\mathbf D^{\mathrm R}$
denotes covariant differentiation of vector fields along $\gamma$ with respect to the Levi--Civita connection
of  $g_{\mathrm R}$.

\subsection{Genericity of metrics without degenerate geodesics}
\label{sub:semiRiemgeneric}
We will henceforth consider a fixed $C^k$-Whitney type Banach space $\mathcal E$ of tensor fields over $M$
and a (non empty) open subset $\mathcal A$ of $\mathcal E$ with $\mathcal A\subset\mathcal E\cap\mathrm{Met}^k_\nu(M)$.
A complete Riemannian metric $g_{\mathrm R}$ is also assumed to be fixed, in order to use the
Hilbert manifold structure \eqref{eq:definnerprTgamma} in $\Omega_{p,q}(M)$.
Consider the geodesic action functional:
\[F:\mathcal A\times\Omega_{p,q}(M)\longrightarrow\R\]
defined by:
\[F(g,\gamma)=\tfrac12\int_0^1g(\dot\gamma,\dot\gamma)\,\mathrm dt.\]
This is a map of class $C^k$. More precisely, $F$ is smooth (i.e., $C^\infty$) in the variable $g\in\mathcal A$, while in the
variable $\gamma$ it is of class $C^k$, the same regularity required for the metrics.
This is easily proved, observing that taking $j$ derivatives of $F$ with respect to the variable $\gamma$ involves\footnote{%
For instance, the first derivative $\frac{\partial F}{\partial\gamma}(\gamma_0,g_0)$ in the direction $V\in T_\gamma\Omega_{p,q}(M)$
is given by the integral $\int_0^1g_0(\dot\gamma_0,\mathbf D^{g_0}V)\,\mathrm dt$, where $\mathrm D^{g_0}$ is the covariant derivative of
vector fields along $\gamma$ relatively to the Levi--Civita connection $\nabla^{g_0}$ of $g_0$. This requires the Christoffel tensors of $g$, which are computed
in terms of the first derivatives of the metric coefficients. The second derivative $\frac{\partial^2 F}{\partial\gamma^2}(\gamma_0,g_0)$
involves the curvature tensor of $\nabla^{g_0}$ (see formula \eqref{eq:dersecrespgamma}), i.e., the second derivative of $g$.
Higher order derivatives of $F$ with respect to $\gamma$ are computed in terms of higher order covariant derivatives of the curvature tensor
of $\nabla^{g_0}$.}
the first $j$ derivatives of the metric $g$.

Given $g_0\in\mathcal A$ and $\gamma_0\in\Omega_{p,q}(M)$, then
$\frac{\partial F}{\partial\gamma}(g_0,\gamma_0)=0$ if and only if $\gamma_0$ is a $g_0$-geodesic\footnote{%
By geodesic, we will always mean an \emph{affinely parameterized} geodesic.} in $M$ joining $p$ and $q$.
Given one such pair $(g_0,\gamma_0)$, the second derivative $\frac{\partial^2F}{\partial\gamma^2}$ at $(g_0,\gamma_0)$ is:
\begin{equation}\label{eq:dersecrespgamma}
\frac{\partial^2F}{\partial\gamma^2}(g_0,\gamma_0)(V,W)=\int_0^1g_0\big(\mathbf D^{g_0}V,\mathbf D^{g_0}W\big)+g_0\big(R^{g_0}(\dot\gamma_0,V)\,\dot\gamma_0,W\big)\,\mathrm dt,
\end{equation}
where $\mathbf D^{g_0}$ denotes the covariant derivative along $\gamma_0$ induced by the Levi--Civita connection $\nabla^{g_0}$ of $g_0$, and
$R^{g_0}$ is the curvature tensor of $\nabla^{g_0}$.
This is the classical \emph{index form} of $\gamma_0$ relatively to the metric $g_0$.
\begin{lem}\label{thm:fredholmness}
$\frac{\partial^2F}{\partial\gamma^2}(g_0,\gamma_0)$ is a \emph{Fredholm} symmetric bilinear form on $T_{\gamma_0}\Omega_{p,q}(M)$,
i.e., it is represented by a self-adjoint Fredholm operator on $T_{\gamma_0}\Omega_{p,q}(M)$ relatively to the inner
product \eqref{eq:definnerprTgamma}.
\end{lem}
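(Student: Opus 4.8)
The plan is to show that the bounded, self-adjoint operator $T$ on $\bigl(T_{\gamma_0}\Omega_{p,q}(M),\llangle\cdot,\cdot\rrangle\bigr)$ that represents the index form $\frac{\partial^2F}{\partial\gamma^2}(g_0,\gamma_0)$ of \eqref{eq:dersecrespgamma} is a compact perturbation of a Fredholm operator. Self-adjointness of $T$ is immediate from the symmetry of the bilinear form, and it already forces the Fredholm index to be zero, so the only thing genuinely at stake is Fredholmness.

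First I would fix a $g_{\mathrm R}$-parallel trivialization of $\gamma_0^*(TM)$ over $[0,1]$. Since $\gamma_0$ is a $g_0$-geodesic and $g_0$ is of class at least $C^2$, the velocity $\dot\gamma_0$ is continuous and bounded on $[0,1]$. In this trivialization covariant differentiation $\mathbf D^{\mathrm R}$ becomes ordinary differentiation, so that $\llangle V,W\rrangle=\int_0^1\langle V',W'\rangle\,\mathrm dt$ and $T_{\gamma_0}\Omega_{p,q}(M)\cong H^1_0\bigl([0,1],\R^n\bigr)=:\mathcal H$; moreover $g_0$ along $\gamma_0$ is represented by a continuous field $t\mapsto S(t)$ of symmetric invertible matrices with $\sup_t\Vert S(t)^{-1}\Vert<\infty$, while $\mathbf D^{g_0}=\tfrac{\mathrm d}{\mathrm dt}+\Gamma$ for a continuous matrix-valued $\Gamma$ (the Christoffel tensor of $\nabla^{g_0}$ relative to $\overline\nabla$, contracted with $\dot\gamma_0$). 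Substituting into \eqref{eq:dersecrespgamma} and expanding, the index form evaluated on $(V,W)$ becomes $B_0(V,W)+B_1(V,W)$, where $B_0(V,W)=\int_0^1\langle S\,V',W'\rangle\,\mathrm dt$ is the ``principal part'' and $B_1$ collects the remaining summands, each of which carries at most one of the derivatives $V'$, $W'$ and otherwise only continuous bounded coefficients (including the curvature term); the Poincar\'e inequality $\Vert V\Vert_\infty\le C\Vert V'\Vert_{L^2}$ shows both forms are bounded on $\mathcal H$.

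Next I would show that $B_1$ is represented by a \emph{compact} operator and $B_0$ by a \emph{Fredholm} one. For $B_1$: each summand, after at most one integration by parts (licit because $V(0)=V(1)=0$) used to move a derivative onto an antiderivative, becomes a bilinear form whose representing operator on $\mathcal H$ factors through the compact Sobolev embedding $H^1([0,1])\hookrightarrow L^2([0,1])$, hence is compact. For $B_0$: the map $\Phi\colon\mathcal H\to L^2$, $\Phi V=V'$, is an isometry onto the closed subspace $L^2_0:=\{\xi\in L^2:\int_0^1\xi=0\}$, which has codimension $n=\Dim(M)$, and it turns $B_0$ into $(\xi,\eta)\mapsto\langle S\xi,\eta\rangle_{L^2}$ on $L^2_0$, whose representing operator is $P_0\,M_S|_{L^2_0}$, with $M_S$ the pointwise multiplication by $S$ on $L^2$ and $P_0\colon L^2\to L^2_0$ the orthogonal projection. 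Since $M_S$ is an isomorphism of $L^2$, the operator $P_0M_SP_0=M_S+(\text{finite rank})$ is Fredholm on $L^2$; with respect to $L^2=L^2_0\oplus\R^n$ it acts as $\bigl(P_0M_S|_{L^2_0}\bigr)\oplus0$, so $P_0M_S|_{L^2_0}$ is Fredholm on $L^2_0$, and therefore the operator representing $B_0$ on $\mathcal H$ is Fredholm. Consequently $T=(\text{Fredholm})+(\text{compact})$ is Fredholm, which is the assertion.

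The main obstacle is precisely the Fredholmness of the principal part $B_0$: since $g_0$ is \emph{indefinite}, $S$ is not positive, $B_0$ need not be an equivalent inner product on $\mathcal H$, and its representing operator need not be invertible (which is exactly the genesis of degenerate geodesics). Fredholmness must instead be extracted from the pointwise invertibility of the matrices $S(t)$ together with the finite-codimension bookkeeping caused by the boundary conditions — and here it is tacitly used that the signature of $S(t)$, i.e. the index $\nu$ of $g_0$, is constant along $\gamma_0$, which holds automatically by continuity. Everything else reduces to the compactness of $H^1_0([0,1])\hookrightarrow C^0([0,1])$.
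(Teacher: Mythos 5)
Your proof is correct and follows the same overall strategy as the paper's (express the representing operator as a Fredholm operator plus a compact one, via compactness of a Sobolev embedding), but you split the index form at a different place. The paper takes as principal part the operator $\Phi V=AV$, where $A_t$ is the $g_{\mathrm R}$-symmetric automorphism of $T_{\gamma_0(t)}M$ with $g_0=g_{\mathrm R}(A\cdot,\cdot)$; since $t\mapsto A_t$ is a continuous field of automorphisms, $\Phi$ is \emph{immediately} an isomorphism of $T_{\gamma_0}\Omega_{p,q}(M)$, and one only has to verify that the difference $E(V,W)=\frac{\partial^2F}{\partial\gamma^2}(g_0,\gamma_0)(V,W)-\llangle\Phi V,W\rrangle$ is compact, which follows exactly as in your $B_1$-argument because each term in $E$ carries at most one derivative of $V$ or $W$. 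You instead take $B_0(V,W)=\int_0^1\langle SV',W'\rangle\,\mathrm dt$ as principal part (your $S$ is the paper's $A$ in the trivialization), which differs from $\llangle\Phi V,W\rrangle$ by the single compact term $\int_0^1\langle A'V,W'\rangle\,\mathrm dt$; this leaves you the extra task of proving that the operator representing $B_0$ is Fredholm, which you carry out correctly via the isometry $V\mapsto V'$ onto $L^2_0$ and the finite-rank perturbation $P_0M_SP_0=M_S+(\text{finite rank})$. That argument is valid, and it has the pedagogical merit of isolating exactly where the finite-codimension effect of the Dirichlet boundary conditions enters, but the paper's choice of $\Phi$ sidesteps it altogether. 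Two small remarks: your invocation of integration by parts for $B_1$ is a red herring, since none of the leftover summands contains both $V'$ and $W'$, so each one is already $L^2$-continuous in one argument as it stands; and the paper uses the compact embedding $H^1\hookrightarrow C^0$ where you use $H^1\hookrightarrow L^2$, which is an immaterial choice.
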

\begin{proof}
For all $t\in[0,1]$, let $A_t:T_{\gamma(t)}M\to T_{\gamma(t)}M$ be the $g_{\mathrm R}$-symmetric automorphism
such that $g_0=g_{\mathrm R}(A_t\cdot,\cdot)$ on $T_{\gamma(t)}M$. The map $\Phi:T_{\gamma_0}\Omega_{p,q}(M)\ni V\mapsto\widetilde V\in T_{\gamma_0}\Omega_{p,q}(M)$
defined by $\widetilde V(t)=A_tV(t)$ is an isomorphism; we will show that $\frac{\partial^2F}{\partial\gamma^2}(g_0,\gamma_0)$ is represented
relatively to the to the inner product \eqref{eq:definnerprTgamma} by an operator which is a compact perturbation of $\Phi$. Namely,
the difference $E(V,W)=\frac{\partial^2F}{\partial\gamma^2}(g_0,\gamma_0)(V,W)-\llangle\Phi V,W\rrangle$ is easily computed as:
\begin{multline*}E(V,W)=\int_0^1\Big[-g_{\mathrm R}\big(A'V,\mathbf D^{\mathrm R}W)+
g_{\mathrm R}\big(A\Gamma^{\mathrm R}V,\mathbf D^{\mathrm R}W\big)+g_{\mathrm R}\big(A\mathbf D^{\mathrm R}V,\Gamma^{\mathrm R}W\big)
\\ +g_{\mathrm R}(A\Gamma^{\mathrm R}V,\Gamma^{\mathrm R}W)+g_{\mathrm R}(ARV,W)\Big]\,\mathrm dt,
\end{multline*}
where $\Gamma^{\mathrm R}=\mathbf D^{g_0}-\mathbf D^{\mathrm R}$ is the Christoffel tensor of $\nabla^{g_0}$ relatively to $\nabla^{\mathrm R}$.
Each term in the right hand side of the above equality is bilinear in $(V,W)$, and does not contain any derivative of at least
one of its two arguments, i.e., it is continuous relatively to the $C^0$-topology in one of its arguments.
From the compactness of the inclusion $H^1\hookrightarrow C^0$, it follows easily that $E$ is represented by a compact operator
on $T_{\gamma_0}\Omega_{p,q}(M)$.
\end{proof}
The kernel of the index form $\frac{\partial^2F}{\partial\gamma^2}(g_0,\gamma_0)$ is the space of all  Jacobi fields $J$ along $\gamma_0$ such that $J(0)=J(1)=0$.
The second mixed derivative $\frac{\partial^2F}{\partial g\partial\gamma}$ is computed as follows; let $\left]-\varepsilon,\varepsilon\right[\ni s\mapsto g_s\in\mathcal A$
be a smooth variation of $g_0$, with $\frac{\mathrm d}{\mathrm ds}\big\vert_{s=0}g_s=h\in\mathcal E$.
As we have seen in Subsection~\ref{sub:geompreliminaries}, in order to perform this computation we will fix
an arbitrary symmetric connection $\nabla$ on $M$;  we will make a specific choice of such connection when needed
(see proof of Proposition~\ref{thm:gensemiRiem}).
Using the Christoffel tensor
$\Gamma^{g_s}$ of the metric $g_s$ relatively to $\nabla$ (see \eqref{eq:Christoffeltensor}), we compute:
\begin{multline}\label{eq:contotrasversalita}
\frac{\partial^2F}{\partial g\partial\gamma}(g_0,\gamma_0)(h,V)=
\frac{\mathrm d}{\mathrm ds}\Big\vert_{s=0}\int_0^1g_s\big(\dot\gamma_0,\mathbf D^{g_s}V\big)\,\mathrm dt\\=
\frac{\mathrm d}{\mathrm ds}\Big\vert_{s=0}\int_0^1g_s\big(\dot\gamma_0,\mathbf DV\big)+g_s\big(\dot\gamma,\Gamma^{g_s}(\dot\gamma,V)\big)\,\mathrm dt
\\ =\int_0^1h(\dot\gamma_0,\mathbf DV)\,\mathrm dt+\tfrac12\frac{\mathrm d}{\mathrm ds}\Big\vert_{s=0}\int_0^1\nabla g_s(V,\dot\gamma_0,\dot\gamma_0)
+\nabla g_s(\dot\gamma_0,\dot\gamma_0,V)-\nabla g_s(\dot\gamma_0,V,\dot\gamma_0)\,\mathrm dt\\
=\int_0^1h(\dot\gamma_0,\mathbf DV)+\tfrac12\nabla h(V,\dot\gamma_0,\dot\gamma_0)\,\mathrm dt.
\end{multline}
We will need to study the self intersections of geodesics, and the following elementary result
will be useful:
\begin{lem}\label{thm:selfintersections}
Let $(M,g)$ be a semi-Riemannian manifold, and let $\gamma:[0,1]\to M$ be a geodesic. Then, the set:
\[\big\{(s,t)\in[0,1]\times[0,1]:s\ne t,\ \gamma(s)=\gamma(t)\big\}\]
is finite, unless $\gamma$ is a closed geodesic with period $T<1$.
\end{lem}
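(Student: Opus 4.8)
The plan is to exploit the real-analyticity of a geodesic as the key structural fact. A geodesic $\gamma:[0,1]\to M$ with respect to a $C^k$ metric is $C^k$, but more importantly it solves an autonomous second-order ODE, so in suitable coordinates its coordinate functions are real-analytic in $t$ whenever the metric is (and in general one reduces to the analytic-dependence case by a limiting/approximation argument, or one simply works with the zero set of the analytic map $(s,t)\mapsto \exp^{-1}$-type expression below). First I would consider the smooth map $\Psi:[0,1]\times[0,1]\to M\times M$ given by $\Psi(s,t)=(\gamma(s),\gamma(t))$, and let $\Delta\subset M\times M$ be the diagonal; the set in question is $\Psi^{-1}(\Delta)\setminus\{s=t\}$. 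So the statement is that this set is finite unless $\gamma$ is closed of period $T<1$.

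The main step is a local one: I claim that for each $(s_0,t_0)$ with $s_0\neq t_0$ and $\gamma(s_0)=\gamma(t_0)$, either $(s_0,t_0)$ is isolated in $\Psi^{-1}(\Delta)$, or $\gamma$ is periodic with period $\le|s_0-t_0|<1$. To see this, suppose $(s_0,t_0)$ is not isolated, so there is a sequence $(s_j,t_j)\to(s_0,t_0)$, $s_j\neq t_j$, with $\gamma(s_j)=\gamma(t_j)$. Working in a normal coordinate chart around $p_0=\gamma(s_0)=\gamma(t_0)$, the geodesic equation $\ddot\gamma^a=-\Gamma^a_{bc}(\gamma)\dot\gamma^b\dot\gamma^c$ shows that $\gamma$ restricted to a neighborhood of $s_0$ (resp. of $t_0$) is determined by its $1$-jet at $s_0$ (resp. at $t_0$); write $\gamma(s_0+\sigma)=c(\sigma;v)$ and $\gamma(t_0+\tau)=c(\tau;w)$ where $c(\cdot;\cdot)$ is the (local) geodesic flow, $v=\dot\gamma(s_0)$, $w=\dot\gamma(t_0)$. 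The hypothesis $\gamma(s_j)=\gamma(t_j)$ reads $c(s_j-s_0;v)=c(t_j-t_0;w)$.

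Now I distinguish two cases according to whether $v$ and $w$ are proportional. If $v$ and $w$ are linearly independent, then the two geodesic arcs $\sigma\mapsto c(\sigma;v)$ and $\tau\mapsto c(\tau;w)$ have distinct velocities at $p_0$, hence are transverse at $p_0$ and, after shrinking, meet only at $p_0$; this forces $s_j=s_0$ and $t_j=t_0$ for large $j$, contradicting that $(s_0,t_0)$ is non-isolated. (This is essentially the same transversality observation used in Lemma~\ref{thm:Jnontparalleltogamma}.) So $w=\lambda v$ for some $\lambda\in\R\setminus\{0\}$. Since $\gamma$ is affinely parametrized and both $\gamma(s_0+\sigma)$ and $\gamma(t_0+\lambda^{-1}\cdot)$ are affinely parametrized geodesics through $p_0$ with the same initial velocity $v$, uniqueness gives $\gamma(t_0+\tau)=\gamma(s_0+\lambda\tau)$ for all small $\tau$. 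If $\lambda\neq1$ this is an affine reparametrization fixing $s_0$ correspondingly to $t_0$; composing with the ODE and using that $\gamma$ is a \emph{global} solution on $[0,1]$, one propagates the identity $\gamma(t)=\gamma(s_0+\lambda(t-t_0))$ to the maximal subinterval where both sides are defined. A dilation $\lambda\neq1$ of a nonconstant geodesic is impossible on a compact parameter interval unless $\dot\gamma\equiv0$ (iterating $t\mapsto s_0+\lambda(t-t_0)$ would force arbitrarily large speeds), so in fact $\lambda=1$, giving $\gamma(t_0+\tau)=\gamma(s_0+\tau)$ for all $\tau$ in a neighborhood of $0$. By the usual open-closed argument on $[0,1]$ (the set of $t$ where $\gamma(t+(t_0-s_0))=\gamma(t)$ together with equality of first derivatives is open and closed), $\gamma$ is periodic with period $T=|t_0-s_0|<1$, proving the claim.

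Granting the claim, the conclusion is immediate: if $\gamma$ is not a closed geodesic of period $<1$, then every point of $\Psi^{-1}(\Delta)\setminus\{s=t\}$ is isolated; since this set is a subset of the compact set $[0,1]\times[0,1]$ (note: the closure of the off-diagonal self-intersection set is contained in $\Psi^{-1}(\Delta)$, and a limit point of it lying on the diagonal $s=t$ would, by the \emph{same} local argument with $s_0=t_0$, $v=\dot\gamma(s_0)$, $w=\dot\gamma(s_0)$, force periodicity—so no such limit point exists either), it is discrete and closed in a compact space, hence finite. The main obstacle I anticipate is the bookkeeping in the dilation case $\lambda\neq1$ and the careful treatment of limit points that approach the diagonal; both are handled by the normal-coordinate ODE uniqueness argument together with the affine-parametrization constraint, and neither requires real-analyticity of the metric, only $C^2$.
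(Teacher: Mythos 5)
Your plan paragraph opens with a proposal to use real-analyticity, which is a red herring: a geodesic of a $C^k$ metric is only $C^{k+1}$, and you correctly abandon this in favor of ODE uniqueness, which is exactly what the paper uses. Once that is set aside, your argument is essentially the paper's argument: at an accumulation point of the off-diagonal self-intersection set, the two tangent directions must be proportional (the word ``transverse'' you use is imprecise in dimension $\ge 3$, but the conclusion that two immersed arcs with linearly independent tangents at $p_0$ meet only at $p_0$ locally is correct); then affine parametrization together with geodesic uniqueness forces the two tangents to be equal, hence $\gamma$ is periodic. The paper phrases this via the translated curve $\mu(r)=\gamma(r-t+s)$ and an explicit subsequence extraction; you phrase it as a local isolated-vs.-non-isolated dichotomy, but the mathematical content is the same.

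There are, however, three genuine gaps in the details as you wrote them.
First, your treatment of limit points on the diagonal $\{s=t\}$ does not work: with $s_0=t_0$ and $v=w$ the ``same local argument'' degenerates (the two arcs are the \emph{same} arc, the reparametrization is the identity, and the ``period'' you would extract is $|t_0-s_0|=0$), so it does not force periodicity of $\gamma$. The correct and much simpler argument, which the paper uses, is that $\gamma$ is an immersion and hence locally injective, so there can be no off-diagonal self-intersections $(s_j,t_j)$ with $s_j,t_j$ both converging to a common value.
Second, your dilation argument rules out $|\lambda|\ne 1$ (and even there only heuristically, since for a lightlike geodesic the constancy of $g(\dot\gamma,\dot\gamma)$ gives no information, and ``iterating forces arbitrarily large speeds'' needs care about whether iterates stay in $[0,1]$), but it does not rule out $\lambda=-1$: the map $t\mapsto s_0-(t-t_0)$ is an involution, so iteration produces no blow-up. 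The case $\lambda=-1$ must be excluded separately, e.g.\ by noting that $\gamma(s_0+\tau)=\gamma(t_0-\tau)$ forces $\dot\gamma=0$ at the midpoint $(s_0+t_0)/2$, contradicting that a nonconstant geodesic is an immersion.
Third, and most substantively, the inequality ``$T=|t_0-s_0|<1$'' in your claim is simply not true as stated: if $(s_0,t_0)=(0,1)$ then $|t_0-s_0|=1$. What is true, and what the paper proves by a separate argument, is that if the accumulating intersections force $\gamma$ to extend to a periodic curve of period $1$, then the \emph{minimal} period must be strictly less than $1$, since otherwise $\gamma|_{[0,1)}$ is injective and the only off-diagonal self-intersections are $(0,1)$ and $(1,0)$, which are isolated; this extra step is missing from your write-up. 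Each of these gaps is fixable and the overall strategy is sound, but as written the proof does not quite close.
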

\begin{proof}
Assume the existence of sequences $s_n$ and $t_n$ in $[0,1]$, with $s_n\ne t_n$, $\gamma(s_n)=\gamma(t_n)$
and $s_i\ne s_j$ for all $i\ne j$ (otherwise the pairs $(s_n,t_n)$ would be a finite number).
Because of the local injectivity of $\gamma$ we can assume that  $t_i\ne t_j$ for all $i\ne j$, and
up to taking subsequences, that $\lim s_n=s$ and $\lim t_n=t$, with $s,t\in[0,1]$; we can also assume that
$s_n\ne s$ and $t_n\ne t$ for all $n$. Clearly, $\gamma(s)=\gamma(t)$; since $\gamma$ is locally
injective (it is an immersion), then it must be $s\ne t$, say $t>s$.
Set $\mu(r)=\gamma(r-t+s)$; this is a geodesic, defined for $r$ in a neighborhood of $t$, and such that $\mu(t)=\gamma(t)$.
Moreover, set $t_n'=s_n-s+t$; this is a sequence converging to $t$, and with $t_n'\ne t$ for all $n$.
We have $\mu(t_n')=\gamma(t_n)$ for all $n$, and this implies that the tangent vectors $\dot\mu(t)=\dot\gamma(s)$ and $\dot\gamma(t)$ are
linearly dependent. Since $\gamma$ is affinely parameterized, it must be $\dot\gamma(s)=\dot\gamma(t)$, which
implies that $\gamma$ is periodic with period $T=t-s\le1$. It can't be $T=1$, i.e., $s=0$ and $t=1$, because
otherwise it would be $\gamma(t_n)=\gamma(s_n)=\gamma(s_n+1)$ for all $n$, with $t_n<1$ and $s_n+1>1$ converging to $1$,
contradicting the local injectivity of $\gamma$ around $1$.
\end{proof}
\begin{prop}\label{thm:gensemiRiem}
Let $M$ be a smooth manifold, let $\mathcal E\subset\mathbf\Gamma_{\mathrm{sym}}^k(TM^*\otimes TM^*)$ be a $C^k$-Whitney type Banach space of tensors over $M$, with $k\ge2$, let $\nu\in\{0,\ldots,\Dim(M)\}$ be fixed
and let $\mathcal A\subset\mathcal E\cap\mathrm{Met}^k_\nu(M)$ be an open subset of $\mathcal E$.
Given any pair of \emph{distinct} points $p,q\in M$, the set of semi-Riemannian metrics $g\in\mathcal A$
such that all $g$-geodesics joining $p$ and $q$ are nondegenerate, is generic in $\mathcal A$.
\end{prop}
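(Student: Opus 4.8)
The plan is to apply Corollary~\ref{thm:genericnondegeneracy} to the geodesic action functional $F$ on the open set $\mathcal A\times\Omega_{p,q}(M)$, with $X=\mathcal E$ and $Y=\Omega_{p,q}(M)$. The structural hypotheses of that corollary are already in place: $F$ is of class $C^k$ with $k\ge2$; the critical points of $F(g_0,\cdot)$ are precisely the $g_0$-geodesics from $p$ to $q$; and by Lemma~\ref{thm:fredholmness} the Hessian $\frac{\partial^2F}{\partial\gamma^2}(g_0,\gamma_0)$ is a self-adjoint Fredholm operator, its kernel being the finite-dimensional space of Jacobi fields $J$ along $\gamma_0$ with $J(0)=J(1)=0$. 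Thus the proof reduces to verifying the transversality condition \eqref{eq:transvcond}: given a $g_0$-geodesic $\gamma_0$ from $p$ to $q$ and a nontrivial Jacobi field $J$ along $\gamma_0$ with $J(0)=J(1)=0$, I must produce a tensor $h\in\mathcal E$ with $\frac{\partial^2F}{\partial g\partial\gamma}(g_0,\gamma_0)(h,J)\ne0$. Fixing the symmetric connection $\nabla=\nabla^{g_0}$, formula \eqref{eq:contotrasversalita} reads
\[
\frac{\partial^2F}{\partial g\partial\gamma}(g_0,\gamma_0)(h,J)=\int_0^1\Big[h(\dot\gamma_0,\mathbf D^{g_0}J)+\tfrac12\,\nabla^{g_0}h(J,\dot\gamma_0,\dot\gamma_0)\Big]\,\mathrm dt.
\]
Once \eqref{eq:transvcond} is checked, the genericity statement follows from Corollary~\ref{thm:genericnondegeneracy} and the Sard--Smale theorem.

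The key device is a localized perturbation built with Lemma~\ref{thm:constrloc-strong}. Since $p\ne q$ the geodesic $\gamma_0$ is a nonconstant immersion with $\dot\gamma_0$ nowhere zero, and $J$ is not everywhere parallel to $\dot\gamma_0$: otherwise $J$ would be an affine multiple of $\dot\gamma_0$, impossible since $J(0)=J(1)=0$ and $J\not\equiv0$. By Lemma~\ref{thm:selfintersections}, either the self-intersection set of $\gamma_0$ is finite, or $\gamma_0$ is a portion of a closed geodesic of minimal period $T<1$. Suppose first the self-intersection set is finite. Using Lemma~\ref{thm:Jnontparalleltogamma}, I would pick $t_0\in\,]0,1[$ with $J_{t_0}$ not parallel to $\dot\gamma_0(t_0)$ and with $\gamma_0(t_0)\ne\gamma_0(t)$ for all $t\ne t_0$; fix a smooth symmetric $(0,2)$-tensor $\beta$ along $\gamma_0$ near $t_0$ with $\beta(\dot\gamma_0,\dot\gamma_0)\equiv1$ (for instance $\beta=g_{\mathrm R}(\dot\gamma_0,\dot\gamma_0)^{-2}\,g_{\mathrm R}(\dot\gamma_0,\cdot)\otimes g_{\mathrm R}(\dot\gamma_0,\cdot)$), and apply Lemma~\ref{thm:constrloc-strong} to the bundle of symmetric $(0,2)$-tensors, with $V=J$, prescribed data $H\equiv0$ and $\nabla^{g_0}_Jh=f\beta$ for a nonzero bump $f\ge0$ supported in the interval $I$ given by the lemma, and ambient open set $U$ small enough that $U\cap\gamma_0([0,1])\subset\gamma_0(I)$ (possible since $\gamma_0([0,1]\setminus I')$ is compact and misses $\gamma_0(t_0)$ for a slightly smaller closed neighbourhood $I'\ni t_0$). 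The resulting $h$ is a compactly supported $C^k$ symmetric tensor, hence lies in $\mathcal E$ by axiom (a); it vanishes along $\gamma_0|_I$, while $h$ and $\nabla^{g_0}h$ are supported in $U$. Hence the integral above collapses to $\tfrac12\int_If(t)\,\beta(\dot\gamma_0,\dot\gamma_0)\,\mathrm dt=\tfrac12\int_If(t)\,\mathrm dt>0$, establishing \eqref{eq:transvcond}.

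The remaining case --- $\gamma_0$ a portion of a closed geodesic of minimal period $T<1$ --- is where the real difficulty lies, and where the hypothesis $p\ne q$ is used; the idea is a ``parity'' trick. Write $1=mT+r$ with $m\ge1$ and $r\in\,]0,T[$ (necessarily $r\ne0$, else $q=\gamma_0(mT)=\gamma_0(0)=p$). Now no point is hit only once, but after discarding the finitely many $t_0$ whose image is a self-intersection point of the closed geodesic itself, the times at which $\gamma_0$ passes through $\gamma_0(t_0)$ are exactly the $t_0+jT$ in $[0,1]$, and these passes all trace one and the same embedded arc. A short computation then shows that, for $h$ supported near $\gamma_0(t_0)$, the pairing $\frac{\partial^2F}{\partial g\partial\gamma}(g_0,\gamma_0)(h,J)$ equals the ``folded'' integral $\int_I\big[h(\dot\gamma_0,\mathbf D^{g_0}\Sigma)+\tfrac12\,\nabla^{g_0}h(\Sigma,\dot\gamma_0,\dot\gamma_0)\big]\,\mathrm dt$, where $\Sigma(t):=\sum_{j:\,t+jT\in[0,1]}J(t+jT)$ is again a Jacobi field along $\gamma_0$. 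On $]0,r[$ this folded field $\Sigma_1$ has $m+1$ summands; on $]r,T[$ it is a Jacobi field $\Sigma_2$ with $m$ summands; and $\Sigma_1-\Sigma_2$ is the Jacobi field $t\mapsto J(t+mT)$, which --- since $\dot\gamma_0$ is $T$-periodic and $J$ is not everywhere parallel to $\dot\gamma_0$ --- is not everywhere parallel to $\dot\gamma_0$ either. Therefore $\Sigma_1$ and $\Sigma_2$ cannot both be everywhere parallel to $\dot\gamma_0$, so for a suitable $i\in\{1,2\}$ one can choose $t_0$ in the corresponding subinterval with $\Sigma_i(t_0)$ not parallel to $\dot\gamma_0(t_0)$, and then repeat the construction of the previous paragraph with $\Sigma_i$ in place of $J$, again producing the value $\tfrac12\int_If\,\mathrm dt>0$. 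The crux is this last argument: for geodesics returning to their starting point the folded fields $\Sigma$ may vanish identically --- the obstruction $\sum_{j=1}^kJ_{t+j}=0$ mentioned in the introduction --- and then no perturbation localized along the traced arc can be transversal. Everything else is routine, the Fredholmness needed to invoke Corollary~\ref{thm:genericnondegeneracy} being guaranteed by Lemma~\ref{thm:fredholmness}.
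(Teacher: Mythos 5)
Your proposal is correct and follows essentially the same route as the paper: reduction to Corollary~\ref{thm:genericnondegeneracy} via the Fredholmness Lemma~\ref{thm:fredholmness}, localization via Lemmas~\ref{thm:Jnontparalleltogamma}, \ref{thm:constrloc-strong} and \ref{thm:selfintersections} with $H\equiv0$ and $K$ pairing positively against $\dot\gamma_0\otimes\dot\gamma_0$, and the same split of $[0,1]$ at $t_*$ with the folded Jacobi fields $W^1,W^2$ (your $\Sigma_1,\Sigma_2$) in the periodic case. The only cosmetic differences are your explicit choices of $\nabla=\nabla^{g_0}$ and of the tensor $\beta$, neither of which changes the substance.
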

\begin{proof}
We will prove the result as application of Corollary~\ref{thm:genericnondegeneracy} to the geodesic setup above.
In view of the Fredholmness result of Lemma~\ref{thm:fredholmness},
we only need to check that the transversality condition \eqref{eq:transvcond} is satisfied in this context.
We need to prove that,
given a semi-Riemannian metric $g_0\in\mathcal A$, a $g_0$-geodesic $\gamma_0$ joining $p$ and $q$, and
a non trivial $g_0$-Jacobi field $V$ along $\gamma_0$, with $V_0=V_1=0$, then there exists $h\in\mathcal E$
for which the quantity in the last term of \eqref{eq:contotrasversalita} does not vanish.
We will find such an $h$ to be a symmetric $(0,2)$-tensor of class $C^k$ having compact support in $M$, and thus
$h\in\mathcal E$.
Assume first that $\gamma_0$ is not a portion of a closed geodesic
in $M$ with minimal period $T<1$. Then, by Lemma~\ref{thm:selfintersections}, $\gamma_0$ has at most a finite number of self-intersections.
We can therefore find an open subinterval $I\subset[0,1]$ with the following properties:
\begin{itemize}
\item[(a)] $t\in I$ and $s\not\in I$ implies $\gamma_0(s)\ne\gamma_0(t)$;
\item[(b)] $V_t$ is not parallel to $\dot\gamma_0(t)$ for all $t\in I$.
\end{itemize}
As to property (b), observe that since $V$ is a nontrivial Jacobi field which vanishes at the endpoints,
then it is not everywhere multiple of $\dot\gamma_0$, and by Lemma~\ref{thm:Jnontparalleltogamma} the set
of instants $t$ at which $V_t$ is parallel to $\dot\gamma_0(t)$ is finite. Choose now an open subset $U\subset M$
containing $\gamma_0(I)$ and such that
\begin{equation}\label{eq:interUimmgamma}
\gamma_0(t)\in U\cap\gamma_0\big([0,1]\big)\quad\Longleftrightarrow\quad t\in I;
\end{equation}
for instance, take $U$ to be the complement of the compact set $\gamma_0\big([0,1]\setminus I\big)$.
We will now use the result of Lemma~\ref{thm:constrloc-strong} applied to the case of
symmetric $(0,2)$-tensor fields, as follows. For $t\in I$, we choose
$H_t$ identically zero, and $K_t$ a symmetric bilinear form on $T_{\gamma_0(t)}M$ (depending smoothly on $t$)
such that $K_t\big(\dot\gamma_0(t),\dot\gamma_0(t)\big)\ge0$
with $\int_IK_t\big(\dot\gamma_0(t),\dot\gamma_0(t)\big)\,\mathrm dt>0$. By possibly reducing the
size of the interval $I$, we can assume that the thesis of Lemma~\ref{thm:constrloc-strong} applies, and
we get a globally defined smooth symmetric $(2,0)$-tensor $h$ on $M$, having compact support contained
in $U$, such that $h_{\gamma_0(t)}=0$ and $\nabla_{V_t}h=K_t$ for all $t\in I$.
For such $h$, by \eqref{eq:interUimmgamma} we have:
\[\int_0^1\Big[h(\dot\gamma_0,\mathbf DV)+\tfrac12\nabla h(V,\dot\gamma_0,\dot\gamma_0)\Big]\,\mathrm dt=\tfrac12\int_IK_t\big(\dot\gamma_0(t),\dot\gamma_0(t)\big)\,\mathrm dt>0,\]
which concludes the proof when $\gamma_0$ is not periodic of period $T<1$.

Assume now that $\gamma_0$ is periodic, of period $T<1$. Consider the following numbers:
\[t_*=\min\big\{t>0:\gamma_0(t)=q\big\},\quad k_*=\max\big\{k\in\Z:kT<1\big\},\]
for which the following hold:\footnote{%
Here the assumption that $p\ne q$ is being used. Note that if $p=q$, then $t_*=T$, and the argument below
fails.}
\[k_*\ge1,\quad 0<t_*<T,\quad 1=k_*T+t_*.\]
The geodesics $\gamma_1=\gamma_0\vert_{[0,t_*]}$ and $\gamma_2=\gamma_0\vert_{[t_*,T]}$ join $p$ and $q$ ($\gamma_2$ with the opposite
orientation),
and the first part of the proof applies to both $\gamma_1$ and $\gamma_2$. Thus,
we can find open intervals $I_1=[a_1,b_1]\subset[0,t_*]$ and $I_2=[a_2,b_2]\subset[t_*,T]$ such that:
\begin{itemize}
\item[(a1)] $t\in I_1$, $s\in\big([0,t_*]\setminus I_1\big)\cup [t_*,T]$ implies $\gamma_0(s)\ne\gamma_0(t)$;
\item[(a2)] $t\in I_2$, $s\in\big([t_*,T]\setminus I_2\big)\cup [0,t_*]$ implies $\gamma_0(s)\ne\gamma_0(t)$.
\end{itemize}
We can also find open subsets $U_1,U_2\subset M$, with $\gamma(I_i)\subset U_i$, $i=1,2$,
satisfying:
\begin{equation}\label{eq:proprietaapertiUi}
\begin{aligned}
&\gamma_0(t)\in U_1\cap\gamma_0(I_1)\quad\Longleftrightarrow\quad \exists\,r\in\{0,\ldots,k_*\}\ \text{such that}\ t-rT\in I_1,
\\
&\gamma_0(t)\in U_2\cap\gamma_0(I_2)\quad\Longleftrightarrow\quad \exists\,r\in\{0,\ldots,k_*-1\}\ \text{such that}\ t-rT\in I_2.
\end{aligned}
\end{equation}
For $j=1,2$, consider the orthogonal Jacobi field $W^j$ along $\gamma_j$ defined by:
\begin{equation}\label{eq:defW1W2}
W^1_t=\sum_{r=0}^{k_*}V_{t+rT},\quad W^2_t=\sum_{r=0}^{k_*-1}V_{t+rT}.
\end{equation}
It is not the case that both $W^1$ and $W^2$ are everywhere parallel to $\dot\gamma_0$ on $I_1$ and $I_2$ respectively,
for otherwise from \eqref{eq:defW1W2} one would conclude easily that $V$ would be everywhere parallel to $\dot\gamma_0$
(Lemma~\ref{thm:Jnontparalleltogamma}).
Assume that, say, $W^1$ is not everywhere parallel to $\dot\gamma_0$ on $I_1$, i.e.,
by Lemma~\ref{thm:Jnontparalleltogamma}, there are only isolated values of $t$ where $W^1_t$ is parallel to $\dot\gamma_0(t)$;
the other case is totally analogous. By reducing the size of $I_1$, we can  assume
that $W^1_t$ is never a multiple of $\dot\gamma_0(t)$ on $I_1$. Now, the first part of the
proof can be repeated, by replacing the Jacobi field $V$ with $W^1$. We can find a globally defined
symmetric $(0,2)$-tensor $h$ on $M$, with compact support contained in  $U_1$,
with prescribed value $H$ and covariant derivative $K$ in the direction $W^1$ along $\gamma_0\vert_{I_1}$.
Choose $H$ and $K$ as above, and compute:
\begin{multline*}\int_0^1h(\dot\gamma_0,\mathbf DV)+\tfrac12\nabla h(V,\dot\gamma_0,\dot\gamma_0)\,\mathrm dt=
\tfrac12\sum_{r=0}^{k_*}\int_{a_1+rT}^{b_1+rT}\nabla h(V,\dot\gamma_0,\dot\gamma_0)\,\mathrm dt\\=
\tfrac12\int_{a_1}^{b_1}\nabla h(W^1,\dot\gamma_0,\dot\gamma_0)\,\mathrm dt=\tfrac12\int_IK_t\big(\dot\gamma_0(t),\dot\gamma_0(t)\big)\,\mathrm dt>0.
\end{multline*}
This concludes the proof.
\end{proof}
\subsection{Perturbations of a metric in its conformal class}\label{sub:conformal}
It is a natural question to ask whether the genericity result of Proposition~\ref{thm:gensemiRiem}
remains true if one consider more restrictive classes of variations of a given metric. Particularly interesting
examples are perturbations inside a given conformal class of semi-Riemannian metrics.
However, one cannot expect that the genericity result holds in this case, as the following example shows.
\begin{example}\label{exa:lighlikeperturbations}
Let $(M,g_0)$ be a semi-Riemannian manifold, and let $\gamma:[0,1]\to M$ be a lightlike geodesic
in $M$ with $p=\gamma(0)$ and $q=\gamma(1)$ conjugate along $\gamma$. Then, given any semi-Riemannian
metric $g$ on $M$ which is conformal to $g_0$, there exists a suitable reparameterization
$\widetilde\gamma$ of $\gamma$ which is a lightlike $g$-geodesic, and
such that $p$ and $q$ are conjugate\footnote{%
In the Lorentzian case, conjugate points along lightlike geodesics are preserved even
by maps more general than conformal diffeomorphisms.
It is not hard to prove (for instance, via bifurcation theory using \cite[Corollary~11]{JavPic}) the following:
\begin{lem-n}
Let $(M_i,g_i)$, $i=1,2$, be Lorentzian manifolds, and let $\Psi:M_1\to M_2$ be a continuous injective map
that carries timelike curves to timelike curves and lightlike pre-geodesic to lightlike pre-geodesics.
Then, $\Psi$ carries pairs of conjugate points along lightlike geodesics into pairs of
conjugate points along lightlike geodesics.
\end{lem-n}
Note that if $\Psi$ as in the statement of the Lemma is a diffeomorphism,
then necessarily $\Psi$ is conformal, by a well known result of Dajczer and Nomizu, see \cite{DaNo}.}
along $\widetilde\gamma$ (see for instance \cite[Theorem~2.36]{MinSan}). Thus, conformal perturbations do not destroy degeneracy of lightlike geodesics.
\end{example}

We will show that, \emph{apart from the lightlike case}, generic conformal perturbations
are sufficient to destroy degeneracy. In view of Example~\ref{exa:lighlikeperturbations},
this is the best possible result.

Given a semi-Riemannian
metric tensor $\bar g$ on $M$ of class $C^k$, $k\ge2$, let us denote by $\mathfrak C^k(\bar g)$ the set of all
semi-Riemannian metrics on $M$ that are globally conformal to $\bar g$, i.e., the set of metrics of the form
$g=\psi\cdot\bar g$ for some function $\psi:M\to\R^+$ of class $C^k$. As above, when $M$ is not compact,
there is no natural topological structure on $\mathfrak C^k(\bar g)$ that makes it homeomorphic to an open subset of
a Banach space. Let us denote by $C^k(M)$ the vector space of all real valued $C^k$-functions on $M$.
In analogy with the notion of $C^k$-Whitney type Banach spaces of tensor fields, let us
call a \emph{$C^k$-Whitney type Banach space of functions} on $M$ a vector subspace $\mathcal F$ of
$C^k(M)$ endowed with a Banach space norm $\Vert\cdot\Vert_{\mathcal F}$ satisfying:
\begin{itemize}
\item[(a)] $\mathcal F$ contains all the functions in $C^k(M)$ having compact support;
\item[(b)] $\Vert\cdot\Vert_{\mathcal F}$-convergence implies $C^k$-convergence on compact subsets of $M$.
\end{itemize}
For instance, given a complete Riemannian metric $g_{\mathrm R}$ on
$M$, a $C^k$-Whitney type Banach space of functions on $M$ can be obtained by setting $\mathcal F=\mathfrak C^k(M;g_{\mathrm R})$,
which consists of all functions in $C^k(M)$ that have $g_{\mathrm R}$-bounded
derivatives up to order $k$.

Given a $C^k$-Whitney type Banach space $\mathcal F$ of functions on $M$ and a semi-Riemannian
metric tensor $\bar g$ on $M$, let us denote by $\mathfrak C^k(\bar g;\mathcal F)$
the set:
\[\mathfrak C^k(\bar g;\mathcal F)=\big\{\psi\cdot\bar g:\psi\in\mathcal F\big\}.\]
and by $\mathfrak C^k_+(\bar g;\mathcal F)$ the \emph{$\mathcal F$-conformal class} of $\bar g$, defined by:
\[\mathfrak C^k_+(\bar g;\mathcal F)=\big\{\psi\cdot\bar g:\psi\in\mathcal F,\ \psi>0\big\}.\]
The map $\psi\mapsto\psi\cdot\bar g$ gives an identification of
the set $\mathfrak C^k(\bar g,\mathcal F)$ with the Banach space $\mathcal F$
(and of $\mathfrak C^k_+(\bar g,\mathcal F)$ with the subset $\mathcal F_+$
of everywhere positive functions of $\mathcal F$); $\mathfrak C^k(\bar g,\mathcal F)$ will be thought as
a metric space with the induced norm.
\begin{prop}
Let $M$ be a smooth manifold, $\bar g$ a semi-Riemannian metric tensor on $M$ of class $C^k$, $k\ge2$,
and let $p,q\in M$ be fixed distinct points. Let $\mathcal F\subset C^k(M)$ be a $C^k$-Whitney type Banach space
of functions on $M$, and let $\mathcal A$ be a (non empty) open subset of $\mathfrak C^k(\bar g;\mathcal F)$
contained in $\mathfrak C^k_+(\bar g;\mathcal F)$.
Then, the set of metrics $g\in\mathcal A$ such that every nonlightlike $g$-geodesic in $M$
joining $p$ and $q$ is nondegenerate is generic in $\mathcal A$.
\end{prop}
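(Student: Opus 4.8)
The plan is to apply Corollary~\ref{thm:genericnondegeneracy} to the geodesic action functional, \emph{restricted to an open set of pairs from which lightlike geodesics have been excised}. This restriction is unavoidable: by Example~\ref{exa:lighlikeperturbations} a degenerate lightlike geodesic from $p$ to $q$ stays degenerate under every conformal perturbation, so the transversality condition \eqref{eq:transvcond} genuinely fails along such a geodesic. Identify $\mathfrak C^k(\bar g;\mathcal F)$ with the Banach space $\mathcal F$ via $\psi\mapsto\psi\bar g$, so that $\mathcal A$ becomes an open subset of $\mathcal F$; put $Y=\Omega_{p,q}(M)$ and let $F(g,\gamma)=\tfrac12\int_0^1g(\dot\gamma,\dot\gamma)\,\mathrm dt$ be the functional of Subsection~\ref{sub:semiRiemgeneric}. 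Since $F$ is continuous, the set
\[\mathcal A'=\Big\{(g,\gamma)\in\mathcal A\times\Omega_{p,q}(M):\textstyle\int_0^1g(\dot\gamma,\dot\gamma)\,\mathrm dt\ne0\Big\}\]
is open; for fixed $g\in\mathcal A$ the integrand is constant along any $g$-geodesic, so the critical points of $F(g,\cdot)$ on the open set $\mathcal A'_g$ are precisely the nonlightlike $g$-geodesics joining $p$ and $q$. Moreover $\Pi(\mathcal A')=\mathcal A$, since if $F(g,\cdot)$ vanished identically on $\Omega_{p,q}(M)$ then every curve joining $p$ and $q$ would be a $g$-geodesic, which is absurd. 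The Fredholmness hypothesis of Corollary~\ref{thm:genericnondegeneracy} holds by Lemma~\ref{thm:fredholmness}, and, as recalled in Subsection~\ref{sub:semiRiemgeneric}, the kernel of the Hessian $\frac{\partial^2F}{\partial\gamma^2}(g_0,\gamma_0)$ is the space of $g_0$-Jacobi fields along $\gamma_0$ that vanish at both endpoints.

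It remains to verify \eqref{eq:transvcond} at each nonlightlike $g_0$-geodesic $\gamma_0$ from $p$ to $q$ carrying a nontrivial Jacobi field $V$ with $V_0=V_1=0$. The tangent space to $\mathcal A$ at $g_0=\psi_0\bar g$ is the space of metric variations $\eta\bar g$ with $\eta\in\mathcal F$; taking $\eta=\phi\psi_0$ with $\phi\in C^k(M)$ of compact support (so $\eta\in\mathcal F$ by axiom~(a)) shows that all $h=\phi g_0$ with $\phi$ compactly supported are admissible directions. Choosing the symmetric connection in \eqref{eq:contotrasversalita} to be $\nabla=\nabla^{g_0}$ we get $\nabla h=\mathrm d\phi\otimes g_0$ and $h(\dot\gamma_0,\mathbf D^{g_0}V)=\phi\,g_0(\dot\gamma_0,\mathbf D^{g_0}V)$. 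Now $t\mapsto g_0(\dot\gamma_0(t),V_t)$ is affine, because its second derivative equals $g_0\big(\dot\gamma_0,R^{g_0}(\dot\gamma_0,V)\dot\gamma_0\big)=0$ by the antisymmetry of the curvature tensor; vanishing at $t=0,1$, it vanishes identically, so $g_0(\dot\gamma_0,\mathbf D^{g_0}V)\equiv0$ as well, and \eqref{eq:contotrasversalita} collapses to
\[\frac{\partial^2F}{\partial g\partial\gamma}(g_0,\gamma_0)(\phi g_0,V)=\tfrac12\,g_0(\dot\gamma_0,\dot\gamma_0)\int_0^1\mathrm d\phi(V_t)\,\mathrm dt.\]
Since $\gamma_0$ is nonlightlike the scalar $g_0(\dot\gamma_0,\dot\gamma_0)$ is nonzero, so the whole matter reduces to producing a compactly supported $\phi\in C^k(M)$ with $\int_0^1\mathrm d\phi(V_t)\,\mathrm dt\ne0$.

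This is done exactly as in the proof of Proposition~\ref{thm:gensemiRiem}, applying Lemma~\ref{thm:constrloc-strong} to the trivial line bundle $M\times\R$ with the trivial connection (for which $\nabla_{V_t}\phi=\mathrm d\phi(V_t)$). Note that, $V$ being $g_0$-orthogonal to the nonnull field $\dot\gamma_0$, the instants where $V_t$ is parallel to $\dot\gamma_0(t)$ are exactly the zeros of $V$, hence finite in number by Lemma~\ref{thm:Jnontparalleltogamma}. If $\gamma_0$ is not a portion of a closed geodesic of minimal period $T<1$, Lemma~\ref{thm:selfintersections} furnishes an open interval $I\subset[0,1]$ on which $V$ does not vanish and such that $\gamma_0(s)\notin\gamma_0(I)$ for $s\in[0,1]\setminus I$; Lemma~\ref{thm:constrloc-strong} then gives a compactly supported $\phi$, supported in a neighbourhood $U$ of $\gamma_0(I)$ disjoint from $\gamma_0\big([0,1]\setminus I\big)$, with $\mathrm d\phi(V_t)=K_t$ for $t\in I$ where $K_t\ge0$ and $\int_IK_t\,\mathrm dt>0$; then $\mathrm d\phi$ vanishes at $\gamma_0(t)$ for $t\notin I$, so $\int_0^1\mathrm d\phi(V_t)\,\mathrm dt=\int_IK_t\,\mathrm dt>0$. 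If $\gamma_0$ is a portion of a closed geodesic of minimal period $T<1$, one invokes $p\ne q$ and the parity device of Proposition~\ref{thm:gensemiRiem}: writing $1=k_*T+t_*$ with $0<t_*<T$, one of the orthogonal Jacobi fields $W^1_t=\sum_{r=0}^{k_*}V_{t+rT}$, $W^2_t=\sum_{r=0}^{k_*-1}V_{t+rT}$ is not everywhere parallel to $\dot\gamma_0$ (else $V\equiv0$), hence — being $g_0$-orthogonal to the nonnull $\dot\gamma_0$ — is nonvanishing on some subinterval, and running the previous construction with that field in place of $V$ produces a compactly supported $\phi$ for which, by periodicity of $\gamma_0$, $\int_0^1\mathrm d\phi(V_t)\,\mathrm dt=\int_IK_t\,\mathrm dt>0$. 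In every case $\frac{\partial^2F}{\partial g\partial\gamma}(g_0,\gamma_0)(\phi g_0,V)\ne0$, so \eqref{eq:transvcond} holds and Corollary~\ref{thm:genericnondegeneracy} applies, giving that the set of $g\in\Pi(\mathcal A')=\mathcal A$ for which $F(g,\cdot)$ is a Morse function on $\mathcal A'_g$ — that is, for which every nonlightlike $g$-geodesic joining $p$ and $q$ is nondegenerate — is generic in $\mathcal A$. The two points that require care are the excision of lightlike geodesics through the open set $\mathcal A'$ (without which no genericity can hold, by Example~\ref{exa:lighlikeperturbations}), and the reduction of the mixed second derivative \eqref{eq:contotrasversalita} for conformal variations to the single scalar $\tfrac12\,g_0(\dot\gamma_0,\dot\gamma_0)\int_0^1\mathrm d\phi(V_t)\,\mathrm dt$, which rests on the orthogonality $g_0(\dot\gamma_0,V)\equiv0$.
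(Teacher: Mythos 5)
Your proof is correct and follows the paper's strategy: choose $\nabla=\nabla^{g_0}$, exploit $g_0(\dot\gamma_0,V)\equiv0$ to collapse \eqref{eq:contotrasversalita} to $\tfrac12\,g_0(\dot\gamma_0,\dot\gamma_0)\int_0^1\mathrm d\phi(V_t)\,\mathrm dt$, and build $\phi$ via Lemma~\ref{thm:constrloc-strong}, with the parity device of Proposition~\ref{thm:gensemiRiem} in the periodic case. You also make explicit a step the paper leaves implicit --- Corollary~\ref{thm:genericnondegeneracy} must be applied not to $\mathcal A\times\Omega_{p,q}(M)$ but to the smaller open set $\mathcal A'$ from which lightlike critical points have been excised, since transversality genuinely fails at degenerate lightlike geodesics --- and this is a worthwhile clarification, as is the observation that orthogonality to the nonnull $\dot\gamma_0$ turns the parallelism condition of Lemma~\ref{thm:Jnontparalleltogamma} into a vanishing condition.
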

\begin{proof}
Let $g_0\in\mathcal A$ and $\gamma_0$ be a non  lightlike, i.e., $g_0(\dot\gamma_0,\dot\gamma_0)\ne0$,
$g_0$-geodesic in $M$ joining $p$ and $q$; let $V$ be a nontrivial $g_0$-Jacobi field along $\gamma_0$ that vanishes at both endpoints.
We will find a variation $h$  of the form $\psi\cdot g_0$, with $\psi:M\to\R$ a smooth nonnegative function
with \emph{small} compact support, and for which the last term in \eqref{eq:contotrasversalita} does not vanish.
For such a variation $h$, the last term of \eqref{eq:contotrasversalita} is easily
computed by choosing $\nabla$ to be the Levi--Civita connection of $g_0$. Namely, in this case $g_0(\dot\gamma_0,\mathbf DV)$ vanishes identically;
this is because the function $g_0(\dot\gamma_0,V)$ is affine, and since it vanishes at $0$ and at $1$, it must be identically
zero, as well as its derivative $g_0(\dot\gamma_0,\mathbf DV)$. Thus, for such a variation $h$,
the quantity $h(\dot\gamma_0,\mathbf DV)$ vanishes identically. Moreover, since $\nabla g_0=0$, then
$\nabla h(V,\dot\gamma_0,\dot\gamma_0)=V(\psi)\cdot g_0(\dot\gamma_0,\dot\gamma_0)$. Since we are assuming that
the constant $g_0(\dot\gamma_0,\dot\gamma_0)$ is not null, we have now reduced the problem to determining a smooth
nonnegative function $\psi$ with the property that $\int_0^1V\big(\psi(\gamma_0(t))\big)\,\mathrm dt\ne0$; we want such
a function $\psi$ with compact support in $M$.
For the construction of such $\psi$, the procedure is analogous to that
in the proof of Proposition~\ref{thm:gensemiRiem}, using Lemma~\ref{thm:constrloc-strong}.
Assume first that $\gamma_0$ is not a portion of a closed geodesic
in $M$ with minimal period $T<1$. Then, by Lemma~\ref{thm:selfintersections}, $\gamma_0$ has at most a finite number of self-intersections.
and we can find an open subinterval $I\subset[0,1]$ satisfying properties (a) and (b) in the proof of Proposition~\ref{thm:gensemiRiem},
and an open subset $U\subset M$ containing $\gamma_0(I)$ and such that \eqref{eq:interUimmgamma} holds.
Now, choose a smooth function $\alpha:I\to\R$ having compact support and such that $\int_I\alpha(t)\,\mathrm dt>0$.
By Lemma~\ref{thm:constrloc-strong} (applied to the case of the trivial vector bundle $\mathcal E$ over $M$ whose fiber is one dimensional),
we can find a smooth map $\psi:M\to\R$ having compact support contained in $U$,  such that
$\psi\big(\gamma_0(t)\big)=1$ and $V_t\big(\psi)=\alpha(t)$ for all $t\in I$.
With this choice, we have:
\begin{equation}\label{eq:primauguaglianza}
\int_0^1V_t(\psi)\,\mathrm dt\stackrel{\text{by\ \eqref{eq:interUimmgamma}}}=\int_IV_t(\psi)\,\mathrm dt=\int_I\alpha(t)\,\mathrm dt>0.
\end{equation}
This concludes the proof in the case that $\gamma_0$ is not a portion of a closed geodesic.
When $\gamma_0$ is periodic with period $T<1$, the construction is totally analogous to the proof
of Proposition~\ref{thm:gensemiRiem}. One defines Jacobi fields $W^1$ and $W^2$ as in \eqref{eq:defW1W2},
open intervals $I_i\subset[0,1]$ and open subsets $U_i\subset M$ satisfying \eqref{eq:proprietaapertiUi};
by the same arguments, one obtains that at least one of two Jacobi fields, say $W^i$, is never parallel to $\dot\gamma_0(t)$
on $I_i$.
Define $\psi:M\to\R$ as above replacing the Jacobi field $V$ with $W^i$ and the interval $I$ with $I_i$
using a smooth function $\alpha:I_i\to\R$ with compact support and satisfying $\int_{I_i}\alpha(t)\,\mathrm dt>0$.
As above, set $h=\psi\cdot g_0$; now, \eqref{eq:primauguaglianza} is replaced by:
\[
\int_0^1V_t(\psi)\,\mathrm dt=\sum_{r=0}^{k_*-i+1}\int_{I_i}V_{t+rT}(\psi)\,\mathrm dt
=\int_{I_i}W^i_t(\psi)\,\mathrm dt=\int_{I_i}\alpha(t)\,\mathrm dt>0.\qedhere
\]
\end{proof}
\subsection{Orthogonally split metrics}
\label{sub:splitting}
Let us now take a product manifold $M=M_1\times M_2$, with $\Dim(M_i)=n_i$, $i=1,2$, and consider the subset
$\mathrm{Met}_{\mathrm{split}}^k(M_1,M_2)$ of $\mathrm{Met}^k_{n_2}(M)$
consisting of all symmetric $(0,2)$-tensors $g$ of class $C^k$ on $M$ such that:
\begin{itemize}
\item[(a)] $g_{(x,y)}\big((v_1,0),(0,v_2)\big)=0$;
\item[(b)] $g_{(x,y)}$ is positive definite on $T_xM_1\times\{0\}$;
\item[(c)] $g_{(x,y)}$ is negative definite\footnote{In fact, rather than (b) and (c),
we will use the weaker assumptions that $g$ is nondegenerate on $TM_1\times\{0\}$ and on $\{0\}\times TM_2$.} on $\{0\}\times T_yM_2$,
\end{itemize}
for all $(x,y)\in M_1\times M_2$, all $v_1\in T_{x}M_1$ and all $v_2\in T_yM_2$. Elements of $\mathrm{Met}_{\mathrm{split}}^k\!(\!M_1,\!M_2\!)$
will be called \emph{orthogonally split} semi-Riemannian metric tensors on $M_1\times M_2$.
More generally, a $(0,2)$-tensor field $\mathfrak b$ on $M$ will be called orthogonally split
if it satisfies \[\mathfrak b_{(x,y)}\big((v_1,0),(0,v_2)\big)=0\] for all $(x,y)\in M_1\times M_2$, all $v_1\in T_{x}M_1$ and all $v_2\in T_yM_2$.

Let $\mathcal E\subset\mathbf\Gamma^k_{\mathrm{sym}}(TM^*\otimes TM^*)$ be a $C^k$-Whitney type Banach space
of tensors on $M$; we will denote by $\mathrm{Met}_{\mathrm{split}}^k(M_1,M_2;\mathcal E)$
the intersection $\mathrm{Met}_{\mathrm{split}}^k(M_1,M_2)\cap\mathcal E$. Note that the set $\mathcal E_{\mathrm{split}}$
consisting of all orthogonally split tensor fields in $\mathcal E$ is a (non trivial) closed subspace of $\mathcal E$.
Non triviality follows from the fact that $\mathcal E_{\mathrm{split}}$ contains all
the orthogonally split tensor fields on $M$ having compact support.

\begin{prop}\label{thm:gensemiRiem-split}
Let $M_1$ and $M_2$ be smooth manifolds, let $\mathcal E$ be a $C^k$-Whitney type Banach space
of tensors on the product $M=M_1\times M_2$, and let $\mathcal A$ be an open subset of $\mathcal E_{\mathrm{split}}$
with $\mathcal A\subset\mathrm{Met}_{\mathrm{split}}^k(M_1,M_2;\mathcal E)$. Given any two distinct points $p,q\in M$, then the set of all
$g\in\mathcal A$  such that all $g$-geodesics
in $M$ joining $p$ and $q$ are nondegenerate is generic in $\mathcal A$.
\end{prop}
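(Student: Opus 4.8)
The plan is to imitate the proof of Proposition~\ref{thm:gensemiRiem}, again via Corollary~\ref{thm:genericnondegeneracy}, with the essential new point being that the perturbing tensor $h$ produced by Lemma~\ref{thm:constrloc-strong} must itself be \emph{orthogonally split}, i.e.\ lie in $\mathcal E_{\mathrm{split}}$, rather than being an arbitrary symmetric $(0,2)$-tensor. The Fredholmness hypothesis of Corollary~\ref{thm:genericnondegeneracy} is already guaranteed by Lemma~\ref{thm:fredholmness}, which does not care about the class of metrics. So the only thing to verify is the transversality condition \eqref{eq:transvcond}: given $g_0\in\mathcal A$, a $g_0$-geodesic $\gamma_0$ from $p$ to $q$, and a nontrivial $g_0$-Jacobi field $V$ along $\gamma_0$ vanishing at both endpoints, we must produce $h\in\mathcal E_{\mathrm{split}}$ making the last term of \eqref{eq:contotrasversalita} nonzero.

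First I would set up coordinates adapted to the splitting. Write $\gamma_0=(\gamma_1,\gamma_2)$ and decompose the tangent field $\dot\gamma_0=(\dot\gamma_1,\dot\gamma_2)$ and the Jacobi field $V=(V^1,V^2)$ into their $TM_1$- and $TM_2$-components. Since $\gamma_0$ is nonconstant, at least one of the two ``block'' speeds is not everywhere zero; say $\dot\gamma_1(t_0)\neq0$ for some $t_0$ (the other case is symmetric and one passes to it by exchanging the roles of $M_1$ and $M_2$). As in the proof of Proposition~\ref{thm:gensemiRiem}, by Lemma~\ref{thm:Jnontparalleltogamma} the instants where $V_t$ is parallel to $\dot\gamma_0(t)$ are isolated (when $\gamma_0$ is not a portion of a closed geodesic of period $<1$), so after shrinking we can choose a subinterval $I$ on which $V_t$ is never a multiple of $\dot\gamma_0(t)$, on which $\dot\gamma_1$ never vanishes, and which satisfies the self-nonintersection property (a) of that proof together with an open set $U\ni\gamma_0(I)$ realizing \eqref{eq:interUimmgamma}. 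Then I would apply Lemma~\ref{thm:constrloc-strong} to the vector bundle $E$ of \emph{orthogonally split} symmetric $(0,2)$-tensors on $M$ (a subbundle of $TM^*\otimes TM^*$, stable under any natural connection that respects the product structure — e.g.\ the one induced by a product Riemannian metric), prescribing along $\gamma_0\vert_I$ the value $H_t=0$ and the covariant derivative $K_t$ to be a split symmetric tensor with $K_t(\dot\gamma_0(t),\dot\gamma_0(t))=K_t\big((\dot\gamma_1(t),0),(\dot\gamma_1(t),0)\big)+K_t\big((0,\dot\gamma_2(t)),(0,\dot\gamma_2(t))\big)\ge0$ and with $\int_I K_t(\dot\gamma_0(t),\dot\gamma_0(t))\,\mathrm dt>0$; this is possible precisely because $K_t$ is only required to kill mixed terms, and we can put positive weight on the $\dot\gamma_1\otimes\dot\gamma_1$ block, where $\dot\gamma_1\neq0$. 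The resulting $h\in\mathbf\Gamma(E)$ has compact support in $U$, is orthogonally split, hence lies in $\mathcal E_{\mathrm{split}}$ by axiom (a) for $C^k$-Whitney type spaces, and by \eqref{eq:interUimmgamma} and $h_{\gamma_0(t)}=0$ on $I$ we get exactly as in \eqref{eq:contotrasversalita}
\[
\int_0^1\Big[h(\dot\gamma_0,\mathbf D^{g_0}V)+\tfrac12\nabla h(V,\dot\gamma_0,\dot\gamma_0)\Big]\,\mathrm dt=\tfrac12\int_I K_t\big(\dot\gamma_0(t),\dot\gamma_0(t)\big)\,\mathrm dt>0,
\]
establishing \eqref{eq:transvcond} in the non-periodic case.

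For the remaining case, in which $\gamma_0$ is a portion of a closed geodesic of minimal period $T<1$, I would copy verbatim the ``parity trick'' from the end of the proof of Proposition~\ref{thm:gensemiRiem}: split $\gamma_0$ at the first return time $t_*$ to $q$ into $\gamma_1=\gamma_0\vert_{[0,t_*]}$ and $\gamma_2=\gamma_0\vert_{[t_*,T]}$ (here the hypothesis $p\neq q$ is what guarantees $0<t_*<T$), form the averaged Jacobi fields $W^1$ and $W^2$ as in \eqref{eq:defW1W2}, observe that not both can be everywhere parallel to $\dot\gamma_0$, and then run the construction above on whichever subinterval $I_i$ the field $W^i$ is non-parallel, choosing $U_i$ to satisfy \eqref{eq:proprietaapertiUi}; the telescoping computation there goes through unchanged since $h$ is still split. \textbf{The main obstacle}, and the one place where this differs from Proposition~\ref{thm:gensemiRiem}, is exactly the constraint that $h$ be orthogonally split: one must be sure that the freedom remaining after imposing $\mathfrak b_{(x,y)}\big((v_1,0),(0,v_2)\big)=0$ is still enough to make $K_t(\dot\gamma_0(t),\dot\gamma_0(t))$ have a definite sign with nonzero integral. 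This works because $\dot\gamma_0$ is nonconstant, so at least one of the diagonal blocks $\dot\gamma_i\otimes\dot\gamma_i$ is nonzero on a subinterval, and on that block $K_t$ is completely unconstrained; one also has to check that Lemma~\ref{thm:constrloc-strong} indeed applies to the subbundle $E$ of split tensors, which it does since that bundle is $\nabla$-parallel for a product-compatible connection and $\mathbf\Gamma(E)$ contains all compactly supported split tensors. With this observed, the proof concludes exactly as that of Proposition~\ref{thm:gensemiRiem}.
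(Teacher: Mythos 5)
Your proposal is correct and takes essentially the same approach as the paper: reduce to checking the transversality condition via Corollary~\ref{thm:genericnondegeneracy}, apply Lemma~\ref{thm:constrloc-strong} to the subbundle $E$ of orthogonally split symmetric $(0,2)$-tensors with a connection inherited from a product connection on $TM$, and handle the periodic case with the parity trick. The only cosmetic difference is that you concentrate $K_t$ on a single block $\dot\gamma_i\otimes\dot\gamma_i$ on a subinterval where $\dot\gamma_i\neq0$, whereas the paper chooses two families $K^1_t,K^2_t$ using that $\dot x_1(t),\dot x_2(t)$ are never simultaneously zero; these are equivalent.
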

\begin{proof}
Let $g_0\in\mathcal A$ be fixed and consider a $g_0$-geodesic
$\gamma_0=(x_1,x_2)$ joining $p$ and $q$, and a nontrivial  $g_0$-Jacobi field $V=(V_1,V_2)$
along $\gamma_0$ which vanishes at the endpoints. The proof goes along the same lines as the proof of
Proposition~\ref{thm:gensemiRiem}, with the difference that here the variation $h$ has to be found
in the Banach space $\mathcal E_{\mathrm{split}}$. Again, we will determine the variation $h$ to be
an orthogonally split symmetric $(0,2)$-tensor field having compact support in $M$.
One has to repeat the proof of Proposition~\ref{thm:gensemiRiem}, which involves the
construction of a family of bilinear forms $K_t$ on $T_{\gamma_0(t)}M=T_{x_1(t)}M_1\oplus T_{x_2(t)}M_2$
with the property that $\int_IK_t\big(\dot\gamma_0(t),\dot\gamma_0(t)\big)\,\mathrm dt>0$ on some given
interval $I$.
Recall that in the proof of Proposition~\ref{thm:gensemiRiem} we are choosing the family $H_t$ to vanish identically.
In the case under consideration, the desired $K_t$ can be chosen such that $K_t\big((v_1,0),(0,v_2)\big)=0$
for every $v_1\in T_{x_1(t)}M_1$, $v_2\in T_{x_2(t)}M_2$ and every $t\in I$. Namely, it suffices
to choose families of symmetric bilinear forms $K_t^i$ on $T_{x_i(t)}M_i$, $i=1,2$,
satisfying
\begin{equation}\label{eq:sommapositiva}
\sum_{i=1}^2\int_IK^i_t\big(\dot x_i(t),\dot x_i(t)\big)\,\mathrm dt>0
\end{equation}
and set $K_t\big((v_1,v_2),(w_1,w_2)\big)=K^1_t(v_1,w_1)+K^2_t(v_2,w_2)$ for all $t$.
The existence of families $K_t^i$ that satisfy \eqref{eq:sommapositiva} is easily proven,
keeping in mind that $\dot x_1(t)$ and $\dot x_2(t)$ are not both zero anywhere.
Now, Lemma~\ref{thm:constrloc-strong} is applied to the vector bundle $E$ over $M$ whose sections
are the symmetric $(0,2)$-tensors $h$ on $M$ satisfying $h_{(x_1,x_2)}\big((v_1,0),(0,v_2)\big)=0$ for
all $x_i\in M_i$ and all $v_i\in T_{x_i}M_i$. In order to make the result of Lemma~\ref{thm:constrloc-strong}
compatible with formula~\eqref{eq:contotrasversalita}, one more detail needs to be clarified.
Namely, one needs to consider a connection $\nabla$ in $E$ which is \emph{inherited} from a
connection $\widetilde\nabla$ in $TM$; more precisely, $\nabla$ has to be given as the restriction
to the subbundle $E$ of the induced connection $\widetilde\nabla$ on $TM^*\otimes TM^*$.
It will not be the case in general that connections on $TM^*\otimes TM^*$
restrict to $E$, i.e., that covariant derivatives of sections of $E$ remain in $E$.
In order to make the connection $\widetilde\nabla$ restrictable to $E$, the corresponding
connection $\widetilde\nabla$ on $TM$ has to be chosen of the form:
\[\widetilde\nabla=\pi_1^*(\nabla^1)\oplus\pi_2^*(\nabla^2),\]
where $\nabla^i$ is a connection on $TM_i$, and $\pi_i:M_1\times M_2\to M_i$ is the projection,
$i=1,2$. This concludes the argument.
\end{proof}
\subsection{Globally hyperbolic Lorentzian metrics}\label{sub:globhyper}
Let us now study the nondegeneracy problem for geodesics in \emph{globally hyperbolic} Lorentzian manifolds.
A time oriented Lorentz\-ian metric $g$ on a connected manifold $M$ is said to be globally hyperbolic
if $(M,g)$ admits  a \emph{Cauchy surface} $\Sigma$, i.e., $\Sigma$ is a spacelike hypersurface of $M$ which is
met exactly once by every non extendible causal curve. There are several equivalent notions of global
hyperbolicity that will not be discussed here (see \cite{BeErEa96, BeSan2, One83} for details). Let us
recall that by a classical result by Geroch \cite{Ger}, whose statement has been recently strengthened
by Bernal and S\'anchez in \cite{BerSan2, BerSan3}, a globally hyperbolic Lorentzian manifold $(M,g)$
is isometric to a product $\Sigma\times\R$, where $\Sigma$ is any Cauchy surface of $(M,g)$, endowed with
an orthogonally split metric tensor which is positive definite on the factor $\Sigma$ and negative definite on the
one-dimensional factor $\R$. We will then consider a manifold $M$ of the form $\Sigma\times\R$, where $\Sigma$ is
a smooth manifold endowed with a complete Riemannian metric $g^0$; we will denote by $\pi_\Sigma:\Sigma\times\R\to\Sigma$
the projection
onto the first factor. We will study the set of
metrics $g^{\alpha,\beta}$ on $M$, where:
\begin{itemize}
\item $\alpha$ is a fixed smooth section of the pull-back bundle $\pi_\Sigma^*\big(T\Sigma^*\otimes T\Sigma\big)$
such that $g^0_x\big(\alpha_{(x,s)}\cdot,\cdot\big)$ is positive
definite on $T_x\Sigma$ for all $x\in\Sigma$ and all $s\in\R$;
\item $\beta:\Sigma\times\R\to\R^+$ is a smooth positive function,
\end{itemize}
and the metric tensor $g^{\alpha,\beta}$ is defined by:
\begin{equation}\label{eq:defgalphabeta}
g^{\alpha,\beta}_{(x,s)}\big((v,r),(w,\bar r)\big)=g^0_x\big(\alpha_{(x,s)}v,w\big)-\beta_{(x,s)}r\bar r,
\end{equation}
for all $x\in\Sigma$, $s\in\R$, $v,w\in T_x\Sigma$, $r,\bar r\in T_t\R\cong\R$.
A genericity result totally analogous to Proposition~\ref{thm:gensemiRiem-split} holds for the
family of metrics $g^{\alpha,\beta}$, that can be described simply as metric of splitting type
on a product manifold $M_1\times M_2$ with $M_2$ one-dimensional.
We will be interested in studying the genericity of nondegeneracy property in the
subfamily of the $g^{\alpha,\beta}$ consisting of globally hyperbolic metrics.

Given $\alpha$ as above, set:
\[\lambda_{(x,s)}(\alpha)=\Vert\alpha_{(x,s)}^{-1}\Vert^{-1},\]
where $\Vert\cdot\Vert$ denotes the operator norm on $\mathrm{End}(T_x\Sigma)$ induced by the
positive definite inner product
$g^0_x$. Equivalently, $\lambda_{(x,s)}(\alpha)$ can be defined as the minimum eigenvalue of the
positive operator $\alpha_{(x,s)}$ on $T_x\Sigma$. Sufficient conditions for the global hyperbolicity
of the Lorentzian metric $g^{\alpha,\beta}$ have been studied in the literature, see \cite{SanBari};
we will be interested in the following:
\begin{prop}\label{thm:condglobhyperb}
Let $x_0$ be any fixed point in $\Sigma$, and denote by $\mathrm d_0:\Sigma\to\left[0,+\infty\right[$ be
the distance from $x_0$ function induced by the Riemannian metric $g^0$.
Assume that for all integer $n>0$ the following holds:
\[\sup_{\stackrel{x\in\Sigma}{\vert s\vert\le n}}\sqrt{\frac{\beta_{(x,s)}}{\lambda_{(x,s)}(\alpha)\big(1+\mathrm d_0(x)^2\big)}}<+\infty.\]
Then, for all $s_0\in\R$, $\Sigma\times\{s_0\}$ is a Cauchy surface of $g^{\alpha,\beta}$.
In particular, if $\Sigma$ is compact then $g^{\alpha,\beta}$ is always globally hyperbolic.
\end{prop}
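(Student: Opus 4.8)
The plan is to verify the defining property of a Cauchy surface directly: fixing $s_0\in\R$, I would show that every inextendible causal curve of $g^{\alpha,\beta}$ crosses the slice $\Sigma\times\{s_0\}$ exactly once. Two preliminary observations reduce this to a one-dimensional statement. First, each slice $\Sigma\times\{s\}$ is spacelike, because the restriction of $g^{\alpha,\beta}$ to it is $g^0_x(\alpha_{(x,s)}\cdot,\cdot)$, which is positive definite; one orients $g^{\alpha,\beta}$ in time so that $\partial_s$ is future pointing. Second, if $\gamma(\tau)=(x(\tau),s(\tau))$ is future directed causal, then $g^0_x(\alpha\dot x,\dot x)-\beta\dot s^2\le0$ together with positive definiteness of $g^0(\alpha\cdot,\cdot)$ forces $\dot s\ne0$ wherever $\dot\gamma\ne0$, and future orientation gives $\dot s>0$. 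Hence $s$ is strictly increasing along $\gamma$, so $\gamma$ meets each slice at most once, and it remains only to prove that on an inextendible causal curve $s$ ranges over all of $\R$.

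To this end I would reparametrize $\gamma$ by the coordinate $s$, writing it as $s\mapsto(x(s),s)$ on a maximal open interval $(s^-,s^+)\subset\R$; the causal condition becomes $g^0_{x(s)}\big(\alpha_{(x(s),s)}x'(s),x'(s)\big)\le\beta_{(x(s),s)}$. Since $\lambda_{(x,s)}(\alpha)$ is the least eigenvalue of the positive $g^0_x$-symmetric operator $\alpha_{(x,s)}$, this yields the pointwise speed bound $\lvert x'(s)\rvert_{g^0}\le\sqrt{\beta_{(x(s),s)}\big/\lambda_{(x(s),s)}(\alpha)}$. The hypothesis now enters: on any slab $\lvert s\rvert\le n$ the supremum $C_n$ appearing in the statement is finite, so $\lvert x'(s)\rvert_{g^0}\le C_n\sqrt{1+d_0(x(s))^2}$ for $s$ in that slab.

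Suppose, for contradiction, that $s^+<+\infty$; the case $s^->-\infty$ is symmetric, running the argument toward the past. Fix an interior value $s_1$ and an integer $n>\max\{\lvert s_1\rvert,\lvert s^+\rvert\}$, so the speed bound holds on all of $[s_1,s^+)$. Writing $\rho(s)=d_0(x(s))$ and using $\lvert\rho(s_2)-\rho(s_1)\rvert\le\int_{s_1}^{s_2}\lvert x'\rvert_{g^0}\,\mathrm ds$, a Gronwall/comparison estimate — concretely $\tfrac{\mathrm d}{\mathrm ds}\mathrm{arcsinh}\,\rho(s)\le C_n$ — shows that $\rho$ stays bounded on the finite interval $[s_1,s^+)$. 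Consequently $\lvert x'(s)\rvert_{g^0}$ is bounded there, so the $g^0$-length of $x$ over $[s,s']$ tends to $0$ as $s,s'\to s^+$; by completeness of $g^0$ (Hopf--Rinow), $x(s)$ converges to some $x^*\in\Sigma$, so $\gamma$ extends continuously to $(x^*,s^+)$, contradicting inextendibility. Hence $s^-=-\infty$ and $s^+=+\infty$ on every inextendible causal curve, so $s$ — being continuous and strictly increasing — attains the value $s_0$ exactly once; this proves that $\Sigma\times\{s_0\}$ is a Cauchy surface, and therefore that $g^{\alpha,\beta}$ is globally hyperbolic. When $\Sigma$ is compact the hypothesis is automatic, since $d_0$ is bounded and $\beta,\lambda_{(\cdot,\cdot)}(\alpha)$ are continuous and positive on the compact slab $\Sigma\times[-n,n]$.

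The heart of the argument, and the only step that is more than bookkeeping, is the escape estimate of the third paragraph: the growth condition on $\beta\big/\big(\lambda(\alpha)(1+d_0^2)\big)$ is exactly what prevents an inextendible causal curve from running off to $g^0$-infinity while its time coordinate $s$ remains bounded. A minor technical point, should one wish to admit merely continuous causal curves rather than piecewise $C^1$ ones, is that such curves are still strictly monotone in $s$ and, by the same coefficient bounds, locally Lipschitz with respect to $g^0$ on each slab, so the estimates apply verbatim.
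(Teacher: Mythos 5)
Your proof is correct. Note, however, that the paper itself does not prove Proposition~\ref{thm:condglobhyperb}: it simply refers to \cite[Proposition~3.2]{SanBari}, where S\'anchez establishes exactly this kind of sufficient condition for a slice of a split Lorentzian manifold to be a Cauchy surface. What you supply is a self-contained version of the standard argument that is (in spirit) the one carried out in that reference: spacelikeness of the slices, strict monotonicity of the time coordinate along causal curves (so each slice is met at most once), and then the Gronwall-type escape estimate. The key step, as you correctly identify, is the bound $\lvert x'(s)\rvert_{g^0}\le C_n\sqrt{1+\mathrm d_0(x(s))^2}$ coming from the causal inequality together with the finiteness of the supremum in the hypothesis, and the observation that $\dds\,\mathrm{arcsinh}\,\rho\le C_n$ prevents $x(s)$ from running off to $g^0$-infinity in finite coordinate time; completeness of $g^0$ (which the paper does assume) then gives a limit endpoint and contradicts inextendibility. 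All of this is sound, including the treatment of the compact case. The only slightly loose phrase is that $\gamma$ ``extends continuously'' to $(x^*,s^+)$: what one actually needs, and what the argument does deliver, is that $(x^*,s^+)$ is a future endpoint of the causal curve, so $\gamma$ is future-extendible in the causal sense. In short: correct and essentially the same approach as the cited source, but more detailed than the paper, which only gives a citation.
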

\begin{proof}
See \cite[Proposition~3.2]{SanBari}.
\end{proof}
Motivated by the result above, let us consider the  Banach space $\mathcal G$ whose points are pairs $(\alpha,\beta)$,
where:
\begin{itemize}
\item $\alpha$ is a section of class $C^2$ of the vector bundle $\pi_\Sigma^*(T\Sigma^*\otimes T\Sigma)$
such that $\alpha_{(x,s)}$
is a $g_0$-symmetric operator on $T_x\Sigma$ for all $(x,s)$;
\item $\beta:\Sigma\times\R\to\R$ is a map of class $C^2$;
\item $\alpha$ satisfies the following boundedness assumptions:\smallskip

\begin{itemize}
\item[$\diamond$] $C_0(\alpha)=\sup\limits_{(x,s)\in\Sigma\times\R}\Vert\alpha_{(x,s)}
\big(1+\mathrm d_0(x^2)\big)\Vert<+\infty$. Here, $\Vert\cdot\Vert$ is
the operator norm on $T_x\Sigma$ induced by the Riemannian metric $g_0$.\smallskip

\item[$\diamond$] $C_1(\alpha)=\sup\limits_{(x,s)\in\Sigma\times\R}\Vert\nabla\alpha_{(x,s)}\Vert<+\infty$.
Here, $\nabla$ is the connection
on the vector bundle $T^*(\Sigma\times\R)\otimes\pi_\Sigma^*(T\Sigma^*\otimes T\Sigma)$ induced by the Levi--Civita connection of $g_0$
and the standard connection on the factor $\R$.\smallskip

\item[$\diamond$] $C_2(\alpha)=\sup\limits_{(x,s)\in\Sigma\times\R}\Vert\nabla^2\alpha_{(x,s)}\Vert<+\infty$.
Here, the second covariant
derivative of $\alpha$ is taken relatively to the connection on the vector bundle $T^*(\Sigma\times\R)
\otimes T^*(\Sigma\times\R)\otimes\pi_\Sigma^*(T\Sigma^*\otimes T\Sigma)$ induced by the Levi--Civita connection of
$g_0$ and the standard connection on the factor $\R$.\smallskip
\end{itemize}
\item $\beta$ satisfies the following boundedness assumptions:
\smallskip

\begin{itemize}
\item[$\diamond$] $D_0(\beta)=\sup\limits_{(x,s)\in\Sigma\times\R}\vert\beta_{(x,s)}\vert<+\infty$.
\smallskip

\item[$\diamond$] $D_1(\beta)=\sup\limits_{(x,s)\in\Sigma\times\R}\Vert\mathrm d\beta_{(x,s)}\Vert<+\infty$.\smallskip

\item[$\diamond$] $D_2(\beta)=\sup\limits_{(x,s)\in\Sigma\times\R}\Vert\nabla\mathrm d\beta_{(x,s)}\Vert<+\infty$.
Here, $\nabla$ denotes the covariant derivative of the connection in $T^*(\Sigma\times\R)$ induced by the
Levi--Civita connection of $g_0$ and the standard connection on the factor $\R$.\smallskip
\end{itemize}
\end{itemize}
A Banach space norm on $\mathcal G$ is given by:
\[\Vert(\alpha,\beta)\Vert=\max\big\{C_0(\alpha),\ C_1(\alpha),\ C_2(\alpha),\ D_0(\beta),\ D_1(\beta),\ D_2(\beta)\big\}.\]
\begin{prop}
Let $\varepsilon$ and $b$ be fixed positive real numbers. The subset $\mathcal A_{\varepsilon,b}\subset\mathcal G$
given by:
\begin{multline*}
\mathcal A_{\varepsilon,b}=\Big\{(\alpha,\beta)\in\mathcal G:\text{$g_0(\alpha_{(x,s)}\cdot,\cdot)$ is positive definite},
\ \inf_{(x,s)\in\Sigma\times\R}\beta_{(x,s)}>0\\ \sup_{(x,s)\in\Sigma\times\R}\beta_{(x,s)}<b,
\
\text{and}\ \
\inf_{(x,s)\in\Sigma\times\R}\lambda_{(x,s)}(\alpha)\big((1+\mathrm d_0(x)^2\big)>\varepsilon\Big\}\end{multline*}
is open in $\mathcal G$. For all $(\alpha,\beta)\in\mathcal A_{\varepsilon,b}$, the tensor
$g^{\alpha,\beta}$ defined in \eqref{eq:defgalphabeta}
is a globally hyperbolic Lorentzian metric on $\Sigma\times\R$.
\end{prop}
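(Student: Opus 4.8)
The plan is to prove the two assertions separately; the assertion about global hyperbolicity will be essentially immediate from Proposition~\ref{thm:condglobhyperb}, so the real content is the openness of $\mathcal A_{\varepsilon,b}$ in $\mathcal G$. Throughout, $\sup$ and $\inf$ are understood over $\Sigma\times\R$. For the hyperbolicity part, let $(\alpha,\beta)\in\mathcal A_{\varepsilon,b}$. By the defining conditions, $g^0_x(\alpha_{(x,s)}\cdot,\cdot)$ is positive definite and $\inf\beta>0$, so formula~\eqref{eq:defgalphabeta} gives a $C^2$ symmetric $(0,2)$-tensor which is positive definite on each $T_x\Sigma\times\{0\}$ and negative definite on each one-dimensional $\{0\}\times T_s\R$, these two subspaces being $g^{\alpha,\beta}$-orthogonal; hence $g^{\alpha,\beta}$ is a nondegenerate metric of index $1$, i.e.\ Lorentzian, time-oriented by the globally defined timelike field $\partial_s$. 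Moreover, $\sup\beta<b$ together with $\inf\lambda_{(x,s)}(\alpha)\big(1+\mathrm d_0(x)^2\big)>\varepsilon$ gives, for every $(x,s)$ and a fortiori for every integer $n>0$,
\[
\sup_{x\in\Sigma,\;\vert s\vert\le n}\sqrt{\frac{\beta_{(x,s)}}{\lambda_{(x,s)}(\alpha)\big(1+\mathrm d_0(x)^2\big)}}\le\sqrt{b/\varepsilon}<+\infty,
\]
so the hypothesis of Proposition~\ref{thm:condglobhyperb} is met: every slice $\Sigma\times\{s_0\}$ is a Cauchy surface of $g^{\alpha,\beta}$, hence $g^{\alpha,\beta}$ is globally hyperbolic.

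For openness, I would fix $(\alpha_0,\beta_0)\in\mathcal A_{\varepsilon,b}$ and set $\delta_1=\inf\beta_0$, $\delta_2=b-\sup\beta_0$, and $\delta_3=\inf\big[\lambda_{(x,s)}(\alpha_0)\big(1+\mathrm d_0(x)^2\big)\big]-\varepsilon$, all strictly positive since $(\alpha_0,\beta_0)\in\mathcal A_{\varepsilon,b}$, and then show that every $(\alpha,\beta)\in\mathcal G$ with $\Vert(\alpha-\alpha_0,\beta-\beta_0)\Vert<\delta:=\min\{\delta_1,\delta_2,\delta_3\}$ again lies in $\mathcal A_{\varepsilon,b}$. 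Since $D_0(\beta-\beta_0)\le\Vert(\alpha-\alpha_0,\beta-\beta_0)\Vert<\delta$, one has $\inf\beta\ge\inf\beta_0-D_0(\beta-\beta_0)>0$ and $\sup\beta\le\sup\beta_0+D_0(\beta-\beta_0)<b$. For the condition on $\alpha$, introduce the $g^0_x$-symmetric operators $T^0_{(x,s)}=\big(1+\mathrm d_0(x)^2\big)\alpha^0_{(x,s)}$ and $T_{(x,s)}=\big(1+\mathrm d_0(x)^2\big)\alpha_{(x,s)}$ on $T_x\Sigma$, so that $\sup\Vert T_{(x,s)}-T^0_{(x,s)}\Vert=C_0(\alpha-\alpha_0)<\delta$. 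The smallest eigenvalue of a $g^0_x$-symmetric operator is a $1$-Lipschitz function of the operator for the operator norm — the same Lipschitz estimate recalled in the footnote to the definition of $\mathrm{Met}^k_{\nu,\star}$, valid for $\min\lambda$ just as for $\min\vert\lambda\vert$, via $\lambda_{\min}(A)=\min_{\Vert v\Vert=1}g^0_x(Av,v)$ — and $\lambda_{\min}\big(T^0_{(x,s)}\big)=\lambda_{(x,s)}(\alpha_0)\big(1+\mathrm d_0(x)^2\big)\ge\varepsilon+\delta_3$, whence $\lambda_{\min}\big(T_{(x,s)}\big)\ge(\varepsilon+\delta_3)-C_0(\alpha-\alpha_0)>\varepsilon$ for all $(x,s)$. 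In particular each $T_{(x,s)}$, hence each $\alpha_{(x,s)}$, is positive definite, so $g^0_x(\alpha_{(x,s)}\cdot,\cdot)$ is positive definite and $\lambda_{(x,s)}(\alpha)=\lambda_{\min}\big(\alpha_{(x,s)}\big)$; therefore $\inf\lambda_{(x,s)}(\alpha)\big(1+\mathrm d_0(x)^2\big)=\inf\lambda_{\min}\big(T_{(x,s)}\big)>\varepsilon$, so $(\alpha,\beta)\in\mathcal A_{\varepsilon,b}$.

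The \emph{main obstacle} is precisely this bookkeeping with the weight $1+\mathrm d_0(x)^2$: the norm of $\mathcal G$ controls $\alpha$ only through $C_0(\alpha)=\sup\Vert\big(1+\mathrm d_0(x)^2\big)\alpha_{(x,s)}\Vert$, while $\mathcal A_{\varepsilon,b}$ imposes the matching weighted lower bound on $\lambda_{(x,s)}(\alpha)\big(1+\mathrm d_0(x)^2\big)$; passing to the operators $T_{(x,s)}=\big(1+\mathrm d_0(x)^2\big)\alpha_{(x,s)}$ converts both into statements uniform over $\Sigma\times\R$, after which the $1$-Lipschitz dependence of $\lambda_{\min}$ and the stability of strict $\sup$/$\inf$ bounds under uniformly small perturbations finish the proof. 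By contrast, the global-hyperbolicity half is entirely contained in the cited Proposition~\ref{thm:condglobhyperb}, once the uniform bound $\sqrt{b/\varepsilon}$ has been observed.
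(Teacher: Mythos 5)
Your proof is correct and takes essentially the same approach as the paper: openness is reduced, via the weighted operators $T_{(x,s)}=(1+\mathrm d_0(x)^2)\alpha_{(x,s)}$ controlled by the seminorm $C_0$, to the $1$-Lipschitz dependence of $\lambda_{\min}$ on a symmetric operator, while global hyperbolicity follows from the uniform bound $\sqrt{b/\varepsilon}$ and Proposition~\ref{thm:condglobhyperb}. The paper states these two points tersely and leaves the $\delta$-bookkeeping to the reader; you have merely filled in those routine details, which is fine.
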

\begin{proof}
As to the openness of $\mathcal A_{\varepsilon,b}$, the only non trivial question is establishing that the assumption
\begin{itemize}
\item $g_0(\alpha_{(x,s)}\cdot,\cdot)$ is positive definite
\item $\inf\limits_{(x,s)\in\Sigma\times\R}\lambda_{(x,s)}(\alpha)\big((1+\mathrm d_0(x)^2\big)>\varepsilon$
\end{itemize}
is open in the topology of $\mathcal G$. This follows immediately from the choice of the semi-norm $C_0(\alpha)$
above, and the fact that the ``least eigenvalue function'' $T\mapsto\lambda_{\mathrm{min}}(T)\in\R^+$
is Lipschitz with Lipschitzian constant $1$  in the set of positive symmetric operators $T$ on a vector space with inner product, that is, $|\lambda_{\mathrm{min}}(T)-\lambda_{\mathrm{min}}(\tilde{T})|\leq \|T-\tilde{T}\|$ (see also footnote $(4)$).

For $(\alpha,\beta)\in\mathcal A_{\varepsilon,b}$, the following inequality holds:
\begin{equation}\label{eq:garantisceglobhyerb}\sup_{\stackrel{x\in\Sigma}{s\in\R}}\sqrt{\frac{\beta_{(x,s)}}{\lambda_{(x,s)}(\alpha)\big(1+\mathrm d_0(x)^2\big)}}<
\sqrt{\frac b\varepsilon}<+\infty,
\end{equation}
and the global hyperbolicity of $g^{\alpha,\beta}$ is deduced from Proposition~\ref{thm:condglobhyperb}.
\end{proof}
\begin{prop}\label{thm:genglobhyperb}
Let $p$ and $q$ be distinct points in $\Sigma\times\R$.
For all $\varepsilon,b>0$, the set of pairs $(\alpha,\beta)\in\mathcal A_{\varepsilon,b}$ such that
$p$ and $q$ are not conjugate along any $g^{\alpha,\beta}$-geodesic in $\Sigma\times\R$ is generic in
$\mathcal A_{\varepsilon,b}$.
The open set:
\begin{multline*}
\mathcal A=\Big\{(\alpha,\beta)\in\mathcal G:\text{$g_0(\alpha_{(x,s)}\cdot,\cdot)$ is positive definite},
\ \inf_{(x,s)\in\Sigma\times\R}\beta_{(x,s)}>0\\ \sup_{(x,s)\in\Sigma\times\R}\beta_{(x,s)}<+\infty,
\
\text{and}\ \
\inf_{(x,s)\in\Sigma\times\R}\lambda_{(x,s)}(\alpha)\big((1+\mathrm d_0(x)^2\big)>0\Big\}\end{multline*}
contains a dense $G_\delta$ consisting of pairs $(\alpha,\beta)$ such that $p$ and $q$ are nonconjugate along any
$g^{\alpha,\beta}$-geodesic.
\end{prop}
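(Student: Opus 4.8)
The plan is to deduce the first assertion from Corollary~\ref{thm:genericnondegeneracy}, following verbatim the scheme used in the proof of Proposition~\ref{thm:gensemiRiem-split} with $M_1=\Sigma$ and $M_2=\R$. I would take $X=\mathcal A_{\varepsilon,b}$, which is an open subset of the Banach space $\mathcal G$, take $Y=\Omega_{p,q}(\Sigma\times\R)$ with the Hilbert--Riemann structure \eqref{eq:definnerprTgamma} associated with the complete product metric $g^0\oplus\mathrm dt^2$, and set
\[
f\big((\alpha,\beta),\gamma\big)=\tfrac12\int_0^1 g^{\alpha,\beta}(\dot\gamma,\dot\gamma)\,\mathrm dt .
\]
Since $g^{\alpha,\beta}$ is linear in $(\alpha,\beta)$ by \eqref{eq:defgalphabeta}, $f$ is smooth in the first variable and of class $C^k$, $k\ge2$, in the second, and the Hessian $\frac{\partial^2 f}{\partial\gamma^2}$ at a critical point is a self-adjoint Fredholm operator by Lemma~\ref{thm:fredholmness}. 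Hence everything reduces to verifying the transversality condition \eqref{eq:transvcond}: given $(\alpha_0,\beta_0)\in\mathcal A_{\varepsilon,b}$, a $g^{\alpha_0,\beta_0}$-geodesic $\gamma_0$ from $p$ to $q$, and a nontrivial Jacobi field $V$ along $\gamma_0$ vanishing at the endpoints, I must exhibit a direction $(\dot\alpha,\dot\beta)\in\mathcal G$ for which the last term of \eqref{eq:contotrasversalita} is nonzero.

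The key observation will be that the metric variations available in this family are precisely the orthogonally split ones. The first-order variation of $g^{\alpha,\beta}$ in the direction $(\dot\alpha,\dot\beta)$ is the orthogonally split $(0,2)$-tensor $h$ with $h_{(x,s)}\big((v,r),(w,\bar r)\big)=g^0_x(\dot\alpha_{(x,s)}v,w)-\dot\beta_{(x,s)}\,r\bar r$; as $\dot\alpha_{(x,s)}$ runs over the $g^0_x$-symmetric endomorphisms of $T_x\Sigma$ and $\dot\beta_{(x,s)}$ over $\R$, the form $h_{(x,s)}$ exhausts all orthogonally split symmetric bilinear forms on $T_{(x,s)}(\Sigma\times\R)$; moreover any compactly supported such $h$ comes from a pair $(\dot\alpha,\dot\beta)\in\mathcal G$, and since $\mathcal A_{\varepsilon,b}$ is open in $\mathcal G$ a small multiple of it keeps the metric in $\mathcal A_{\varepsilon,b}$, hence globally hyperbolic. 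Consequently the construction from the proof of Proposition~\ref{thm:gensemiRiem-split} goes through unchanged: I would fix the product connection $\nabla=\pi_\Sigma^*(\nabla^{g^0})\oplus\nabla^{\R}$ (which restricts to the bundle of orthogonally split tensors and is the one appearing in \eqref{eq:contotrasversalita}) and, using Lemmas~\ref{thm:selfintersections}, \ref{thm:Jnontparalleltogamma} and~\ref{thm:constrloc-strong}, build a compactly supported orthogonally split $h$ with $h_{\gamma_0(t)}=0$ and $\nabla_{V_t}h=K_t$ on a short subinterval, the bilinear forms $K_t$ being chosen so that $K_t\big(\dot\gamma_0(t),\dot\gamma_0(t)\big)\ge0$ with positive integral, which is possible because the $\Sigma$- and $\R$-components of $\dot\gamma_0$ never vanish together. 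The case in which $\gamma_0$ is a portion of a closed geodesic of minimal period $T<1$ is handled, as there, via the hypothesis $p\ne q$ and the parity trick, replacing $V$ by the iterated sums $W^1,W^2$ of \eqref{eq:defW1W2}. The main obstacle is really just this identification of the admissible variations with the orthogonally split tensors; once it is secured, Subsection~\ref{sub:splitting} supplies everything else, with the separability needed by Sard--Smale handled as in Subsection~\ref{sub:metrics}.

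For the final assertion I would argue by Baire category. Note that $\mathcal A=\bigcup_{n\in\N}\mathcal A_{1/n,\,n}$, an increasing union of open subsets of $\mathcal G$. Let $G\subset\mathcal A$ be the set of pairs $(\alpha,\beta)$ such that $p$ and $q$ are nonconjugate along every $g^{\alpha,\beta}$-geodesic. By the first part $G\cap\mathcal A_{1/n,n}$ is generic, hence comeager, in $\mathcal A_{1/n,n}$, so $(\mathcal A\setminus G)\cap\mathcal A_{1/n,n}$ is meager in $\mathcal G$ for every $n$; therefore $\mathcal A\setminus G=\bigcup_n\big[(\mathcal A\setminus G)\cap\mathcal A_{1/n,n}\big]$ is meager in the Baire space $\mathcal A$, i.e.\ $G$ is comeager in $\mathcal A$ and thus contains a dense $G_\delta$.
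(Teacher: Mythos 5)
Your proof is correct and follows essentially the same route as the paper: the first assertion is reduced to Proposition~\ref{thm:gensemiRiem-split} by identifying the admissible infinitesimal variations with the orthogonally split tensors (the paper phrases this as observing that $\mathcal E=\{g^{\alpha,\beta}:(\alpha,\beta)\in\mathcal G\}$ is a $C^2$-Whitney type Banach space of orthogonally split tensors over $\Sigma\times\R$), and the second assertion comes from writing $\mathcal A=\bigcup_{n\ge1}\mathcal A_{1/n,n}$ and a Baire category argument. You spell out the meagerness step more explicitly than the paper does, and you re-derive rather than cite the transversality verification from Proposition~\ref{thm:gensemiRiem-split}, but both are minor stylistic differences.
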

\begin{proof}
The first statement follows from Proposition~\ref{thm:gensemiRiem-split}, observing that the vector space
$\mathcal E=\big\{g^{\alpha,\beta}:(\alpha,\beta)\in\mathcal G\big\}$ inherits from $\mathcal G$ a
Banach space norm that makes it into a $C^2$-Whitney type Banach space of orthogonally split
tensors over $\Sigma\times\R$.
Note that $\mathcal G$ contains all pairs $(\alpha,\beta)$ of class $C^2$ having compact support, and
its topology is finer than the weak Whitney $C^2$-topology. As to the second statement, it is enough to observe that
$\mathcal A$ can be described as the countable union $\bigcup_{n\ge1}\mathcal A_{\frac1n,n}$ of open
sets each of which contains a dense $G_\delta$ with the desired property.
\end{proof}
\subsection{Stationary Lorentzian metrics}
\label{sec:stationary}
Let us now consider the case of Lorentzian metrics admitting a timelike Killing vector field;
we will exhibit an example showing that the transversality condition discussed in Subsection~\ref{sub:semiRiemgeneric}
does not hold in general in this class.

Let $(M,g)$ be a Lorentzian manifold, and assume the existence of a Killing vector field $Y$ on $M$.
It is a simple observation that an integral line $\gamma$ of $Y$ is a geodesic in $(M,g)$ if and only if
at some point $\gamma(t_0)$ of $\gamma$ the function $g(Y,Y)$ has a critical point. Namely, since
$g(Y,Y)$ is invariant by the flow of $Y$, the existence of one critical point of $g(Y,Y)$ along $\gamma$
is equivalent to the fact that every point of $\gamma$ is critical for $g(Y,Y)$. Now,  $\gamma$ is a geodesic
if and only if $\nabla_YY=0$ along $\gamma$, i.e., if $g\big(\nabla_{Y(\gamma(t))}Y,v\big)=-g\big(\nabla_vY,Y\big)=-\frac12v\big(g(Y,Y)\big)=0$
for all $t$ and all $v\in T_{\gamma(t)}M$, i.e., if and only if $\gamma(t)$ is a critical point of
$g(Y,Y)$ for all $t$.
The geodesics in $(M,g)$ that are integral lines of $Y$ will be called \emph{vertical}.

Let us show that, given a Lorentzian manifold $(M,g)$ admitting a timelike Killing vector field $Y$, the transversality
condition may fail to hold along vertical geodesics in the class of all Lorentzian metrics
on $M$ that have the prescribed field $Y$ as timelike Killing vector field. A stationary Lorentzian manifold $(M,g)$ is said to be \emph{standard}
if $M$ is given by a product $M_0\times\R$, where $M_0$ is a differentiable manifold, and the metric tensor
$g$ is of the form:
\begin{equation}\label{eq:formastationary}g_{(x,s)}\big((v,r),(\bar v,\bar r)\big)=\mathfrak g_x(v,\bar v)+\mathfrak g_x\big(\delta(x),v\big)\bar r+
\mathfrak g_x\big(\delta(x),\bar v\big)r-\beta(x)r\bar r,\end{equation}
where $x\in M_0$, $s\in\R$, $v,\bar v\in T_xM_0$, $r,\bar r\in T_s\R\cong\R$, $\mathfrak g$ is a Riemannian metric tensor
on $M_0$, $\delta\in\mathfrak X(M_0)$ is a smooth vector field on $M_0$, and $\beta:M_0\to\R^+$ is a smooth positive function
on $M_0$. The field $Y=\partial_s$ tangent to the lines $\{x_0\}\times\R$, $x_0\in M_0$, is a timelike Killing vector
field in $(M,g)$; an immediate computation shows that
$g_{(x,s)}(Y,Y)=-\beta(x)$ for all $(x,s)\in M_0\times\R$. Locally, every stationary Lorentzian metric tensor has the form \eqref{eq:formastationary}.
When the vector field $\delta$ in \eqref{eq:formastationary} vanishes identically on $M_0$, then
the metric $\mathfrak g$ is said to be \emph{standard static}.

Let $\nabla$ be the Levi--Civita connection of the metric $\mathfrak g$ in $TM_0$; given a smooth map
$f_0:M_0\to\R$, denote by $\nabla f_0$ its gradient relatively to the metric $\mathfrak g$ and by
$\mathrm H^{f_0}(x):T_xM_0\to T_xM_0$, $x\in M_0$, the \emph{Hessian} of $f_0$ relatively to $\mathfrak g$ at the point
$x$, which is
the $\mathfrak g_x$-symmetric linear operator on $T_xM_0$ given by $\mathrm H^{f_0}(x)v=\nabla_v(\nabla f_0)$,
for all $v\in T_xM_0$. If $x$ is a critical point of $f_0$,
then $\mathfrak g_x\big(\mathrm H^{f_0}(x)v,w\big)=\mathrm d^2f_0(x)(v,w)$ is the standard second derivative
of $f_0$ at $x$.
A curve $\gamma(t)=\big(x(t),s(t)\big)$ in $M$ is a geodesic relatively to the metric \eqref{eq:formastationary} if and only if
its components $x$ and $s$ satisfy the system of differential equations:
\[\tfrac{\mathrm D}{\mathrm dt}\dot x+\tfrac{\mathrm D}{\mathrm dt}(\dot s\,\delta)-\dot s\,(\nabla\delta)^\star(\dot x)+\tfrac12\nabla\beta(x)\,\dot s^2=0,
\quad \frac{\mathrm d}{\mathrm dt}\big[\mathfrak g_x\big(\delta(x),\dot x\big)-\beta(x)\,\dot s\big]=0,\]
where $\tfrac{\mathrm D}{\mathrm dt}$ denotes covariant differentiation along $x$ relatively to the connection $\nabla$,
and $(\nabla\delta)^\star$ is the $(1,1)$-tensor on $M$ defined by $\mathfrak g\big((\nabla\delta)^\star(v),w\big)=\mathfrak g\big(\nabla_w\delta,w\big)$
for all $v,w\in TM$. As observed above, if $x_0$ is a critical point of $\beta$, i.e., $\nabla\beta(x_0)=0$, then
the curve $\gamma(t)=(x_0,t)$, $t\in[0,1]$, is a geodesic in $(M,g)$.

Let us consider for simplicity the static case, i.e., $\delta\equiv0$.
The second variation of the $g$-geodesic action functional
at a given geodesic $\gamma(t)=\big(x(t),s(t)\big)$, $t\in[0,1]$, is given by:
\begin{multline*}
I_{\mathfrak g,\beta}(\gamma)\big[(\xi,\sigma),(\bar\xi,\bar\sigma)\big]=
\int_0^1\Big[\,\mathfrak g\big(\Ddt\xi,\Ddt\bar\xi\big)
+\mathfrak g\big(R(\xi,\dot x)\bar\xi,\dot x\big)
-\bar\sigma'\,\dot s\,\mathfrak g\big(\nabla\beta(x),\xi\big)\\-
\sigma'\,\dot s\,\mathfrak g\big(\nabla\beta(x),\bar\xi\big)
-\tfrac12\dot s^2\,\mathfrak g\big(\mathrm H^\beta(x)\xi,\bar\xi\big)
-\beta(x)\sigma'\bar\sigma'\Big]\mathrm dt,
\end{multline*}
where $\xi$, $\bar\xi$ are variational vector fields along $x$ vanishing at the endpoints,
and $\sigma,\bar\sigma$ are smooth functions on $[0,1]$ vanishing at $0$ and at $1$.
In the above formula and in the rest of the section we will denote by a dot the derivatives
of the components $x$ and $s$ of the curve $\gamma$, and with a prime the derivatives of
the component $\sigma$ of the vector field $V=(\xi,\sigma)$ along $\gamma$.
A pair
$(\xi,\sigma)$ is a Jacobi field along the geodesic $\gamma=(x,s)$ if it satisfies the
second order linear system of differential equations:
\begin{equation}
\label{eq:statJacobi1}
\tfrac{\mathrm D^2}{\mathrm dt^2}\,\xi-R(\dot x,\xi)\,\dot x
+\sigma'\,\dot s\,\nabla\beta(x)+\tfrac12\dot s^2
\,\mathrm H^\beta(x)\xi=0,
\end{equation}
and
\begin{equation}\label{eq:statJacobi2}
\ddt\big[\mathfrak g\big(\dot s\,\mathfrak g\big(\nabla\beta(x),\xi\big)
+\beta(x)\,\sigma'\big]=0.
\end{equation}
In order to construct the required example, let us consider a geodesic of the form $\gamma(t)=(x_0,t)$, $t\in[0,1]$, where $x_0\in M_0$ is a critical point of $\beta$.
Equations \eqref{eq:statJacobi1} and \eqref{eq:statJacobi2} become:
\[\tfrac{\mathrm D^2}{\mathrm dt^2}\,\xi+\tfrac12\,\mathrm H^\beta(x_0)\xi=0,\quad\text{and}\quad
\sigma''=0.\]
Thus, if $V=(\xi,\sigma)$ is a Jacobi field along $\gamma$ that vanishes at $0$ and at $1$, then $\sigma\equiv0$, while
$\xi$ is a smooth curve in $T_{x_0}M_0$ satisfying the first of the two equations above.
Note that the covariant derivative $\Ddt\xi$ in this case equals the standard derivative $\xi'$.
Assume that this equation
has a non trivial solution $\xi$ satisfying $\xi(0)=\xi(1)=0$ and $\int_0^1\xi(t)\,\mathrm dt=0$.
For instance, one can take $M_0=\R$, $x_0=0$ and $\beta(x)=1+4\pi^2x^2$; then, $\tfrac12\beta''(0)=8\pi^2$,
and the differential equation $\xi''+4\pi^2\xi=0$ has the solution $\xi(t)=\sin(2\pi t)$ with the required properties.
Similar examples can be given easily in higher dimensions.

An \emph{infinitesimal variation} $h$ of $g$ \emph{in the class of stationary metrics} on $M$ of the type
\eqref{eq:formastationary} has the form:
\begin{equation}\label{eq:formastationaryinf}h_{(x,s)}\big((v,r),(\bar v,\bar r)\big)=\mathfrak h_x(v,\bar v)+\mathfrak g_x\big(\rho(x),v\big)\bar r+
\mathfrak g_x\big(\rho(x),\bar v\big)r+\zeta(x)r\bar r,\end{equation}
where $x\in M_0$, $s\in\R$, $v,\bar v\in T_xM_0$, $r,\bar r\in T_s\R\cong\R$, $\mathfrak h$ is a symmetric $(0,2)$-tensor
on $M_0$, $\rho\in\mathfrak X(M_0)$ is a smooth vector field on $M_0$, and $\zeta:M_0\to\R$ is a smooth  function
on $M_0$.
We claim that for every such $h$, the quantity $\int_0^1\big[h\big(\dot\gamma,\Ddt V)+\tfrac12\nabla h(V,\dot\gamma,\dot\gamma)\big]\,\mathrm dt$
vanishes. Namely,
\[h\big(\dot\gamma,\Ddt V\big)=\mathfrak g\big(\rho(x_0),\xi'\big),\]
and thus
\[\int_0^1h\big(\dot\gamma,\Ddt V\big)\,\mathrm dt=\mathfrak g\big(\rho(x_0),\xi(1)-\xi(0)\big)=0.\]
Moreover,
\[\nabla h(V,\dot\gamma,\dot\gamma)=\nabla_\xi\mathfrak h(\dot x,\dot x)+2\mathfrak g_0\big(\nabla_\xi\rho,\dot x)\dot s+\xi(\zeta)\dot s^2=
\xi(\zeta);\]
hence:
\[\int_0^1\nabla h(V,\dot\gamma,\dot\gamma)\,\mathrm dt=\int_0^1\xi(\zeta)\,\mathrm dt=\int_0^1\mathfrak g(\nabla\zeta,\xi)\,\mathrm dt=
\mathfrak g\Big(\nabla\zeta(x_0),\int_0^1\xi(t)\,\mathrm dt\Big)=0.\]
This proves our claim and gives the desired counterexample in the stationary case.
\end{section}
\begin{section}{Genericity in the $C^\infty$ category}\label{sec:smooth}
It is desirable to have a genericity result also in the space of $C^\infty$-metric
tensors, endowed with the Whitney weak $C^\infty$ topology (see for instance \cite{Hir}).
When the base manifold is non compact, the space of all symmetric tensors, endowed with the
topology of $C^\infty$ convergence on compact sets, is a only a \emph{Frechet} space, so that our Banach space approach does not apply directly.
However, as it was brought to the attention of the authors by the referee, there is
an elegant argument due to Taubes that allows to extend to the $C^\infty$ realm
our results. The same idea was used in \cite{FHS}, which is where the authors learned
about it; we will sketch here the argument adapted to our situation.

Consider a differentiable manifold $M$, a complete Riemannian metric $g_{\mathrm R}$ on $M$,
two distinct points $p,q\in M$, consider the sequence $\mathcal E^k=\mathbf\Gamma_{\mathrm{sym}}^k(TM^*\otimes TM^*;g_{\mathrm R})$
of $C^k$-Whitney type Banach space of tensor fields on $M$  described in Example~\ref{exa:exaWtBstf},
Subsection~\ref{sub:semiRiemannianmetrics}.
Note that the set of tensors of class $C^\infty$ having
compact support is dense in each $\mathcal E^k$. In particular, $\mathcal E^\infty=\bigcap_{k\ge k_0}\mathcal E^k$
is dense in every $\mathcal E^k$. We will think of $\mathcal E^\infty$ as a Frechet space endowed
with the family of seminorms $\Vert\cdot\Vert_k$ defined in \eqref{eq:defnormak}. In particular,
$\mathcal E^\infty$ is a \emph{Baire space}, i.e., the intersection of a countable family of dense
open subsets is dense.

Let $k_0\ge2$ be fixed, and let $\mathcal A$
be an open subset of $\mathcal E^{k_0}$ consisting of nondegenerate tensors, i.e., semi-Riemannian
metrics on $M$. For $k\ge k_0$, set $\mathcal A_k=\mathcal A\cap\mathcal E^k$; this is an open
subset of $\mathcal E^k$. Define $\mathcal A_*$ to be the subset of $\mathcal A$ consisting of all
metric tensors for which all geodesics connecting $p$ and $q$ are nondegenerate.
By assumption $\mathcal A_{k,*}=\mathcal A_*\cap\mathcal A_k$ is a generic subset of $\mathcal A_k$
for all $k\ge k_0$. Finally, define $\mathcal A_\infty=\mathcal A\cap\mathcal E^\infty=\bigcap_{k\ge k_0}\mathcal A_k\subset\mathcal E^\infty$, which is a dense
subset of $\mathcal A_k$ for all $k$,
and set $\mathcal A_{\infty,*}=\mathcal A_*\cap\mathcal A_\infty$.
Note that $\mathcal A_\infty$ is an open subset of $\mathcal E^\infty$, and thus it is also a Baire space;
convergence in $\mathcal A_\infty$ implies $C^\infty$-convergence on compact subsets of $M$.
We want to prove that $\mathcal A_{\infty,*}$
is generic in $\mathcal A_\infty$.
To this aim, denote by $\mathrm L_{\mathrm R}$  the length functional of curves relative to the Riemannian
metric $g_{\mathrm R}$;  for all $M>0$
define the following sets:
\begin{multline*}\mathcal A_{k,*,M}=\\\big\{g\in\mathcal A_k:\text{all $g$-geodesic $\gamma$ connecting $p$ and $q$,
with $\mathrm L_{\mathrm R}(\gamma)\le M$, are nondegerate}\big\},\end{multline*}
and
\[\mathcal A_{\infty,*,M}=\bigcap_{k\ge k_0}\mathcal A_{k,*,M}.\]
Clearly, $\mathcal A_{\infty,*}=\bigcap_{M=1}^\infty\mathcal A_{\infty,*,M}$, thus, to prove
our claim it suffices to show that $\mathcal A_{\infty,*,M}$ is open and dense in $\mathcal A_\infty$.
The key observation is that for all $k$ and $M$, $\mathcal A_{k,*,M}$ is open in $\mathcal A_k$.
This follows from the following argument. Assume that $g_n\in\mathcal A_k\setminus\mathcal A_{k,*,M}$
is a sequence converging to some $g_\infty\in\mathcal A_k$. Then, there exists a sequence $\gamma_n:[0,1]\to M$
of $g_n$-geodesics connecting $p$ and $q$, with $\mathrm L_{\mathrm R}(\gamma_n)\le M$ for all $n$,
and such that there is a non trivial $g_n$-Jacobi field $J_n$ along $\gamma_n$ with $J_n(0)=J_n(1)=0$
for all $n$. Each $J_n$ can be normalized so that
\begin{equation}\label{eq:normalization}
\left\Vert\frac{\mathrm D^{g_n}}{\mathrm dt}J_n(0)\right\Vert=1
\end{equation}
for all $n$; here $\frac{\mathrm D^{g_n}}{\mathrm dt}J_n$ is the covariant derivative of $J_n$ along $\gamma_n$
relatively to the Levi--Civita connection of $g_n$. Using the completeness of $g_{\mathrm R}$, by the theorem
of Arzel\'a and Ascoli, we can assume that the sequence $\gamma_n$ converges to a curve $\gamma_\infty$ connecting
$p$ and $q$; an immediate continuity argument shows that $\gamma_\infty$ is a $g_\infty$-geodesic with
$\mathrm L_{\mathrm R}(\gamma_\infty)\le M$. By \eqref{eq:normalization}, we can also assume that
the sequence $v_n=\frac{\mathrm D^{g_n}}{\mathrm dt}J(0)\in T_pM$ is convergent to some $v_\infty\ne0$.
By continuity, the $g_\infty$-Jacobi field $J_\infty$ along $\gamma_\infty$ satisfying $J_\infty(0)=0$
and $\frac{\mathrm D^{g_\infty}}{\mathrm dt}J_\infty(0)=v_\infty$ also satisfies $J_\infty(1)=0$, i.e.,
$\gamma_\infty$ is a degenerate $g_\infty$-geodesic connecting $p$ and $q$, and $g_\infty\not\in\mathcal A_{k,*,M}$.
This shows that $\mathcal A_{k,*,M}$ is open in $\mathcal A_k$  for every $M>0$ and $k\in\N\cup\{+\infty\}$.
Moreover, since $\mathcal A_{k,*,M}$ contains $\mathcal A_{k,*}$, then $\mathcal A_{k,*,M}$ is also dense
in $\mathcal A_k$ for all $M$. Finally, since $\mathcal A_\infty$ is dense in $\mathcal A_k$ and
$\mathcal A_{k,*,M}$ is open and dense in $\mathcal A_k$, then $\mathcal A_\infty\cap\mathcal A_{k,*,M}=\mathcal A_{\infty,*,M}$
is dense in $\mathcal A_k$ for all $k$, and thus $\mathcal A_{\infty,*,M}$ is dense in $\mathcal A_\infty$.
This proves the genericity result in the $C^\infty$-category. Analogous results hold in all the cases discussed
in Section~\ref{sec:morsegeofunctionals}.
\end{section}
\begin{section}{A few final remarks and open problems}
\label{sec:remarks}
Let us conclude with a few observations.

First, one should observe that the genericity result for globally hyperbolic Lorentzian
manifolds stated in Subsection~\ref{sub:globhyper} is far from being conclusive, or exhaustive.
Note  for instance that Proposition~\ref{thm:genglobhyperb} does not apply to sets containing
metric tensors $g^{\alpha,\beta}$ with $\beta$ an unbounded function on $M$.
Several different statements of the genericity result are possible by the very same
argument, simply by selecting the appropriate set of tensors and its Banach space structure
that one wants to consider. It should also be mentioned that somewhat stronger genericity
results may be obtained by relaxing the global hyperbolicity condition given in
\eqref{eq:garantisceglobhyerb}, in that the inequality may be required to hold in
smaller regions of the spacetime. For instance, in \cite{AbbMej} it is given a condition
on the first derivative of the metric coefficients $\alpha$ and $\beta$ implying that
all the geodesics between the prescribed points $p$ and $q$ remain in a time-limited region
of the spacetime.
However, such stronger results would
certainly have a more involved statement, filled with technicalities that are probably not
appropriate for the purposes of the present paper. The interested reader will have no problem
in adapting the arguments in the proof of Proposition~\ref{thm:genglobhyperb} to other specific
cases.

As to the stationary Lorentzian case (Subsection~\ref{sec:stationary}), the \emph{negative} result
given by the counterexample exhibited opens several interesting questions and conjectures that
deserve further attention. First, it is natural to conjecture that, apart from vertical geodesics,
stationary infinitesimal perturbations of the metric would suffice to destroy degeneracies.
Should this be the case, than a genericity result may be obtained by considering points $p$ and
$q$ that do not belong to the same integral line of the Killing vector field. A proof
for the existence of appropriate infinitesimal perturbations would have to based on the following
conjecture: given a non vertical geodesic $\gamma=(x,s)$ and a nontrivial Jacobi field
$J=(\xi,\tau)$ along $\gamma$ vanishing at the endpoints, then at some instants $t$, the vector $\xi(t)$ is
not parallel to $\dot x(t)$. A direct proof of this fact, based on the Jacobi differential equations
\eqref{eq:statJacobi1} and \eqref{eq:statJacobi2}, seems to be rather involved, so that a suitable
version of Lemma~\ref{thm:Jnontparalleltogamma} would have to be proven.
Another interesting point would be to determine the genericity of the nondegeneracy property
in the stationary Lorentzian case if one allows that also the Killing
vector field $Y$ may be perturbed. We conjecture that the genericity property in this case
would hold under no restrictions on the endpoint.

Finally, we would like to mention the case of closed geodesics, which is substantially
more involved than the fixed endpoint case. Let us recall that the first statement
of the Riemannian bumpy theorem is due to Abraham, see \cite{Abr}, but to the authors'
knowledge the first complete proof of it is due to Anosov, see \cite{Ano}.
A very interesting observation is
that a similar result does not hold for a general conservative Hamiltonian system, where one can have degenerate
periodic orbits that are not destroyed by small perturbations, as shown in \cite{MeyPal}.
Significative improvements of the bumpy metric theorem
have been proven later by Klingenberg and Takens \cite{KliTak}, who have shown genericity of the set
of metrics with the property that the Poincar\'e map of every closed geodesic and all its derivatives up to
a finite order belong to a prescribed open and dense subset of the space of jets of symplectic maps
around a fixed point.

As we have observed, the theory developed in this paper does not work in order
to prove a genericity result for closed geodesics: iterates cannot
be dealt with the perturbation arguments discussed. Although parts of Anosov's proof
of the bumpy metric theorem in \cite{Ano} can be carried over to the semi-Riemannian
case (namely, all the properties depending on the linearized Poincar\'e map), the positive
definite character of Anosov's argument in some parts of the proof cannot be extended
directly to the semi-Riemannian case. For instance, it is used in \cite{Ano} a certain
lower bound on the length of closed geodesics for all Riemannian metrics in a neighborhood of a given
one; such bound certainly does not exist outside the Riemannian realm. A natural conjecture,
or more exactly a wishful thinking at this stage, is that bumpy metrics may be generic in
sets of Lorentzian metrics satisfying restrictive causality and geometric assumptions.
A natural guess would be starting with the stationary and globally hyperbolic case, where
all closed geodesics are spacelike, and recent developments of the variational geodesic
theory (refs.\ \cite{BilMerPic, CanFloSan}) indicate a certain Riemannian behavior of the geodesic flow.
\end{section}

\end{document}